\newcites{suppl}{References}
\def\spacingset#1{\renewcommand{\baselinestretch}%
{#1}\small\normalsize} \spacingset{1}
\newcommand{\blind}{0}
\numberwithin{equation}{section}
\newtheoremstyle{dotless}{}{}{\itshape}{}{\bfseries}{}{ }{}
\theoremstyle{dotless}
\newtheorem{theorem}{Theorem}[section]
\newtheorem{corollary}[theorem]{Corollary}
\newtheorem{assump}[theorem]{Assumption}
\newtheorem{lemma}[theorem]{Lemma}
\newtheorem{Proposition}[theorem]{Proposition}
\newtheoremstyle{definition}{}{}{}{}{\bfseries}{}{ }{}
\theoremstyle{definition}
\newtheorem{remark}[theorem]{Remark}
\newtheorem{algorithm}[theorem]{Algorithm}
\newtheorem{example}[theorem]{Example}
\newcommand{\Pb}{\mathbb{P}}
\newcommand{\E}{\mathbb{E}}
\newcommand{\R}{\mathbb{R}}
\newcommand{\U}{\mathbb{U}}
\newcommand{\V}{\mathbb{V}}
\newcommand{\Z}{\mathbb{Z}}
\newcommand{\N}{\mathbb{N}}
\newcommand{\IF}{\mathcal{IF}}
\newcommand{\floor}[1]{\lfloor #1 \rfloor}
\newcommand{\convd}{\overset{\mathcal{D}}{\Longrightarrow}}
\newcommand{\eqd}{\overset{\mathcal{D}}{=}}
\newcommand{\convp}{\overset{\mathbb{P}}{\Longrightarrow}}
\newcommand{\limm}{\lim_{m\to\infty}}
\newcommand{\hattheta}{\hat{\theta}}
\newcommand{\hatF}{\hat{F}}
\newcommand{\hatD}{\hat{D}}
\newcommand{\hatDSN}{\hat{D}^{\rm SN}}
\newcommand{\hatmu}{\hat{\mu}}
\newcommand{\op}{o_{\mathbb{P}}}
\newcommand{\hatFij}{\hat{F}_i^j}
\newcommand{\hatqijbeta}{[\hat{q_{\beta}}]_i^j}
\DeclareMathOperator*{\argmax}{\arg\!\max}
\DeclareMathOperator{\Cov}{Cov}
\DeclareMathOperator{\Cor}{Cor}
\DeclareMathOperator{\Var}{Var}
\DeclareMathOperator{\vech}{vech}
\newcommand{\tabj}{\newline\newline\newline}
\newcommand{\rev}[1]{\textcolor{red}{#1}}
\begin{document}

\title{A likelihood ratio approach to sequential change point detection for a general class of parameters}
\if0\blind
{
\author{ {\small Holger Dette, Josua G\"osmann}\\
 {\small Ruhr-Universit\"at Bochum }\\
 {\small Fakult\"at f\"ur Mathematik} \\
 {\small44780 Bochum} \\
 {\small Germany }
}
} \fi
\maketitle

\begin{abstract}
In this paper we propose a new approach for sequential monitoring of a general class of parameters of a $d$-dimensional time series, which can be estimated by approximately linear functionals of the empirical distribution function.
We consider a closed-end-method, which is motivated by the likelihood ratio test principle and compare the new method with two alternative procedures.
We also incorporate self-normalization such that estimation of the long-run variance is not necessary.
We prove that for a large class of testing problems the new detection scheme has asymptotic level $\alpha$ and is consistent.
The asymptotic theory is illustrated for the important  cases of monitoring a change in the mean, variance and correlation.
By means of a simulation study it is demonstrated that the new test performs better than the currently available procedures for these problems.
Finally the methodology is illustrated by a small data example investigating index prices from the dot-com bubble.
\end{abstract}

AMS subject classification: 62M10, 62G10, 62G20, 62L99

Keywords and phrases: change point analysis, self-normalization, sequential monitoring, likelihood ratio principle

\newpage
\spacingset{1.45}
\section{Introduction}

An important problem in statistical modeling of time series is the problem of testing for structural stability as changes in the data generating process may have a substantial impact on statistical inference developed under the assumption of stationarity.
Because of its importance there exists a large amount of literature, which develops tests for structural breaks in various models and we refer to \cite{Aue2013} and \cite{Jandhyala2013} for more recent reviews of the literature.
There are essentially two ways how the problem of change point analysis is addressed.
A large portion of the literature discusses a-posteriori change point analysis, where the focus is on the detection of structural breaks given a historical data set [see \cite{Davis1995}, \cite{Csorgo1997}, \cite{Aue2009}, \cite{Jirak2015} among many others].
On the other hand, in many applications, such as engineering, medicine or risk management data arrives steadily and therefore several authors have addressed the problem of sequentially monitoring changes in a parameter.
\cite{Page1954}, \cite{Hinkley1971}, \cite{Moustakides1986}, \cite{Nikiforov1987},  and \cite{Lai1995} among others developed detection schemes for models with an infinite time horizon.
These methods always stop and have been partly summarized by the term 'statistical process control' (SPC).
SPC-statistics, for example CUSUM- or Shewart-Controlcharts, have then been traditionally compared by the average run length (ARL).
Our approach differs from this literature substantially as we work under the paradigm introduced by \cite{Chu1996}.
These authors proposed an alternative monitoring ansatz in the context of testing the structural stability of the parameters in a linear model, which on the one hand allows to control (asymptotically) the type I error (if no changes occur) and on the other hand provides the possibility of power analysis. \cite{Horvath2004}, \cite{Fremdt2014} extended this approach for linear models with infinite time horizon, while \cite{Aue2012}, \cite{Wied2013}, \cite{Pape2016} developed monitoring procedures for changes in a capital asset pricing model, correlation and variance under the assumption of a finite time horizon.

In this paper we propose a general alternative sequential test in this context, which is applicable for change point analysis of a $p$-dimensional parameter of a $d$-dimensional time series if a historical data set from a stable phase is available and then data arrives consecutively.
Our approach differs from the current methodology as it is motivated by a likelihood ratio test for a structural change.
To be precise, \cite{Wied2013} proposed to compare estimates from the historical data set, say $X_1, \ldots , X_m$, with estimators from the sequentially observed data $X_{m+1}, \ldots , X_{m+k}$ [see also \cite{Chu1996,Horvath2004,Aue2012,Pape2016}], while \cite{Fremdt2014} and \cite{Kirch2018} suggested to compare the estimate from the historical data set with estimates from the sequentially observed data $X_{m+j+1}, \ldots , X_{m+k}$ (for all $j=0,\ldots , k-1$).
In contrast, motivated by the likelihood ratio principle, our approach sequentially compares the estimates from the samples $X_1, \ldots , X_{m+j}$  and   $X_{m+j+1}, \ldots , X_{m+k}$ (for all $j=0,\ldots , k-1$).

Moreover, we also propose a self-normalized test, which avoids the problem of estimating the long-run variance.
While the concept of self-normalization has been studied intensively for a-posteriori change point analysis [see \cite{Shao2010}, \cite{shao2015} and \cite{Zhang2018} among many others], to our best knowledge self-normalization has not been studied in the context of sequential monitoring.

The statistical model and the  change point problem for a general parameter of the marginal
distribution are introduced in Section \ref{sec2}, where we also provide the motivation for the statistic used in the sequential scheme (see Example \ref{ex1}) and a discussion of the alternative methods.
The asymptotic properties of the new monitoring scheme are investigated in Section \ref{sec3}.
In particular we prove a result, which allows to control (asymptotically) the probability of indicating a change in the parameter although there is in fact structural stability (type I error).
Moreover, we also show that the new test is consistent and investigate the concept of self-normalization in this context.
These asymptotic considerations require several assumptions, which are stated in the general context and verified in Section \ref{sec4} for the case of monitoring changes in the mean and variance matrix.
In Section \ref{sec5} the finite sample properties of the new procedure are investigated by means of a simulation study, and we also demonstrate the (empirical) superiority of our approach. 
Here we also illustrate our approach by a small real data example investigating the log-returns of NASDAQ and S\&P 500 during the dot-com bubble.
Finally, Section \ref{sec:concout} gives a brief conclusion and an outlook defining subjects for future research.
All proofs are deferred to an online supplement.

\section{Sequential change point testing}\label{sec2}

Consider a $d$-dimensional time series $\{X_t\}_{t\in \mathbb{Z}}$, where $X_t$ has distribution function $F_t$, and denote by $\theta_t = \theta(F_t)$ a $p$-dimensional parameter of interest of the distribution of $F_t$.
We are taking the sequential point of view and assume that there exists a historical period of length, say $m \in \N$, such that the process is stable in the sense
\begin{align}\label{eq:stability}
\theta_1 = \theta_2 = \dots = \theta_m~.
\end{align}
We are interested to monitor if the parameter $\theta_{m+k}$ changes in the future $m+k \geq m+1$.
The sequence $X_1,\dots ,X_{m}$ is usually referred to as a historical or initial training data set, see for example \cite{Chu1996}, \cite{Horvath2004}, \cite{Wied2013} or \cite{Kirch2018}, among many others.
Based on this stretch of ``stable'' observations a sequential procedure should be conducted to test the hypotheses
\begin{align}\label{globalhypo0}
H_0&:\; \theta_1 = \dots = \theta_m = \theta_{m+1} =\theta_{m+2} = \ldots ~,
\end{align}
against the alternative that the parameter $\theta_{m+k^{\star}}$ changes for some $ k^{\star} \geq 1$, that is
\begin{align}\label{globalhypo1}
 H_1&:\; \exists k^{\star} \in \N:\;\;
\theta_1 = \dots =\theta_{m+k^{\star}-1} \neq \theta_{m+k^{\star}} = \theta_{m+k^{\star}+1} = \ldots ~,
\end{align}
In order to motivate our approach in particular the test statistic used in the detection scheme, which will be used in the proposed sequential test, we begin with a very simple example of a change in the mean.

\begin{example} \label{ex1}
Consider a sequence $\{ X_t\}_{t\in \Z}$ of independent, $d$-dimensional normal distributed random variables with (positive definite) variance matrix $\Sigma$ and mean vectors
\begin{align} \label{meanvec}
\mu_t =\E[X_t] =  \theta (F_t)  = \int_{\R^d} x  dF_t(x)  \in \R^d ~,~~j=1,2, \ldots .
\end{align}
During the monitoring procedure, we propose to successively test the hypotheses
\begin{align}\label{singlehypo:mean}
\begin{split}
H_0&:\; \mu_1 = \dots = \mu_m = \mu_{m+1} = \dots =\mu_{m+k} ~,\\
\mbox{ versus } \quad H_A^{(k)}&: \exists\; j \in \{ 0,\dots, k-1 \}: \;\; \mu_1 = \dots = \mu_{m+j}  \neq \mu_{m+j+1} =\dots =\mu_{m+k}
\end{split}
\end{align}
based on the sample $X_1 , \ldots , X_{m+k}$.
Under the assumptions made in this example we can easily derive the likelihood ratio
\begin{align*}
\Lambda_m(k)
&= \dfrac{ \sup\limits_{\mu \in \R^d}  \prod\limits_{t=1}^{m+k}f(X_t, \mu)}
{ \sup\limits_{\substack{j \in \{0,\dots,k-1\} \\ \mu^{(1)}, \mu^{(2)} \in \R^d}}
\prod\limits_{t=1}^{m+j}f(X_t, \mu^{(1)}) \cdot \prod\limits_{t=m+j+1}^{m+k}f(X_t, \mu^{(2)})}~,
\end{align*}
where $f ( \cdot , \mu ) $ denotes the density of a  normal distribution with mean $\mu$ and variance matrix $\Sigma$ (note that the first $m$ observations are assumed to be mean-stable).
A careful calculation now proves the identity
\begin{align}\label{eq:loglikelihood}
-2 \log \big( \Lambda_m(k) \big)
& = \max_{j=0}^{k-1} \dfrac{(m+j)(k - j)}{m+k}\big(\hatmu_1^{m+j} - \hatmu_{m+j+1}^{m+k}\big)^\top
\Sigma^{-1}\big(\hatmu_1^{m+j} - \hatmu_{m+j+1}^{m+k}\big) \\
& = \max_{j=0}^{k-1}\dfrac{(m+k)(m+j)}{(k - j)}
\big(\hatmu_1^{m+j} - \hatmu_{1}^{m+k}\big)^\top
\Sigma^{-1}\big(\hatmu_1^{m+j} - \hatmu_{1}^{m+k}\big)~,\nonumber
\end{align}
where $v^\top$ denotes the transposed of the vector $v$ (usually considered as a column vector) and
\begin{align*}
\hat{\mu}_i^j
= \frac{1}{j-i+1}\sum_{t=i}^{j} X_t
\end{align*}
is the mean of the observations $X_i,\ldots,X_j$.
Consequently the null hypothesis  $H_0^{(k)}$ should be rejected in favor of the alternative $H_A^{(k)}$ for large values of the statistic
\begin{align} \label{testa}
\max_{j=0}^{k-1} (m+j)(k - j)\big(\hatmu_1^{m+j} - \hatmu_{m+j+1}^{m+k}\big)^\top
\Sigma^{-1}\big(\hatmu_1^{m+j} - \hatmu_{m+j+1}^{m+k}\big)~.
\end{align}
However, as pointed out in \cite{Csorgo1997}, the asymptotic properties of a likelihood ratio type statistic of the type \eqref{testa} are difficult to study.
For this reason we propose to use a weighted version of \eqref{testa} and consider the statistic
\begin{align}\label{testb}
&\max_{j=0}^{k-1} (m + j)^2(k - j)^2\big(\hatmu_1^{m+j} - \hatmu_{m+j+1}^{m+k}\big)^\top
\Sigma^{-1}\big(\hatmu_1^{m+j} - \hatmu_{m+j+1}^{m+k}\big)\\
= & \max_{j=0}^{k-1} \Big( (k-j) \sum_{t=1}^{m+j} X_t  - (m+j) \sum_{t=m+j+1}^{m+k} X_t \Big)
\Sigma^{-1}
\Big( (k-j) \sum_{t=1}^{m+j} X_t  - (m+j) \sum_{t=m+j+1}^{m+k} X_t \Big)~,
\nonumber
\end{align}
for which (after appropriate normalization) weak convergence of a corresponding sequential empirical process can be established.
Note that the right-hand side in \eqref{testb} corresponds to the well known CUSUM statistic, which has become a standard tool for change point detection in a retrospective setting.
\end{example}

\medskip
\noindent
Motivated by the previous example we propose to use the statistic
\begin{align}\label{genstat}
\hatD_m(k)
= m^{-3} \max_{j=0}^{k-1}  \,
(m+j)^2(k - j)^2
\big( \hattheta_{1}^{m+j}  - \hattheta_{m+j+1}^{m+k}\big)^\top
\hat \Sigma_m^{-1}
\big( \hattheta_{1}^{m+j} - \hattheta_{m+j+1}^{m+k} \big) ~,
\end{align}
for monitoring changes in the parameter $\theta_j$, where $\hattheta_i^j = \theta(\hatF_i^j)$ denotes the estimator obtained from the empirical distribution function
\begin{align}\label{eq:Fij}
\hatF_i^j(z)
= \dfrac{1}{j-i+1}\sum_{t=i}^j I\{X_t \leq z\}~
\end{align}
of  the  observations $X_i,\dots,X_j$ and the matrix $\hat \Sigma_m $ corresponds to an estimator of a long-run variance based on $X_1,\dots,X_m$.
The scaling by $m^{-3}$ will be necessary to obtain weak convergence in the sequel.

We use the sequence $\{\hat{D}_m(k)\}_{k \in \mathbb{N}}$ in combination with an increasing threshold function $w(\cdot)$ as a monitoring scheme.
More precisely,  let $T\in \R_+$ denote a constant factor (with $Tm \in \N$) defining the window of monitoring, then we reject the null hypothesis in \eqref{globalhypo0} at the first time $k \in \{1,2,\ldots , Tm\}$ for which the detector $\hat{D}_m(k)$ exceeds the threshold function $w: [0,T] \to \R_{+} $, that is
\begin{align}\label{ineq:reject}
\hat{D}_m(k) > w(k/m)~.
\end{align}
This definition yields a stopping rule defined by
\begin{align*}
\tau_m
= \inf \Big\{ 1 \leq k \leq{Tm}\;|\; \hat{D}_m(k) > w(k/m) \Big\}~,
\end{align*}
(if the set $\{ 1 \leq k \leq{Tm}\;|\; \hat{D}_m(k) > w(k/m) \} $ is empty we define $\tau_m =\infty$).
The threshold function has to be chosen such that the test has asymptotic level $\alpha$, that is
\begin{align}\label{stmt:levelalpha}
\limsup_{m \to \infty}\; \Pb_{H_0} \big( \tau_m < \infty \big)
&= \limsup_{m \to \infty}\; \Pb_{H_0} \bigg( \max_{k=1}^{mT}
\dfrac{\hat{D}_m(k)}{w(k/m)} > 1 \bigg)
\leq \alpha~,
\end{align}
and is consistent, i.e.
\begin{align}\label{stmt:power}
\lim_{m \to \infty}\; \Pb_{H_1} \big( \tau_m < \infty \big)
= 1~.
\end{align}
Following \cite{Aue2012} we call this procedure a closed-end method, because monitoring is only performed in the interval $[m+1, mT]$.

To our best knowledge, the detection scheme defined by \eqref{ineq:reject} has not been considered in the literature so far.
However, our approach is related to the work of \cite{Chu1996,Horvath2004,Aue2012,Wied2013} and \cite{Pape2016}, who investigated sequential monitoring schemes for various parameters (such as the correlation, the variance or the parameters of the capital asset pricing model).
In the general situation considered in this section their approach uses
\begin{align}\label{scheme:wied}
\hat Q_m(k)
= \dfrac{k^2}{m}\big (\hattheta_1^{m} - \hattheta_{m+1}^{m+k}  \big )
^\top  \hat \Sigma_m^{-1}
(\hattheta_1^{m} - \hattheta_{m+1}^{m+k}  \big )
\end{align}
as a basic statistic in the sequential procedure.
Note that a sequential scheme based on this statistic measures the differences between the estimator $\hattheta_1^{m} $ from the initial data and the estimator $\hattheta_{m+1}^{m+k}$ from all observations excluding the training sample.
As a consequence - in particular in the case of a rather late change - the estimator $\hattheta_{m+1}^{m+k}$ may be corrupted by observations before the change point, which might lead to a loss of power.
Another related procedure uses the statistic
\begin{align}\label{scheme:kirch}
\hat P_m(k)
= \max_{j=0}^{k-1} \dfrac{(k-j)^2}{m} \big ( \hattheta_1^{m} - \hattheta_{m+j+1}^{m+k} \big )  ^\top  \hat \Sigma_m^{-1} \big ( \hattheta_1^{m} - \hattheta_{m+j+1}^{m+k} \big)
\end{align}
and was recently suggested by \cite{Fremdt2014} and reconsidered by \cite{Kirch2018}.
These authors compare the estimate from the data $X_1,\ldots , X_m$ with estimates from the data $X_{m+j+1},\ldots , X_{m+k}$ (for different values of $j$). This method may lead to a loss in power in problems with a small sample of historical data and a rather late change point.
In contrast our approach compares the estimates of the parameters before and after all potential positions of a change point $ j \in \{m+1, \ldots , m+k \}$.\\
In this paper, we argue that the performance of the change point tests can be improved by replacing $\hattheta_1^{m}$ by $\hattheta_1^{m+j}$ inside the maximum, which would directly lead to a scheme of the form \eqref{genstat}.
Here, we would already like to point to our simulation study in Section \ref{sec5}, which contains many cases where a sequential detection scheme based on the statistic $\hat D_m$ outperforms schemes based on $\hat Q_m $ or $\hat P_m $.

\section{Asymptotic properties} \label{sec3}

In the subsequent discussion we use the following notation.
We denote  by $\ell^{\infty}(V_1,V_2)$ the space of all bounded functions $f: V_1 \to V_2$ equipped with sup-norm, where  $V_1, V_2$ are normed linear spaces.
The symbols $\convp$ and $\convd$ mean convergence in probability and weak convergence (in the space under consideration), respectively.
The process $\{W(s)\}_{s \in [0,T+1]}$ will usually represent a standard $p$-dimensional Brownian motion.
For a vector $v \in \R^d $, we denote by $|v| = \big ({\sum_{i=1}^d v_{i}^2} \big) ^{1/2}$ its euclidean norm.
\subsection{Weak convergence}
\label{sec3a}

Throughout this paper we denote by $\theta = \theta(F)$ a $p$-dimensional functional of the $d$-dimensional distribution function $F$ and define its influence function (assuming its existence) by
\begin{align}\label{def:inflfunc}
\IF(x,F,\theta)
= \lim_{\varepsilon \searrow 0} \dfrac{\theta((1-\varepsilon)F + \varepsilon\delta_x) - \theta(F)}{\varepsilon}~,
\end{align}
where $\delta_x(z) = I\{x \leq z\}$ is the distribution function of the Dirac measure at the point $x \in \R^d$ and the inequality in the indicator is understood component-wise.
Throughout this section we make the following assumptions, which will be verified for several important examples in Section \ref{sec4} [see also \cite{Shao2010} for similar regularity conditions].

\begin{assump}\label{assump:meta-1}
Under the null hypothesis \eqref{globalhypo0} we assume that the times series $\{X_t\}_{t\in \mathbb{N}}$ is strictly stationary with $\E[\IF(X_1,F,\theta)] = 0$ and that the weak convergence
\begin{align}
\dfrac{1}{\sqrt{m}} \sum_{t=1}^{\floor{ms}} \IF(X_t,F,\theta)
\convd \sqrt{\Sigma_F} W(s)~,
\end{align}
holds in the space $\ell^{\infty}([0,T+1],\R^p)$ as $m \to \infty$, where the long-run variance matrix is defined by (assuming convergence of the series)
\begin{align}\label{eq:meta-covar}
 \Sigma_F
= \sum_{t \in \mathbb{Z}}
\Cov\big(\IF(X_0, F, \theta),
~\IF(X_t, F, \theta) \big)
 \in \R^{p\times p}
\end{align}
and $\{W(s)\}_{s \in [0,T+1]}$ is a $p$-dimensional (standard) Brownian motion.
\end{assump}

\medskip
\begin{assump}\label{assump:meta-2}
The remainder terms
\begin{align}\label{eq:meta-remainder}
R_{i,j}= \hattheta_i^j - \theta(F) - \dfrac{1}{j-i+1} \sum_{t=i}^j \IF(X_t, F, \theta)
\end{align}
in the linearization of $\hattheta_i^j - \theta(F) $ satisfy
\begin{align}\label{eq:meta-remainderrate}
\sup_{1\leq i < j \leq n} (j-i+1) |R_{i,j}| = \op(n^{1/2})~.
\end{align}
\end{assump}

\medskip
\noindent
For the statement of our first result we introduce the notations (throughout this paper we use the convention $\hattheta_z^u = 0 $, whenever $z > u$)
\begin{align} \label{utilde}
\tilde{\U}(\ell,z,u)
& := (u-z)(z-\ell)\big( \hattheta_{\ell+1}^z - \hattheta_{z+1}^u \big)~,
\\
\label{eq:meta-cusum}
\U(z,u) &: = \tilde{\U}(0,z,u) =
(u-z)z\big( \hattheta_{1}^z - \hattheta_{z+1}^u \big) ~,
\end{align}
 and denote by
\begin{align}\label{eq:twoset}
\Delta_2 &= \{ (s,t) \in [0,T+1]^2\;\vert\; s \leq t\}~,
\\
\Delta_3 &= \big \{ (r,s,t) \in [0,T+1]^3 ~|~ r \leq s \leq t \big \}.
\label{eq:triset}
\end{align}
the $2$-dimensional triangle and the $3$-dimensional oblique pyramid in $[0,T+1]^2 $ and\\ $[0,T+1]^3$, respectively.

\medskip

\begin{theorem}\label{thm:meta-main}
Let Assumptions \ref{assump:meta-1} and \ref{assump:meta-2} be satisfied.
If the null hypothesis in \eqref{globalhypo0} holds, then as $m \to \infty$
\begin{align}\label{conv:kern}
\big \{
m^{-3/2}  \tilde \U(\floor{mr}, \floor{ms}, \floor{mt}) \big \}_{(r,s,t) \in \Delta_3}
&\convd \Sigma^{1/2}_F \big \{   B(s,t) + B(r,s) -B(r,t)  \big \}_{(r,s,t) \in \Delta_3}
\end{align}
in the space $\ell^{\infty}(\Delta_3, \R^p)$,
where the process  $\{B(s,t)\}_{(s,t) \in \Delta_2}$ is defined by
\begin{align}\label{eq:Bst}
B(s,t) = tW(s) - sW(t) ~,~~ (s,t) \in \Delta_2,
\end{align}
and $\{ W (s) \}_{s\in [0,T+1]}$ denotes a $p$-dimensional Brownian motion on the interval $[0,T+1]$.
\end{theorem}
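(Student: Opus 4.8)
The plan is to reduce the statement to the functional central limit theorem in Assumption~\ref{assump:meta-1} by combining the linearisation of Assumption~\ref{assump:meta-2} with a continuous mapping argument. Write $S_0=0$, $S_n=\sum_{t=1}^{n}\IF(X_t,F,\theta)$ for $n\geq1$, and $\bar S_m(x)=m^{-1/2}S_{\floor{mx}}$, so that Assumption~\ref{assump:meta-1} reads $\bar S_m\convd \Sigma_F^{1/2}W$ in $\ell^{\infty}([0,T+1],\R^p)$. The first step is to insert the linearisation \eqref{eq:meta-remainder} into the definition \eqref{utilde} of $\tilde{\U}$: for indices $\ell<z<u$ one has $\hattheta_{\ell+1}^{z}-\hattheta_{z+1}^{u}=\tfrac{S_z-S_\ell}{z-\ell}-\tfrac{S_u-S_z}{u-z}+R_{\ell+1,z}-R_{z+1,u}$, and multiplying through by $(u-z)(z-\ell)$ turns this into the exact identity
\begin{align*}
\tilde{\U}(\ell,z,u)=(u-\ell)S_z-(u-z)S_\ell-(z-\ell)S_u+(u-z)(z-\ell)\big(R_{\ell+1,z}-R_{z+1,u}\big).
\end{align*}
For degenerate indices ($z=\ell$ or $z=u$) both sides vanish because of the convention $\hattheta_{z}^{u}=0$ for $z>u$, so, writing $(\ell,z,u)=(\floor{mr},\floor{ms},\floor{mt})$, this identity is valid on all of $\Delta_3$.

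The reason for keeping $\tilde{\U}$ in this ``multiplied-through'' form is that after dividing by $m^{3/2}$ no small denominators remain, and the last term is controlled purely by the uniform rate in Assumption~\ref{assump:meta-2}. Indeed $0\leq u-z\leq m(T+1)$ and $0\leq z-\ell\leq m(T+1)$, while $R_{\ell+1,z}$ and $R_{z+1,u}$ are remainders built from $z-\ell$ and $u-z$ observations, so by Assumption~\ref{assump:meta-2} (applied with $n=\lceil m(T+1)\rceil$) both $(z-\ell)|R_{\ell+1,z}|$ and $(u-z)|R_{z+1,u}|$ are bounded by $\sup_{1\leq i<j\leq n}(j-i+1)|R_{i,j}|=\op(m^{1/2})$. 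Hence
\begin{align*}
\sup_{(r,s,t)\in\Delta_3} m^{-3/2}(u-z)(z-\ell)\big(|R_{\ell+1,z}|+|R_{z+1,u}|\big)\leq (T+1)\,m^{-1/2}\op(m^{1/2})=\op(1).
\end{align*}

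For the linear part I would write $m^{-3/2}\{(u-\ell)S_z-(u-z)S_\ell-(z-\ell)S_u\}=a_m(r,t)\bar S_m(s)-b_m(s,t)\bar S_m(r)-c_m(r,s)\bar S_m(t)$, where $a_m(r,t)=(\floor{mt}-\floor{mr})/m$, $b_m(s,t)=(\floor{mt}-\floor{ms})/m$ and $c_m(r,s)=(\floor{ms}-\floor{mr})/m$. Since $|a_m(r,t)-(t-r)|\leq 1/m$ uniformly (and likewise for $b_m,c_m$), and $\|\bar S_m\|_\infty=O_{\Pb}(1)$ because $\bar S_m$ converges weakly, this linear part equals $\Phi(\bar S_m)(r,s,t)+\op(1)$ uniformly on $\Delta_3$, where $\Phi:\ell^{\infty}([0,T+1],\R^p)\to\ell^{\infty}(\Delta_3,\R^p)$ is the bounded linear map $\Phi(g)(r,s,t)=(t-r)g(s)-(t-s)g(r)-(s-r)g(t)$ (note $\|\Phi(g)\|_\infty\leq 3(T+1)\|g\|_\infty$).

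To finish, I would apply the continuous mapping theorem: $\Phi$ is continuous, so Assumption~\ref{assump:meta-1} gives $\Phi(\bar S_m)\convd\Phi(\Sigma_F^{1/2}W)$ in $\ell^{\infty}(\Delta_3,\R^p)$, and a one-line rearrangement shows $\Phi(\Sigma_F^{1/2}W)(r,s,t)=\Sigma_F^{1/2}\{(t-r)W(s)-(t-s)W(r)-(s-r)W(t)\}=\Sigma_F^{1/2}\{B(s,t)+B(r,s)-B(r,t)\}$. Combining the decomposition of $m^{-3/2}\tilde{\U}$, the two $\op(1)$ bounds above, and Slutsky's lemma in $\ell^{\infty}(\Delta_3,\R^p)$ then yields \eqref{conv:kern}. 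The only genuine obstacle is the uniform control of the linearisation remainder over the three-dimensional set $\Delta_3$; this is exactly what the uniform rate in Assumption~\ref{assump:meta-2}, together with the multiplied-through (rather than ratio) form of $\tilde{\U}$, is designed to deliver, and once it is in hand the rest is a routine continuous-mapping and Slutsky chain.
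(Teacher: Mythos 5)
Your proposal is correct and follows essentially the same route as the paper's proof: insert the linearisation of Assumption \ref{assump:meta-2} into $\tilde{\U}$, show the remainder contribution is uniformly $\op(1)$ on $\Delta_3$ via the rate $\op(n^{1/2})$, and obtain the limit of the linear part from Assumption \ref{assump:meta-1} together with the uniform $O(1/m)$ approximation of the floor-based coefficients, finishing with continuous mapping and Slutsky. Your explicit map $\Phi$ merely makes the paper's implicit continuous-mapping step more formal; there is no substantive difference.
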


\begin{remark}\label{rem:Bst}
As a by-product of Theorem \ref{thm:meta-main} and the representation \eqref{eq:meta-cusum} we obtain the weak convergence of the double-indexed CUSUM-process \eqref{eq:meta-cusum}, that is ($m \to \infty$)
\begin{align}\label{conv:cusum}
\big \{ m^{-3/2}\cdot\U(\floor{ms},\floor{mt})
\big \}_{(s,t) \in \Delta_2}
\convd  \{  \Sigma_{F}^{1/2}B(s,t)\}_{(s,t) \in \Delta_2}~,
\end{align}
where $
\Delta_2 $
denotes the $2$-dimensional triangle in $[0,T+1]^2$ and the process $B$ is defined in \eqref{eq:Bst}.
In particular the covariance structure of
the process $B$ is given by
\begin{align*}
\Cov \big (B(s_1, t_1), B(s_2, t_2) \big )
&= t_1t_2(s_1 \wedge s_2) - t_1s_2(s_1 \wedge t_2) - s_1t_2 (t_1 \wedge s_2) + s_1s_2(t_1 \wedge t_2)~.
\end{align*}
Consequently, the process $\{B(s,t)\}_{ (s,t) \in \Delta_2}$ can be considered as a natural extension of the standard Brownian bridge as for fixed $t$ the process $\{B(s,t)\}_{ s \in [0,t] }$ is a Brownian bridge on the interval $[0,t]$.
\end{remark}

\noindent
Observing the definition \eqref{eq:meta-cusum} the statistic \eqref{genstat} allows the representation
\begin{align} \label{dhat}
\hatD_m(k)
= m^{-3} \max_{j=0}^{k-1}
| \U^\top(m+j,m+k) \hat \Sigma_m^{-1} \U(m+j,m+k) |~,
\end{align}
and we obtain the following Corollary as a consequence of Theorem \ref{thm:meta-main}.
\begin{corollary}\label{cor:meta-lvl1}
Let the assumptions of Theorem \ref{thm:meta-main} be satisfied.
If the null hypothesis in \eqref{globalhypo0} holds, and $\hat \Sigma_m$ denotes a consistent, non-singular estimator of the long-run variance $\Sigma_F$ defined in \eqref{eq:meta-covar}, then as $m \to \infty$
\begin{align*}
\max_{k=1}^{Tm} \dfrac{\hatD_m(k)}{w(k/m)} \convd
\sup_{t \in [1,T+1]} \sup_{s\in [1,t]}
\dfrac{B(s,t)^\top B(s,t)}{w(t-1)}~.
\end{align*}
for any threshold function $w: [0,T] \to \R^+ $, which is increasing.
\end{corollary}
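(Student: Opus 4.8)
The plan is to deduce the statement from the weak convergence of the double-indexed CUSUM process in Remark~\ref{rem:Bst} by an application of the continuous mapping theorem, after rewriting the finite double maximum defining $\max_{k=1}^{Tm}\hatD_m(k)/w(k/m)$ as the supremum of a step process over the triangle $\Delta_2$.

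First I would reparametrise. Set $\hat{\U}_m(s,t) = m^{-3/2}\U(\floor{ms},\floor{mt})$ for $(s,t)\in\Delta_2$, so that choosing $s=(m+j)/m$ and $t=(m+k)/m$ gives $\floor{ms}=m+j$, $\floor{mt}=m+k$, $m^{-3/2}\U(m+j,m+k)=\hat{\U}_m(s,t)$ and $w(k/m)=w(t-1)$. Then the representation \eqref{dhat} yields
\begin{align*}
\max_{k=1}^{Tm}\frac{\hatD_m(k)}{w(k/m)}
= \max_{k=1}^{Tm}\max_{j=0}^{k-1}
\frac{\big|\,\hat{\U}_m(s,t)^{\top}\hat\Sigma_m^{-1}\hat{\U}_m(s,t)\,\big|}{w(t-1)}.
\end{align*}
Since $\hat{\U}_m$ is constant on grid cells of side length $1/m$ while $w$ is increasing, the supremum of $1/w(t-1)$ over such a cell is attained at its lower $t$-edge; hence the right-hand side equals $\Phi(\hat{\U}_m,\hat\Sigma_m)$, where for a bounded $f\in\ell^{\infty}(\Delta_2,\R^p)$ and an invertible matrix $A$,
\begin{align*}
\Phi(f,A) := \sup_{1\le s\le t\le T+1}\frac{\big|\,f(s,t)^{\top}A^{-1}f(s,t)\,\big|}{w(t-1)}
\end{align*}
(the $t$-range of this discrete supremum slightly exceeds $T+1$, but this is harmless because the limit process has continuous paths).

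Next I would pass to the limit. By Remark~\ref{rem:Bst}, $\hat{\U}_m\convd\Sigma_F^{1/2}B$ in $\ell^{\infty}(\Delta_2,\R^p)$, and by assumption $\hat\Sigma_m\convp\Sigma_F$; since the second limit is deterministic, Slutsky's lemma gives the joint convergence $(\hat{\U}_m,\hat\Sigma_m)\convd(\Sigma_F^{1/2}B,\Sigma_F)$ in $\ell^{\infty}(\Delta_2,\R^p)\times\R^{p\times p}$. Because $w$ is increasing and positive on $[0,T]$ we have $\inf_{[0,T]}w=w(0)>0$, so
\begin{align*}
\big|\Phi(f_1,A_1)-\Phi(f_2,A_2)\big|
\le \frac{1}{w(0)}\sup_{(s,t)\in\Delta_2}\big|\,f_1(s,t)^{\top}A_1^{-1}f_1(s,t)-f_2(s,t)^{\top}A_2^{-1}f_2(s,t)\,\big|,
\end{align*}
which tends to $0$ whenever $f_n\to f$ uniformly (with $f$ bounded) and $A_n\to A$ with $A$ invertible, by continuity of matrix inversion. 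As $\Sigma_F^{1/2}B$ has almost surely bounded continuous paths and $\Sigma_F$ is positive definite, $\Phi$ is continuous at $(\Sigma_F^{1/2}B,\Sigma_F)$ almost surely, and the continuous mapping theorem gives
\begin{align*}
\max_{k=1}^{Tm}\frac{\hatD_m(k)}{w(k/m)}=\Phi(\hat{\U}_m,\hat\Sigma_m)
\convd\Phi(\Sigma_F^{1/2}B,\Sigma_F)
=\sup_{t\in[1,T+1]}\sup_{s\in[1,t]}\frac{\big|\,B(s,t)^{\top}\Sigma_F^{1/2}\Sigma_F^{-1}\Sigma_F^{1/2}B(s,t)\,\big|}{w(t-1)}.
\end{align*}
Since $\Sigma_F^{1/2}\Sigma_F^{-1}\Sigma_F^{1/2}=I_p$ and $B(s,t)^{\top}B(s,t)\ge 0$ (and adding the null boundary point $s=t$ leaves the supremum unchanged), this is precisely the asserted limit.

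The step I expect to require the most care is the first one: turning the finite double maximum over $(k,j)$ into the supremum $\Phi(\hat{\U}_m,\hat\Sigma_m)$ over the full triangle must be justified in terms of the half-open grid cells, the monotonicity of $w$ within a cell, and the behaviour at the boundaries $t=1$ (where $B$ degenerates to $0$ while $w(t-1)$ stays bounded away from $0$) and $t=T+1$. None of this is deep, but it is the part of the argument that must be made precise; everything after it is a routine continuous-mapping and Slutsky computation.
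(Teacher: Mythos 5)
Your proposal is correct and follows essentially the same route as the paper: rewrite the discrete double maximum as a supremum of the piecewise-constant process over the triangle using the monotonicity of $w$, invoke the weak convergence of Remark \ref{rem:Bst} together with the continuous mapping theorem, and dispose of the estimated long-run variance via its consistency and the lower bound $w\ge w(0)>0$. The only cosmetic difference is that you absorb $\hat\Sigma_m$ into a jointly continuous functional and apply Slutsky plus CMT once, whereas the paper first proves the limit with the true $\Sigma_F$ and then bounds $\max_k|D_m(k)-\hatD_m(k)|$ by $\|\hat\Sigma_m^{-1}-\Sigma_F^{-1}\|_{op}\sup|m^{-3/2}\U|^2=o_\Pb(1)$; the two arguments are equivalent.
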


\noindent By the result in Corollary \ref{cor:meta-lvl1} it is reasonable to choose for a given level $\alpha$ a threshold function $w_{\alpha}(\cdot)$, such that
\begin{align}\label{ineq:threshold}
\Pb \bigg( \sup_{t \in [1,T+1]} \sup_{s \in [1,t]}
\dfrac{ B(s,t)^\top B(s,t)}{w_{\alpha}(t-1)} > 1 \bigg)
= \alpha
\end{align}
and to reject the null hypothesis $H_0$ in \eqref{globalhypo0} at time $k$, if
\begin{align}\label{ineq:rejectnew}
\hat{D}_m(k) > w_{\alpha}(k/m)~.
\end{align}
By Corollary \ref{cor:meta-lvl1} this test has asymptotic level $\alpha$, that is
\begin{align*}
\limm \Pb_{H_0}\Big( \max_{k=1}^{Tm} \dfrac{\hat{D}_m(k)}{w_{\alpha}(k/m)} > 1 \Big)
& = \alpha~
\end{align*}
(if the  assumptions of
Theorem \ref{thm:meta-main} hold and $w_{\alpha}$ satisfies \eqref{ineq:threshold}).
The choice of $w_{\alpha}(\cdot)$ has been investigated by several authors [see \cite{Chu1996}, \cite{Aue2009} and \cite{Wied2013} among others] and we will compare different options by means of a simulation study in Section \ref{sec5}.
Note that one can take any function (which is increasing and bounded from below by a positive constant) and multiply an appropriate constant such that \eqref{ineq:threshold} is fulfilled.
Next we discuss the consistency of the monitoring scheme \eqref{ineq:rejectnew}.
For this purpose  we consider the alternative hypothesis in \eqref{globalhypo1}, where the location of the change point is increasing with the length of the training sample, that is $m + k^* = \lfloor mc \rfloor $ for some $1 < c < T+1$.
Recalling the definition of $\hat D_m(k)$ in \eqref{genstat} and observing the inequality
\begin{align*}
\max_{k=1}^{Tm} \hat D_m(k)
\geq {\floor{mc}^2 \big(T(m+1) - \lfloor mc \rfloor \big)^2 \over m^3} \cdot
\frac{
\big( \hattheta_{1}^{\floor{mc}}  - \hattheta_{\floor{mc}+1}^{T(m+1)} \big )^\top
 \hat \Sigma_m^{-1}
\big( \hattheta_{1}^{\floor{mc}}  - \hattheta_{\floor{mc}+1}^{T(m+1)} \big )}{w_{\alpha}(T)}
\end{align*}
it is intuitively clear that the statistic $\max_{k=1}^{Tm} \hat D_m(k)$ converges to infinity, provided that $\hattheta_{1}^{\floor{mc}}$ and  $\hattheta_{\floor{mc}+1}^{T(m+1)}$ are consistent estimates of the parameter $\theta$ before and after the change point $m+k^*=\floor{mc}$ and $\hat \Sigma_m$
converges to a positive definite $ p \times p$ matrix.
The following Theorem \ref{thm:meta-power} makes these heuristic arguments more precise.
Its proof requires several assumptions, which are stated first.
The result might be even correct under slightly weaker assumptions.
However, in the form stated below we can also prove consistency of a sequential scheme based on a self-normalized version of $\hatD_m (k)$ (see Theorem \ref{cor:meta-lvl1SN} in Section \ref{sec3b}).

\begin{assump}\label{assump:meta-power}
If the alternative hypothesis $H_1$ defined in \eqref{globalhypo1} holds we assume that the change occurs at position $m+k^*=\floor{mc}$ for some $c \in (1,T+1)$.
Moreover, let  $\{ Z_t(1)\}_{t \in \mathbb{Z}}$ and $\{ Z_t(2)\}_{t \in \mathbb{Z}}$  denote (strictly) stationary $\R^d$-valued processes with marginal distribution functions $F^{(1)}$ and $F^{(2)}$, respectively, such that \begin{align*}
\theta(F^{(1)})
\neq \theta(F^{(2)}) ~,
\end{align*}
and that for each $m \in \mathbb{N}$
\begin{align}
\label{mix3a}
(X_1,X_2,\dots,  X_{\floor{mc}} )
&\eqd (Z_1(1), \dots, Z_{\lfloor mc \rfloor}(1))~,\\
\label{mix3b} (X_{\floor{mc}+1}, \dots, X_{\floor{mT}})
&\eqd (Z_{\floor{mc}+1}(2), \dots, Z_{\floor{mT}}(2))~.
\end{align}
Note, that formally the process $\{X_{t}\}_{t=1, \ldots \floor{mT}} $ is a triangular array, that is $X_{t}=X_{m,t}$, but we do not reflect this in our notation.
Further, we assume that there exist two (standard) Brownian motions $W_1$ and $W_2$ such that the joint weak convergence
\begin{align*}
\left(
\begin{array}{c}
\big\{\frac{1}{\sqrt{m}} \sum_{t=1}^{\floor{ms}} \IF(Z_t(1),F^{(1)},\theta)  \big\}_{s\in [0,c]} \\
\big\{\frac{1}{\sqrt{m}} \sum_{t=\floor{mc}+1}^{\floor{ms}} \IF(Z_{t}(2), F^{(2)},\theta)  \big\}_{s\in [c,T+1]}
\end{array}
\right)~~\convd ~
\left(
\begin{array}{c}
 \big\{\sqrt{\Sigma_{F^{(1)}}} W_1(s)\big\}_{s\in [0,c]} \\
 \big\{\sqrt{\Sigma_{F^{(2)}} } \big( W_2(s) - W_2(c) \big)\big\}_{s\in [c,T+1]}
\end{array}
\right)
\end{align*}
holds,  where $\Sigma_{F^{(1)}}$ and $\Sigma_{F^{(2)}}$ denote positive definite matrices defined in the same way as \eqref{eq:meta-covar}, that is
\begin{align}\label{eq:meta-covartwo}
\Sigma_{F^{(\ell)}}
= \sum_{t \in \mathbb{Z}}
\Cov\big(\IF(Z_0 (\ell) , F^{(\ell)}, \theta),
~\IF(Z_t (\ell) , F^{(\ell)}, \theta) \big)
 \in \R^{p\times p}~,~\ell =1,2
\end{align}
and both phases in \eqref{mix3a} and \eqref{mix3b} fulfill Assumption \ref{assump:meta-2} for the corresponding expansion.
\end{assump}

\medskip

\begin{remark} \label{rem38}
~~
\begin{itemize}
\item[(a)]
The interpretation of Assumption \ref{assump:meta-power} is as follows: There exist two regimes and the process under consideration switches from one regime to the other.
\item[(b)]
The assumption of two stationary phases before and after the change point is commonly made in the literature to analyze change point tests under the alternative
[see for example \cite{Aue2009}, \cite{Dette2016}, \cite{Kirch2018} among others].
Note, that we do not assume that the two limiting processes $W_1$ and $W_2$ are independent.
\item[(c)]
Often Assumption \ref{assump:meta-power} is directly implied by the underlying change point problem. For example, in the situation of the mean vector introduced in \eqref{ex1} it is usually assumed that $X_t = \mu_t + \varepsilon_t$, where $\{\varepsilon_t\}_{t\in \Z}$ is a stationary process and $\mu_t =\mu^{(1)}$ if $t \leq \floor{mc}$ and $\mu_t =\mu^{(2)}$  if $t \geq \floor{mc}+1$. In this case Assumption \ref{assump:meta-power} is obviously satisfied.
Further examples are discussed in Section \ref{sec4}.
\end{itemize}
\end{remark}

\begin{theorem}\label{thm:meta-power}
Let Assumption \ref{assump:meta-power} be satisfied and let the threshold function $w_\alpha$ satisfy \eqref{ineq:threshold}.
Further assume that $\hat{\Sigma}_m$ denotes a consistent estimator of the long-run variance $\Sigma_{F^{(1)}}$ based on the  observations $X_1,\dots,X_m$.
Under the alternative hypothesis $H_1$ we have
\begin{align*}
\limm \Pb \bigg( \max_{k=0}^{Tm} \dfrac{ \hatD_m(k)}{w_{\alpha}(k/m)} > 1 \bigg)
= 1~.
\end{align*}
\end{theorem}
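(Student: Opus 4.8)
The plan is to bound the maximum from below by a single, well-chosen term, show that this term diverges to infinity in probability, and conclude that the rejection probability tends to one. Concretely, I would take $k=Tm$ (so that $m+k=m(T+1)$, an integer since $Tm\in\N$) and, inside the inner maximum defining $\hatD_m(Tm)$ in \eqref{genstat}, select the index $j=\floor{mc}-m$. For all large $m$ this index lies in $\{0,\dots,Tm-1\}$ because $1<c<T+1$, and for this choice the two blocks $X_1,\dots,X_{m+j}=X_1,\dots,X_{\floor{mc}}$ and $X_{m+j+1},\dots,X_{m+k}=X_{\floor{mc}+1},\dots,X_{m(T+1)}$ split the sample exactly at the true change point $m+k^*=\floor{mc}$. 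Since $w_{\alpha}$ maps into $\R_+$ and is increasing, $w_\alpha(Tm/m)=w_\alpha(T)\in(0,\infty)$, and keeping only $k=Tm$ in the outer maximum and $j=\floor{mc}-m$ in the inner one yields the lower bound
\begin{align*}
\max_{k=0}^{Tm}\frac{\hatD_m(k)}{w_{\alpha}(k/m)}
\ \geq\ \frac{\hatD_m(Tm)}{w_{\alpha}(T)}
\ \geq\ a_m\,Q_m,
\end{align*}
where $a_m=\frac{1}{w_{\alpha}(T)}\,m^{-3}\floor{mc}^2\big(m(T+1)-\floor{mc}\big)^2$ and $Q_m=\big(\hattheta_1^{\floor{mc}}-\hattheta_{\floor{mc}+1}^{m(T+1)}\big)^{\top}\hat{\Sigma}_m^{-1}\big(\hattheta_1^{\floor{mc}}-\hattheta_{\floor{mc}+1}^{m(T+1)}\big)$. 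Writing $a_m=\frac{m}{w_{\alpha}(T)}\,(\floor{mc}/m)^2\,(T+1-\floor{mc}/m)^2$ shows $a_m\to\infty$, since the factor multiplying $m$ converges to $c^2(T+1-c)^2/w_{\alpha}(T)>0$.

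The second step is to prove that $Q_m\convp\delta:=\big(\theta(F^{(1)})-\theta(F^{(2)})\big)^{\top}\Sigma_{F^{(1)}}^{-1}\big(\theta(F^{(1)})-\theta(F^{(2)})\big)>0$. By \eqref{mix3a}, $\hattheta_1^{\floor{mc}}$ has the same distribution as $\theta$ applied to the empirical distribution function of $Z_1(1),\dots,Z_{\floor{mc}}(1)$; using the linearization \eqref{eq:meta-remainder} for the first regime, together with the first component of the joint weak convergence in Assumption \ref{assump:meta-power} evaluated at $s=c$ (which gives $\frac{1}{\floor{mc}}\sum_{t=1}^{\floor{mc}}\IF(Z_t(1),F^{(1)},\theta)=O_{\Pb}(m^{-1/2})$) and the remainder bound \eqref{eq:meta-remainderrate} for the first regime (which gives $|R_{1,\floor{mc}}|=\op(m^{-1/2})$), one obtains $\hattheta_1^{\floor{mc}}=\theta(F^{(1)})+\op(1)$. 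An entirely analogous argument based on \eqref{mix3b}, the second component of the joint weak convergence evaluated at $s=T+1$, and the remainder bound for the second regime — using that the post-change block contains $m(T+1)-\floor{mc}\geq m(T+1-c)\to\infty$ observations and that this number is of order $m$ — yields $\hattheta_{\floor{mc}+1}^{m(T+1)}\convp\theta(F^{(2)})$. Finally $\hat{\Sigma}_m\convp\Sigma_{F^{(1)}}$, which is positive definite, so $\hat{\Sigma}_m$ is non-singular with probability tending to one and $\hat{\Sigma}_m^{-1}\convp\Sigma_{F^{(1)}}^{-1}$; one more application of the continuous mapping theorem to the quadratic form gives $Q_m\convp\delta$, and $\delta>0$ because $\theta(F^{(1)})\neq\theta(F^{(2)})$ and $\Sigma_{F^{(1)}}^{-1}$ is positive definite.

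To conclude, combine the two steps: since $a_m\to\infty$ is a deterministic sequence and $Q_m\convp\delta>0$, for any $M>0$ we have $\Pb(a_mQ_m>M)\geq\Pb(Q_m>\delta/2)\to1$ as soon as $a_m\delta/2>M$, so that $a_mQ_m\convp\infty$ and in particular $\Pb(a_mQ_m>1)\to1$. Together with the lower bound of the first step this gives $\Pb\big(\max_{k=0}^{Tm}\hatD_m(k)/w_{\alpha}(k/m)>1\big)\to1$, which is the assertion. A self-normalized version of the argument (for Theorem \ref{cor:meta-lvl1SN}) would proceed identically, replacing $\hat{\Sigma}_m$ by the self-normalizer and using its almost sure positive definiteness in the limit.

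I do not expect a serious obstacle here; the two points that require some care are (i) the passage from the auxiliary stationary processes $\{Z_t(\ell)\}$ to the triangular array $\{X_t\}=\{X_{m,t}\}$, which is legitimate because $\hattheta_1^{\floor{mc}}$ and $\hattheta_{\floor{mc}+1}^{m(T+1)}$ are measurable functions of precisely the blocks appearing in \eqref{mix3a} and \eqref{mix3b}, and equality in distribution combined with convergence in probability to a constant transfers to the $X$'s; and (ii) checking that the split index $j=\floor{mc}-m$ is admissible and that $m+j$ and $m+k$ stay within $\{1,\dots,m(T+1)\}$, which is immediate from $1<c<T+1$. Everything else is routine: the divergence $a_m\to\infty$ is elementary, and the consistency $Q_m\convp\delta$ is a law-of-large-numbers statement handed to us directly by Assumptions \ref{assump:meta-2} and \ref{assump:meta-power}.
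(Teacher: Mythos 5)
Your proposal is correct and follows essentially the same route as the paper: bound the maximum from below by the single term with $k=Tm$ and split index at the true change point $\floor{mc}$, use the linearization from Assumption \ref{assump:meta-2} together with the joint weak convergence in Assumption \ref{assump:meta-power} to control both segments and the remainders, and invoke consistency of $\hat\Sigma_m$. The only cosmetic difference is that you factor out the deterministic rate $a_m\to\infty$ and show the quadratic form converges in probability to a positive constant, whereas the paper keeps the $m^{-3/2}$ scaling on $\U(\floor{mc},m(T+1))$ and lets the drift term diverge; these are the same argument.
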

\medskip
\begin{remark} \label{rem39}
We can establish similar results for the statistics \eqref{scheme:wied} and \eqref{scheme:kirch}
proposed by \cite{Wied2013} among others and \cite{Fremdt2014} among others, respectively.
For example, if the assumptions of Theorem \ref{thm:meta-main} are satisfied we obtain the weak convergence
\begin{align*}
\max_{k=1}^{Tm} \dfrac{\hat Q_m(k)}{w(k/m)}
&\convd
\sup_{t \in [1,T+1]} \dfrac{B(t,1)^\top B(t,1)}{w(t-1)}~,\\
\max_{k=1}^{Tm} \dfrac{\hat P_m(k)}{w(k/m)}
& \convd
\sup_{t \in [1,T+1]} \sup_{s \in [1,t]}
\dfrac{\big(B(1,s) + B(t,1)\big)^\top \big(B(1,s) + B(t,1)\big)}{w(t-1)}
\end{align*}
under the null hypothesis, which can be used to construct an asymptotic level $\alpha$ monitoring scheme based on the statistics $\hat Q_m$ and $\hat P_m$, respectively.
Consistency of the corresponding tests follows along the arguments given in the proof of Theorem \ref{thm:meta-power}.
The details are omitted for the sake of brevity.
The finite sample properties of the three different tests will be investigated by means of a simulation study in Section \ref{sec5}.
\end{remark}

\subsection{Self-Normalization}
\label{sec3b}

The test proposed in Section \ref{sec3a} requires an estimator of the long-run variance $\hat \Sigma_m$, and we discuss commonly used estimates for this purpose in Section \ref{sec4}.
However, it has been pointed out by several authors that this problem is not an easy one as the common estimates depend sensitively on a regularization parameter (for example a bandwidth), which might be difficult to select in practice.
An alternative to long-run variance estimation is the concept of self-normalization, which will be investigated in this section.
This approach has been studied intensively for a-posteriori change point analysis [see \cite{Shao2010} and \cite{shao2015} among many others], but - to our best knowledge - self-normalization has not been considered in the context of sequential monitoring.
In the following we discuss a self-normalized version of the statistic $\hatD_m$ proposed in this paper (see equation \eqref{genstat}).
Self-normalization of the statistic $\hat P_m$ in \eqref{scheme:kirch} will be briefly discussed in Remark \ref{rem:nwied}.\\
To be precise, we define a self-normalizing matrix
\begin{align}
\label{eq:meta-snprocess}
\begin{split}
\V(z,u)
&= \sum_{j=1}^{z}j^2(z-j)^2
\big(\hattheta_1^{j} - \hattheta_{j+1}^{z}\big)
\big(\hattheta_1^{j} - \hattheta_{j+1}^{z}\big)^\top\\
&+ \sum_{j=z+1}^{u} (u-j)^2(j-z)^2
\big(\hattheta_{z+1}^{j} - \hattheta_{j+1}^{u} \big)
\big(\hattheta_{z+1}^{j} - \hattheta_{j+1}^{u} \big)^\top
\end{split}
\end{align}
and replace the estimate $\hat \Sigma_m $ of the long-run variance in \eqref{genstat} by the matrix $\frac {1}{m^4} \V (m+j,m+k)$.
This yields the self-normalized statistic
\begin{align} \label{genstatSN}
\hatDSN  (k)
= m \max_{j=0}^{k-1}  \,
(m+j)^2(k - j)^2
\big( \hattheta_{1}^{m+j}  - \hattheta_{m+j+1}^{m+k}\big)^\top
\V^{-1}(m+j,m+k)
\big( \hattheta_{1}^{m+j} - \hattheta_{m+j+1}^{m+k} \big) ~.
\end{align}


\medskip
\begin{theorem}\label{cor:meta-lvl1SN}
Let $w: [0,T] \to \R^+ $ denote any threshold function, which is increasing and let the assumptions of Theorem \ref{thm:meta-main} be satisfied.
If the null hypothesis in \eqref{globalhypo0} holds, then as $m \to  \infty$
\begin{align}
\begin{split}
\max_{k=1}^{Tm} \dfrac{\hatDSN_m(k)}{w(k/m)} \convd
\sup_{t \in [1,T+1]} \sup_{s\in [1,t]}
\dfrac{| \tilde B(s,t) |}{w(t-1)}~,
\end{split}
\end{align}
where
\begin{align*}
\tilde{B}(s,t)
= B^\top(s,t)\big(N_1(s) + N_2(s,t)\big)^{-1}B(s,t)~,
\end{align*}
the process $\{B(s,t)\}_{(s,t) \in \Delta_2}$ is defined in \eqref{eq:Bst} and $\{N_1(s)\}_{s\in [0,T+1]}$ and $\{N_2(s,t)\}_{(s,t) \in \Delta_2}$ are given by
\begin{align}\label{eq:denominator}
\begin{split}
N_1(s)
&= \int\limits_0^sB(r,s)B^\top(r,s)dr~,\\
N_2(s,t)
&= \int_s^t \big(B(r,t) + B(s,r) - B(s,t) \big)\big(B(r,t) + B(s,r) - B(s,t) \big)^\top dr~.
\end{split}
\end{align}

\end{theorem}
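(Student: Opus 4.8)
The plan is to express $\hatDSN_m$ through the process $\tilde{\U}$ of \eqref{utilde}, to recognise the random normaliser $\V$ as a Riemann-sum functional of that process, and then to combine Theorem \ref{thm:meta-main} with the continuous mapping theorem. Using the convention $\hattheta_z^u=0$ for $z>u$ and the definition \eqref{eq:meta-cusum}, one has $\U(m+j,m+k)=(k-j)(m+j)\bigl(\hattheta_1^{m+j}-\hattheta_{m+j+1}^{m+k}\bigr)$, so that, in analogy with \eqref{dhat},
\[
\hatDSN_m(k)=m\max_{j=0}^{k-1}\U^{\top}(m+j,m+k)\,\V^{-1}(m+j,m+k)\,\U(m+j,m+k)~,
\]
and, writing $z=m+j$, $u=m+k$ in \eqref{eq:meta-snprocess} and inserting \eqref{utilde}, \eqref{eq:meta-cusum},
\[
\V(z,u)=\sum_{i=1}^{z}\U(i,z)\U^{\top}(i,z)+\sum_{i=z+1}^{u}\tilde{\U}(z,i,u)\,\tilde{\U}^{\top}(z,i,u)~.
\]
Thus both the numerator and the normaliser are continuous functionals of the single process whose limit is given by \eqref{conv:kern}.

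Next I would set $\tilde{u}_m(r,s,t):=m^{-3/2}\tilde{\U}(\floor{mr},\floor{ms},\floor{mt})$ and $g(r,s,t):=\Sigma_F^{1/2}\{B(s,t)+B(r,s)-B(r,t)\}$; by Theorem \ref{thm:meta-main}, $\tilde{u}_m\convd g$ in $\ell^{\infty}(\Delta_3,\R^p)$, and since $\U(z,u)=\tilde{\U}(0,z,u)$ and $B(0,\cdot)\equiv 0$ this specialises to $m^{-3/2}\U(\floor{ms},\floor{mt})\convd\Sigma_F^{1/2}B(s,t)$ as in Remark \ref{rem:Bst}. After the routine floor-function bookkeeping (take $s=1+j/m$, $t=1+k/m$) the formula for $\V$ becomes, for $1\le s\le t$,
\[
\tfrac{1}{m^4}\V(\floor{ms},\floor{mt})
=\tfrac{1}{m}\sum_{i=1}^{\floor{ms}}\tilde{u}_m\bigl(0,\tfrac{i}{m},s\bigr)\tilde{u}_m^{\top}\bigl(0,\tfrac{i}{m},s\bigr)
+\tfrac{1}{m}\sum_{i=\floor{ms}+1}^{\floor{mt}}\tilde{u}_m\bigl(s,\tfrac{i}{m},t\bigr)\tilde{u}_m^{\top}\bigl(s,\tfrac{i}{m},t\bigr)~,
\]
a Riemann-sum functional of $\tilde{u}_m$. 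Since the limit $g$ has a.s.\ continuous sample paths (because $B(s,t)=tW(s)-sW(t)$ and $W$ is continuous), an extended continuous mapping argument shows that this functional converges, jointly with $m^{-3/2}\U(\floor{ms},\floor{mt})$, to its integral counterpart, so that
\[
\tfrac{1}{m^4}\V(\floor{ms},\floor{mt})\ \convd\ \Sigma_F^{1/2}\bigl(N_1(s)+N_2(s,t)\bigr)\Sigma_F^{1/2}
\]
with $N_1$, $N_2$ as in \eqref{eq:denominator} (the two sums converging to $\Sigma_F^{1/2}N_1(s)\Sigma_F^{1/2}$ and $\Sigma_F^{1/2}N_2(s,t)\Sigma_F^{1/2}$, respectively).

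Then, since with $z=\floor{ms}$, $u=\floor{mt}$ one has
\[
m\,\U^{\top}(z,u)\V^{-1}(z,u)\U(z,u)=\bigl(m^{-3/2}\U(z,u)\bigr)^{\top}\bigl(\tfrac{1}{m^4}\V(z,u)\bigr)^{-1}\bigl(m^{-3/2}\U(z,u)\bigr)~,
\]
the continuous mapping theorem applied to the map $(v,M)\mapsto v^{\top}M^{-1}v$ yields, jointly over $(s,t)$,
\[
m\,\U^{\top}(z,u)\V^{-1}(z,u)\U(z,u)\ \convd\ B^{\top}(s,t)\bigl(N_1(s)+N_2(s,t)\bigr)^{-1}B(s,t)=\tilde B(s,t)~,
\]
where the cancellation of the two factors $\Sigma_F^{1/2}$ — the whole point of self-normalisation — uses that $\Sigma_F^{1/2}$ is symmetric and invertible; note $\tilde B(s,t)\ge 0$, so the modulus in the statement is immaterial. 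Finally, passing from the double discrete maximum $\max_{1\le k\le Tm}\max_{0\le j\le k-1}(\cdot)/w(k/m)$ to $\sup_{t\in[1,T+1]}\sup_{s\in[1,t]}|\tilde B(s,t)|/w(t-1)$ is carried out exactly as in the proof of Corollary \ref{cor:meta-lvl1}: one identifies $k/m\leftrightarrow t-1$ and $(m+j)/m\leftrightarrow s$, the grids become dense, $w$ is increasing and bounded below by a positive constant, and tightness of the limit together with continuity of the supremum functional on $\ell^{\infty}$ gives the claim.

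The main obstacle is the invertibility that is implicit throughout: the quadratic form defining $\hatDSN_m$ requires $\V(m+j,m+k)$ to be nonsingular, and the limit $\tilde B$ requires $N_1(s)+N_2(s,t)$ to be positive definite uniformly over the compact set $\{1\le s\le t\le T+1\}$, which is also what legitimises the continuous-mapping steps. The uniform positive definiteness of the limit would follow from pointwise positive definiteness of $N_1(s)$ for $s>0$ — which holds because the $\R^p$-valued Gaussian process $\{B(r,s)\}_{r\in[0,s]}$ is a.s.\ not contained in a proper linear subspace of $\R^p$, the components of $W$ being independent Brownian motions, so that $\int_0^sB(r,s)B^{\top}(r,s)\,dr$ is a.s.\ nonsingular — together with a.s.\ continuity of $(s,t)\mapsto\lambda_{\min}\bigl(N_1(s)+N_2(s,t)\bigr)$ on the compact index set. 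Nonsingularity of $\V$ with probability tending to one then follows from the weak convergence by a Skorokhod-representation argument, on the event of probability tending to one on which $\tfrac{1}{m^4}\V$ is uniformly close to its a.s.\ nonsingular limit. A secondary, purely technical point is the verification that the Riemann-sum functional converges to the integral functional along sup-norm-convergent sequences, a routine application of the extended continuous mapping theorem exploiting the continuity of the limiting paths of $B$.
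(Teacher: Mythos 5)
Your argument follows essentially the same route as the paper's proof: rewrite $m^{-4}\V(\floor{ms},\floor{mt})$ as an integral functional of the process $\tilde{\U}$, invoke Theorem \ref{thm:meta-main} together with the continuous mapping theorem to obtain joint convergence of $\bigl(m^{-3/2}\U, m^{-4}\V\bigr)$ to $\bigl(\Sigma_F^{1/2}B(s,t),\ \Sigma_F^{1/2}(N_1(s)+N_2(s,t))\Sigma_F^{1/2}\bigr)$, and then a further continuous mapping step for the quadratic form and the max-to-sup passage as in Corollary \ref{cor:meta-lvl1}. Your additional discussion of the almost sure positive definiteness of $N_1(s)+N_2(s,t)$, which justifies inverting the normaliser, is a point the paper leaves implicit, but the proof is correct and matches the paper's argument.
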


\medskip
\noindent
The monitoring rule is now defined in the same way as described in Section \ref{sec3a} determining a threshold function $w_{\alpha}(\cdot)$, such that
\begin{align}\label{ineq:thresholdSN}
\Pb \bigg( \sup_{t \in [1,T+1]} \sup_{s \in [1,t]} \dfrac{ | \tilde{B}(s,t) |}{w_{\alpha}(t-1)} > 1 \bigg) = \alpha
\end{align}
for a given level $\alpha$,
and rejecting the null hypothesis $H_0$ in \eqref{globalhypo0} at the time $k$, if
\begin{align}\label{ineq:rejectnewSN}
\hatDSN_m(k) > w_{\alpha}(k/m)~.
\end{align}
By Theorem \ref{cor:meta-lvl1SN} this test has asymptotic level $\alpha$ and our next result shows that this procedure is also consistent.

\medskip
\begin{theorem}
\label{thm:meta-powerSN}
Let Assumption \ref{assump:meta-power} be satisfied and let the threshold function $w_\alpha$ satisfy \eqref{ineq:thresholdSN}.
Under the alternative hypothesis $H_1$ we have
\begin{align*}
\limm \Pb \Big( \max_{k=0}^{Tm} \dfrac{ \hatDSN_m(k)}{w_{\alpha}(k/m)} > 1 \Big) = 1~.
\end{align*}
\end{theorem}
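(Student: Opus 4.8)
\medskip
\noindent\textbf{Proof strategy.}
The plan is to follow the consistency argument for the non-normalized statistic $\hatD_m$ (Theorem \ref{thm:meta-power}), the only new ingredient being control of the self-normalizing matrix $\V$ from \eqref{eq:meta-snprocess}. I would fix some $t_0\in(c,T+1)$ and restrict attention to the single monitoring time $k_m:=\floor{mt_0}-m$, which lies in $\{1,\dots,Tm\}$ for $m$ large, and, inside the inner maximum in \eqref{genstatSN}, to the index $j_m:=\floor{mc}-m$ associated with the true change point $m+j_m=\floor{mc}$ (note $m+k_m=\floor{mt_0}$). Since $c<t_0$ one has $0\le j_m\le k_m-1$ for all large $m$, so $j_m$ is admissible, and bounding the maximum in \eqref{genstatSN} from below by this one term gives, with $a_m:=\hattheta_1^{\floor{mc}}-\hattheta_{\floor{mc}+1}^{\floor{mt_0}}$ and using that $w_\alpha$ is increasing (so that $w_\alpha(k_m/m)\le w_\alpha(T)<\infty$),
\begin{align*}
\max_{k=0}^{Tm}\frac{\hatDSN_m(k)}{w_\alpha(k/m)}\ \ge\ \frac{m}{w_\alpha(T)}\,\floor{mc}^2\big(\floor{mt_0}-\floor{mc}\big)^2\, a_m^\top\,\V^{-1}(\floor{mc},\floor{mt_0})\,a_m~.
\end{align*}
It therefore suffices to show that the right-hand side tends to $+\infty$ in probability.

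The numerator is easy. By Assumption \ref{assump:meta-power} the block $X_1,\dots,X_{\floor{mc}}$ has the law of the stationary regime $F^{(1)}$ and $X_{\floor{mc}+1},\dots,X_{\floor{mT}}$ that of $F^{(2)}$, and Assumption \ref{assump:meta-2} holds for each block. Combining the linearization \eqref{eq:meta-remainder}, the remainder bound \eqref{eq:meta-remainderrate} and the weak convergence of the respective partial-sum processes yields $\hattheta_1^{\floor{mc}}\convp\theta(F^{(1)})$ and $\hattheta_{\floor{mc}+1}^{\floor{mt_0}}\convp\theta(F^{(2)})$, hence $a_m\convp\delta:=\theta(F^{(1)})-\theta(F^{(2)})\neq 0$, and of course $\floor{mc}^2\big(\floor{mt_0}-\floor{mc}\big)^2/m^4\to c^2(t_0-c)^2>0$.

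The crucial --- and only genuinely new --- step is to prove that $m^{-4}\V(\floor{mc},\floor{mt_0})$ converges weakly to an almost surely positive definite matrix. Here the choice $j_m=\floor{mc}-m$ is essential: splitting the sum \eqref{eq:meta-snprocess} exactly at the change point, the first sum (indices $1\le j\le\floor{mc}$) equals $\sum_{j=1}^{\floor{mc}}\U(j,\floor{mc})\U(j,\floor{mc})^\top$ in the notation \eqref{eq:meta-cusum} and involves only regime-$1$ observations, while the second sum (indices $\floor{mc}+1\le j\le\floor{mt_0}$) equals $\sum_{j=\floor{mc}+1}^{\floor{mt_0}}\tilde\U(\floor{mc},j,\floor{mt_0})\tilde\U(\floor{mc},j,\floor{mt_0})^\top$ in the notation \eqref{utilde} and involves only regime-$2$ observations, so that neither block straddles the break. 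Applying the weak convergence of Theorem \ref{thm:meta-main} and Remark \ref{rem:Bst} separately to each stationary phase --- legitimate under Assumption \ref{assump:meta-power} --- together with the continuous mapping theorem for the (Riemann-sum) integral functionals appearing in \eqref{eq:denominator}, one obtains $m^{-4}\V(\floor{mc},\floor{mt_0})\convd G$, where the first block alone already contributes $\Sigma_{F^{(1)}}^{1/2}\big(\int_0^c B_1(r,c)B_1(r,c)^\top dr\big)\Sigma_{F^{(1)}}^{1/2}$, with $B_1$ built from the Brownian motion $W_1$ of Assumption \ref{assump:meta-power} as in \eqref{eq:Bst}. Since $\Sigma_{F^{(1)}}$ is positive definite, $\{B_1(r,c)\}_{r\in[0,c]}$ is an almost surely non-degenerate $p$-dimensional Gaussian process (a scaled Brownian bridge, cf.\ Remark \ref{rem:Bst}), and the second block is positive semidefinite for every $m$, the limit $G$ is almost surely positive definite. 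Had we split at some $s\neq c$, one of the two blocks would instead straddle the change point and contribute terms of exact order $m^5$, which is precisely why that choice would fail.

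Finally I would assemble the pieces. Since $a_m\convp\delta$ (a constant) and $m^{-4}\V(\floor{mc},\floor{mt_0})\convd G$ with $G$ almost surely positive definite, the continuous mapping theorem (matrix inversion being continuous on the set of positive definite matrices) gives
\begin{align*}
m^4\,a_m^\top\,\V^{-1}(\floor{mc},\floor{mt_0})\,a_m\ =\ a_m^\top\,\big(m^{-4}\V(\floor{mc},\floor{mt_0})\big)^{-1}a_m\ \convd\ \delta^\top G^{-1}\delta~,
\end{align*}
and $\delta^\top G^{-1}\delta>0$ almost surely (in particular $\V(\floor{mc},\floor{mt_0})$ is invertible with probability tending to one, so the displayed lower bound is well defined along a sequence of events of probability $\to 1$). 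Consequently the right-hand side of the lower bound displayed at the outset equals $\tfrac{1}{w_\alpha(T)}\,m\cdot\big(c^2(t_0-c)^2+o(1)\big)\cdot\big(\delta^\top G^{-1}\delta+\op(1)\big)$, which tends to $+\infty$ in probability, whence $\Pb\big(\max_{k=0}^{Tm}\hatDSN_m(k)/w_\alpha(k/m)>1\big)\to 1$, the assertion of the theorem. The main obstacle is the third step --- the convergence and almost sure non-degeneracy of the self-normalizer --- which requires re-running the functional central limit theory of Section \ref{sec3a} inside each of the two stationary regimes and passing the integral functional of \eqref{eq:denominator} to the limit; the delicate point, already stressed above, is that evaluating the self-normalizer with the split placed at the change point keeps both of its blocks of order $m^4$ rather than the generic order $m^5$, which is what makes the detector diverge at rate $m$.
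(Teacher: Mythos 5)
Your proposal is correct and follows essentially the same route as the paper's proof: bound the maximum below by the single term with the inner split placed at the change point $\floor{mc}$, use the two-regime invariance principle of Assumption \ref{assump:meta-power} so that both blocks of the self-normalizer stay within one stationary phase and $m^{-4}\V$ converges weakly to an almost surely finite (and invertible) limit built from $B^{(1)}$ and $B^{(2)}$, and conclude that the detector diverges at rate $m$. The only differences are cosmetic: the paper evaluates at the terminal time $m(T+1)$ rather than a generic $t_0\in(c,T+1)$, and you spell out the almost sure positive definiteness of the limiting normalizer, which the paper leaves implicit.
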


\medskip
\begin{remark} \label{rem:nwied}
The statistic $\hat{P}_m(k)$ defined in \eqref{scheme:kirch} can be self-normalized in a similar manner, that is
\begin{align}\label{scheme:kirchsn}
\hat{P}^{\rm SN}_m(k)
= m^3 \max_{j=0}^{k-1} (k-j)^2 \big ( \hattheta_1^{m} - \hattheta_{m+j+1}^{m+k} \big )  ^\top \V^{-1}(m+j,m+k) \big ( \hattheta_1^{m} - \hattheta_{m+j+1}^{m+k} \big ) ~.
\end{align}
If the null hypothesis holds and the assumptions of Theorem \ref{thm:meta-main} are satisfied it can be shown using \eqref{conv:cusum} and similar arguments as given in the proof of Theorem \ref{cor:meta-lvl1SN} that
\begin{align}\label{conv:kirchSN}
\begin{split}
\max_{k=1}^{Tm} \dfrac{\hat{P}^{\rm  SN}_m(k)}{w(k/m)} \convd
\sup_{t \in [1,T+1]} \sup_{s\in [1,t]}
\dfrac{| \tilde{B}^{\rm  SN}(s,t) |}{w(t-1)}~,
\end{split}
\end{align}
where the process $\tilde{B}^{\rm  SN}$ is defined by
{\begin{align*}
\tilde{B}^{\rm  SN}(s,t)
= \big( B(1,s) + B(t,1)\big)^\top \big(N_1(s) + N_2(s,t)\big)^{-1} \big( B(1,s) + B(t,1)\big)~.
\end{align*}}
Similarly, consistency follows along the lines given in the proof of Theorem  \ref{thm:meta-powerSN}.
The details are omitted for the sake of brevity.\\
On the other hand a statistic like $\hat{Q}_m(k)$ defined in \eqref{scheme:wied} cannot be self-normalized in a straightforward manner as it does not employ a maximum, which is necessary to separate points before and after the (possible) change point.
In particular one cannot use the matrix $\V$ in \eqref{eq:meta-snprocess}, which is based on such a separation. Obviously, a self-normalization approach without separation could be constructed but this would lead to a severe loss in power  and is therefore not discussed here.
We refer the reader to \cite{Shao2010} for a comprehensive discussion of this problem.
The finite sample properties of both self-normalized methods $\hat{D}^{\rm SN}_m(k)$ and $\hat{P}^{\rm SN}_m(k)$ will be compared by means of a simulation study in Section \ref{sec5}~.
\end{remark}

\subsection{Implementation} \label{sec3c}

We will close this section with a description of the algorithm to detect changes in the functional $\theta(F)$ employing self-normalization.
In the following paragraph $\hat S_m (k) $ denotes any of the statistics $\hatD_m(k), \hat{D}^{\rm SN}_m(k), \hat{P}_m(k), \hat{P}^{\rm SN}_m(k)$ and $\hat{Q}_m(k)$ discussed in Section \ref{sec3a} and \ref{sec3b}.

\begin{algorithm}
Let  $\{X_1,\dots,X_m\}$ denote the ``stable'' training data
satisfying \eqref{eq:stability}.
\begin{itemize}
\item[](\textit{Initialization}) Choose the factor $T$ to determine how much longer the monitoring can be performed. Further choose a threshold function $w_{\alpha}$ such that the probability of type I error is asymptotically $\alpha$.
\item[](\textit{Monitoring}) If $X_{m+k}$ has been observed, compute the the statistic $\hat S_m (k) $ and reject the null hypothesis of no change in the parameter $\theta (F)$ if $\hat S_m (k) > w_{\alpha}(k/m)$.
In this case stop monitoring.
Otherwise, repeat this comparison with the next observation $X_{m+k+1}$.
\item[](\textit{Stop}) If there has been no rejection at time $m+mT$, stop monitoring with observation $X_{m+mT}$ and conclude that no change has occurred within the monitoring period.
\end{itemize}
\end{algorithm}

\section{Some specific change point problems}
\label{sec4}

In this section we illustrate how the assumptions of Section \ref{sec3} can be verified for concrete functionals.
Exemplarily we consider the mean, variance and quantiles, but similar arguments could be given for other functionals under consideration.
To be precise consider a time series $\{ X_t \}_{t\in \mathbb{Z}}$, which forms a physical system in the sense of \cite{Wu2005}, that is
\begin{align}\label{eq:physicalsys}
X_{t}=
\begin{cases}
g(\varepsilon_t,\varepsilon_{t-1},\dots)\;\;\text{if}\;\; t < \floor{mc}~,\\
h(\varepsilon_t,\varepsilon_{t-1},\dots)\;\;\text{if}\;\; t  \geq \floor{mc}~,
\end{cases}
\end{align}
where $\{ \varepsilon_t \}_{t \in \mathbb{Z}} $ denotes a sequence of i.i.d. random variables with values in some measure space $\mathbb{S}$ such that the functions  $g,h: \mathbb{S}^{\mathbb{N}} \to \R^d $ are measurable.
The functions $g$ and $h$ determine the phases of the physical system before and after the change at position $\floor{mc}$ with  $c>1$, respectively.
Under the null hypothesis we will always assume that $g$ and $h$ coincide, which yields that the (whole) times series $\{ X_t \}_{t\in \mathbb{Z}}$ is strictly stationary.
In the case $g \neq h$ the random variables $X_t$ form a triangular array, but for the sake of readability we do not reflect this in our notation.
In order to adapt the concept of physical dependence to the situation considered in this paper, let $\varepsilon_0'$ be an independent copy of $\varepsilon_0$ and define the distances
\begin{align}\label{eq:physicalcoeff}
\begin{split}
\delta_{t,q}^{(1)} &=
\big(  \E [ | g(\varepsilon_t,\varepsilon_{t-1},\dots) - g(\varepsilon_t,\varepsilon_{t-1},\dots,\varepsilon_1, \varepsilon_0',\varepsilon_{-1},\dots) |^q]\big)^{1/q}~,\\
\delta_{t,q}^{(2)} &=
\big(  \E [ | h(\varepsilon_t,\varepsilon_{t-1},\dots) - h(\varepsilon_t,\varepsilon_{t-1},\dots,\varepsilon_1, \varepsilon_0',\varepsilon_{-1},\dots) |^q]\big)^{1/q}~,
\end{split}
\end{align}
which are used to quantify the (temporal) dependence within both phases of the physical system and $\delta_{t,q}^{(\ell)}$ measures the influence of $\varepsilon_0$ on the random variable $X_{t}$.
Further let
\begin{align}
\Theta_{q}^{(\ell)}
= \sum_{t=1}^\infty \delta_{t,q}^{(\ell)}~,\qquad \ell = 1,2~,
\end{align}
denote the sum of the coefficients (which might diverge).
Additionally, we define the (ordinary) long-run variance matrix of the phases before and after the change by
\begin{align}\label{eq:ordcovar}
\begin{split}
\Gamma(g) &= \sum_{t \in \mathbb{Z}} \Cov\big(g(\varepsilon_0,\varepsilon_{-1},\dots), g(\varepsilon_t,\varepsilon_{t-1},\dots)\big)~, \\
\Gamma(h) &= \sum_{t \in \mathbb{Z}} \Cov\big(h(\varepsilon_0,\varepsilon_{-1},\dots), h(\varepsilon_t,\varepsilon_{t-1},\dots)\big)~.
\end{split}
\end{align}

\subsection{Sequential testing for changes in the mean vector}
\label{sec41}

In this section we are interested in detecting changes in the mean
\begin{align} \label{meanvec1}
\mu(F_t) =\E[X_t] = \int_{\R^d} x  dF_t(x) ~,~~t=1,2, \ldots .
\end{align}
of a $d$-dimensional time series $\{X_t\}_{t\in \mathbb{Z}}$.
Sequential detection schemes for a change in the mean have been been
investigated by \cite{Chu1996}, \cite{Horvath2004} and \cite{Aue2009} among others.
We consider the closed-end-procedure developed in Section \ref{sec3} and assume the first $m$ observations $X_1,\dots ,X_{m}$ to be mean-stable.

\noindent As the mean functional \eqref{meanvec1} is linear  the influence function is given by
\begin{align*}
\IF(x,F,\mu)
= x - \mu(F)
= x - \E_F[X]~,
\end{align*}
and therefore Assumption \ref{assump:meta-2} is obviously satisfied (note that  $R_{i,j}=0$ for all $i,j$).
Assumption \ref{assump:meta-1} reduces to Donsker's invariance principle, that is
\begin{align}\label{conv:donsker}
\Big\{ \dfrac{1}{\sqrt{m}} \sum_{t=1}^{\floor{ms}} (X_t - \E[X_t]) \Big\}_{s \in [0,T+1]} \convd  \big \{ \Sigma^{1/2}_F W(s) \big \}_{s \in [0,T+1]}
\end{align}
in $\ell^{\infty}([0,T+1], \R^d)$ with $ \Sigma_F
= \sum_{t \in \mathbb{Z}}
\Cov\big(X_0,X_t\big)$,
which has been derived by \cite{Wu2005} for physical systems under the assumption
that $\Theta_{2}^{(1)} < \infty$ (see See Theorem 3 in this reference).
Note that for this functional the (ordinary) long-run variance matrix $\Gamma(g)$ and $\Sigma_F$ coincide (under stationarity).

If the alternative of a change in the mean at position $\floor{mc}$ for some $c \in (1,T+1) $ holds, we may assume that
\begin{align}\label{eq:shift}
h = g + \Delta\mu~,
\end{align}
where $ \Delta\mu = \E[X_{\floor{mc}}] - \E[X_{\floor{mc}-1}]. $
Consequently, we have $\Theta_{2}^{(1)} = \Theta_{2}^{(2)}$ and if $\Theta_{2}^{(1)} < \infty$, Assumption \ref{assump:meta-power} is also satisfied with $W_1 = W = W_2$ (see also the discussion at the end of Remark \ref{rem38}).
We summarize these observations in the following proposition.

\begin{Proposition}
Assume that \eqref{eq:physicalsys} holds with $\Theta_{2}^{(1)} < \infty$ and further let
$\hat{\Sigma}_m$ denote a consistent estimator of the (positive definite) long-run variance matrix $\Sigma_{F^{(1)}}$ (before the change) based on the observations $X_1,\dots,X_m$.
\begin{itemize}
\item[(a)]
If $g=h$, then the assumptions of Theorem \ref{cor:meta-lvl1} and \ref{cor:meta-lvl1SN} are satisfied
for the functional \eqref{meanvec1}.
In other words: The sequential tests for a change in the mean based on the statistics $\hatD$ or  $\hatDSN$ with $\theta(F_t)  = \int_{\R^d} x  dF_t(x) $
have asymptotic level $\alpha$.
\item[(b)]
Let representation \eqref{eq:shift} hold with $\Delta\mu\neq 0$, then the assumptions of Theorem \ref{thm:meta-power} and \ref{thm:meta-powerSN} are satisfied for the functional \eqref{meanvec1}.
In other words: The sequential tests for a change in the mean based on the statistics $\hatD$ or $\hatDSN$ are consistent.
\end{itemize}
\end{Proposition}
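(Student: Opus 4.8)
The plan is to verify, for the mean functional \eqref{meanvec1}, the structural conditions on which the cited results rest: Assumptions \ref{assump:meta-1} and \ref{assump:meta-2} under the null, and Assumption \ref{assump:meta-power} under the alternative. Part (a) then follows by quoting Corollary \ref{cor:meta-lvl1} (for $\hatD$) and Theorem \ref{cor:meta-lvl1SN} (for $\hatDSN$), and part (b) by quoting Theorems \ref{thm:meta-power} and \ref{thm:meta-powerSN}. The first and cheapest observation is that, since $\mu$ is linear, its influence function is $\IF(x,F,\mu)=x-\mu(F)$ and the linearization in \eqref{eq:meta-remainder} is exact, i.e. $R_{i,j}=0$ for all $i,j$; hence Assumption \ref{assump:meta-2} holds trivially and the rate \eqref{eq:meta-remainderrate} is immediate. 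This disposes of the remainder condition once and for all, in both parts and in both phases.

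For part (a), when $g=h$ the series $\{X_t\}_{t\in\Z}$ is strictly stationary, $\E[\IF(X_1,F,\mu)]=0$, and Assumption \ref{assump:meta-1} is precisely the functional central limit theorem \eqref{conv:donsker} on the interval $[0,T+1]$. This is supplied by Theorem~3 of \cite{Wu2005} under the hypothesis $\Theta_2^{(1)}<\infty$, with long-run variance $\Sigma_F=\sum_{t\in\Z}\Cov(X_0,X_t)=\Gamma(g)$, which is assumed to be positive definite. Together with the assumed consistency and non-singularity of $\hat\Sigma_m$, all hypotheses of Corollary \ref{cor:meta-lvl1} and of Theorem \ref{cor:meta-lvl1SN} are met, and the asymptotic level $\alpha$ of the two schemes follows.

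For part (b) the only point that requires a moment's care is the joint weak convergence demanded by Assumption \ref{assump:meta-power}; everything else is bookkeeping. Under the location-shift model \eqref{eq:shift} we may take $Z_t(1)=g(\varepsilon_t,\varepsilon_{t-1},\dots)$ and $Z_t(2)=h(\varepsilon_t,\varepsilon_{t-1},\dots)=g(\varepsilon_t,\varepsilon_{t-1},\dots)+\Delta\mu$, so that $\mu(F^{(2)})=\mu(F^{(1)})+\Delta\mu$ and, writing $\E[g]:=\E[g(\varepsilon_0,\varepsilon_{-1},\dots)]$, the centred influence-function summands coincide across the two regimes:
\begin{align*}
\IF(Z_t(2),F^{(2)},\mu)=Z_t(2)-\mu(F^{(2)})=g(\varepsilon_t,\varepsilon_{t-1},\dots)-\E[g]=\IF(Z_t(1),F^{(1)},\mu).
\end{align*}
Denoting this common stationary sequence by $Y_t$ and $S_m(s)=m^{-1/2}\sum_{t=1}^{\floor{ms}}Y_t$, the two partial-sum processes in Assumption \ref{assump:meta-power} are exactly $S_m(s)$ on $[0,c]$ and $S_m(s)-S_m(c)$ on $[c,T+1]$. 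Since $\Theta_2^{(1)}=\Theta_2^{(2)}<\infty$, the invariance principle of \cite{Wu2005} gives $S_m\convd\Gamma(g)^{1/2}W$ in $\ell^\infty([0,T+1],\R^d)$, and the continuous mapping theorem applied to the (sup-norm Lipschitz) map sending $f$ to $\bigl(f|_{[0,c]},\,f(\cdot)-f(c)|_{[c,T+1]}\bigr)$ converts this into the required joint limit with $W_1=W_2=W$ and $\Sigma_{F^{(1)}}=\Sigma_{F^{(2)}}=\Gamma(g)$, which is positive definite by hypothesis. Because $\Delta\mu\neq0$ we have $\theta(F^{(1)})=\E[g]\neq\E[g]+\Delta\mu=\theta(F^{(2)})$, the remainder terms vanish in both phases as noted above, and $\hat\Sigma_m$ is consistent for $\Sigma_{F^{(1)}}$ by assumption; hence Assumption \ref{assump:meta-power} is satisfied and Theorems \ref{thm:meta-power} and \ref{thm:meta-powerSN} yield consistency of both schemes. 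The nominally triangular-array structure of $\{X_t\}$ under the alternative is harmless precisely because, after centring, the array collapses to the single stationary sequence $\{Y_t\}$, to which one invariance principle suffices.
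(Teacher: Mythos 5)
Your proposal is correct and follows essentially the same route as the paper: linearity of the mean functional gives $R_{i,j}=0$ so Assumption \ref{assump:meta-2} is trivial, Assumption \ref{assump:meta-1} is Wu's invariance principle under $\Theta_2^{(1)}<\infty$ with $\Sigma_F=\Gamma(g)$, and under the shift $h=g+\Delta\mu$ the centred summands coincide so Assumption \ref{assump:meta-power} holds with $W_1=W_2$ and $\Sigma_{F^{(1)}}=\Sigma_{F^{(2)}}$. Your explicit continuous-mapping step for the joint convergence merely spells out what the paper leaves implicit.
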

\medskip
\noindent
The finite sample properties of this test will be investigated in Section \ref{sec51}.

\subsection{Sequential testing for changes in the variance}\label{sec42}
In this section, we focus on detecting changes in the variance. Following \cite{Aue2009b}, who investigated this problem in the non-sequential case, we consider   a time series $\{X_t\}_{t\in \Z}$ with common mean $\mu = \mathbb{E}_F[X]$  and define the functional
\begin{align}\label{eq:varfunc}
V(F)
= \int_{\R^d} xx^\top dF(x) - \int_{\R^d} x dF(x) \int_{\R^d}x^\top dF(x)~.
\end{align}
A careful but straightforward calculation shows that the corresponding influence function is given by
\begin{align}\label{eq:infvar}
\begin{split}
\IF(x, F, V)
&= - \E_F[XX^\top] + 2\E_F[X]\E_F[X^\top] + xx^\top -\E_F[X]x^\top - x\E_F[X]^\top\\
&= (x - \E_F[X])(x- \E_F[X])^\top - V(F)~.
\end{split}
\end{align}
Hence the remainder term (under stationarity with $X_1 \sim F$) in equation \eqref{eq:meta-remainder} is given by
\begin{align}
R_{i,j}
&= V(\hat{F}_i^j) - V(F) - \dfrac{1}{j-i+1}\sum_{t=i}^j \IF(X_t, F, V)\nonumber\\
&= \int_{\R^d} xx^\top d\hat{F}_i^j(x) - \int_{\R^d} x d\hat{F}_i^j(x) \int_{\R^d}x^\top d\hat{F}_i^j(x)
- \dfrac{1}{j-i+1}\sum_{t=i}^j(X_i - \E[X_1])(X_i - \E[X_1])^\top\nonumber\\
&= - \int_{\R^d} x d\hat{F}_i^j(x) \int_{\R^d}x^\top d\hat{F}_i^j(x) + \int_{\R^d} x\E[X^\top_1] d\hat{F}_i^j(x)
+ \int_{\R^d} \E[X_1]x^\top d\hat{F}_i^j(x) - \E[X_1]\E[X_1^\top]\nonumber\\
&= - \bigg(\int_{\R^d} x- \E[X_1]d\hat{F}_i^j(x)\bigg)\bigg(\int_{\R^d} x^\top- \E[X^\top_1]d\hat{F}_i^j(x)\bigg)~. \label{exp:var}
\end{align}
Define $\vech(\cdot)$ to be the operator that stacks the columns of a symmetric $d \times d$-matrix above the diagonal as a vector of dimension $d(d+1)/2$.
As this operator is linear, it is obvious that expansion \eqref{exp:var} is equivalent to
\begin{align}\label{eq:vectorized}
\vech(R_{i,j})
&= \vech(V(\hat{F}_i^j)) - \vech(V) - \dfrac{1}{j-i+1}\sum_{t=i}^j \IF_v(X_t, F, V)~,
\end{align}
where $\IF_v$ is defined as
\begin{align} \label{eq:compsvar}
\IF_v(X_t, F, V)
&= \vech\big(\IF(X_t, F, V)\big)
= \IF\big(X_t, F, \vech(V)\big) \\
\nonumber  &  \\
\nonumber  &=
\begin{pmatrix}
(X_{t,1}-\E[X_{t,1}])^2 - \E\big[ (X_{t,1}-\E[X_{t,1}])^2\big]\\
(X_{t,1}-\E[X_{t,1}])(X_{t,2}-\E[X_{t,2}]) - \E\big[ (X_{t,1}-\E[X_{t,1}])(X_{t,2}-\E[X_{t,2}])\big]\\
(X_{t,2}-\E[X_{t,2}])^2 - \E\big[ (X_{t,2}-\E[X_{t,2}])^2\big]\\
(X_{t,1}-\E[X_{t,1}])(X_{t,3}-\E[X_{t,3}]) - \E\big[ (X_{t,1}-\E[X_{t,1}])(X_{t,3}-\E[X_{t,3}])\big]\\
(X_{t,2}-\E[X_{t,2}])(X_{t,3}-\E[X_{t,3}]) - \E\big[ (X_{t,2}-\E[X_{t,2}])(X_{t,3}-\E[X_{t,3}])\big]\\
(X_{t,3}-\E[X_{t,3}])^2 - \E\big[ (X_{t,3}-\E[X_{t,3}])^2\big]\\
\vdots
\end{pmatrix}~,
\end{align}
and  $X_{t,h}$ denotes the $h$-th component of the vector $X_t$.
We now provide sufficient conditions  such that the  general  theory in Section  \ref{sec3} is applicable for  the functional $\vech(V)$.
Assumption \ref{assump:meta-1} is satisfied if  the  time series $\{X_t\}_{t \in \Z}$ is stationary and the invariance principle
\begin{align}\label{conv:vardonsker}
\dfrac{1}{\sqrt{m}} \sum_{t=1}^{\floor{ms}} \IF_v(X_t, F, V)
\convd \sqrt{\Sigma_F} W(s)~,
\end{align}
holds in the space $\ell^{\infty}(\R,\R^{d^*})$,  where $W$ is a  $d^*=d(d+1)/2$-dimensional Brownian motion and $\Sigma_F$ is
defined in \eqref{eq:meta-covar} with $\IF = \IF_v$.
Invariance principles of the form \eqref{conv:vardonsker} are well known for many classes of weakly dependent time series.
The required assumptions for the underlying time series $\{X_t \}_{t \in \Z}$ are typically the same as for the mean - except for some extra moment conditions to cover the product structure of the random variables  in \eqref{eq:compsvar}.
Condition \eqref{eq:meta-remainderrate} in Assumption \ref{assump:meta-2}
reads as follows
\begin{align}\label{eq:remaindervar}
\sup_{1\leq i < j \leq n} \dfrac{1}{j-i+1}
\Big |\sum_{t=i}^j X_{t,k} - \E[X_{t,k}] \Big|
\Big|\sum_{t=i}^j X_{t,\ell} - \E[X_{t,\ell}] \Big|
= \op(n^{1/2})~
\end{align}
($1 \leq k,\ell \leq d^*$).
The validity of this assumption depends on the underlying dependence structure, in particular of the properties of the functions $g$ and $h$
in \eqref{eq:physicalsys}, and exemplarily we give sufficient conditions in the following result, which is proved in the online supplement.

\begin{Proposition}\label{prop:var}
Assume that \eqref{eq:physicalsys} holds with bounded functions $g$ and $h$ with 
$\delta_{t,4}^{(1)} = \mathcal{O}(\rho^t),~ \delta_{t,4}^{(2)} = \mathcal{O}(\rho^t)$
for some $\rho \in (0,1)$.
Let $\hat{\Sigma}_m$ denote a consistent estimator of the long-run variance $\Sigma_{F^{(1)}}$ (before the change) based on the observations $X_1,\dots,X_m$.
Further assume, that the covariance matrices $\Gamma(g)$ and $\Gamma(h)$ defined in \eqref{eq:ordcovar} are positive definite.
\begin{itemize}
\item[(a)]
If $g=h$, then the assumptions of Theorem \ref{cor:meta-lvl1} and \ref{cor:meta-lvl1SN} are satisfied
for the functional \eqref{eq:varfunc}.
In other words: The sequential tests for a change in the variance based on the statistics $\hatD$ or  $\hatDSN$ have asymptotic level $\alpha$.
\item[(b)]
If $ h =  A \cdot g$ for some non-singular matrix $A \in \R^{d \times d}$ with $A\cdot V(F^{(1)}) \cdot A^\top \neq V(F^{(1)})$, then the assumptions of Theorem \ref{thm:meta-power} and \ref{thm:meta-powerSN} are satisfied for the variance functional \eqref{eq:varfunc}.
In other words: The sequential tests for a change in the variance based on the statistics $\hatD$ or $\hatDSN$ are consistent.
\end{itemize}
\end{Proposition}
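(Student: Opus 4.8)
The plan is to verify the three structural conditions (Assumption \ref{assump:meta-1}, Assumption \ref{assump:meta-2}, and—for part (b)—Assumption \ref{assump:meta-power}) for the functional $\vech(V)$ and then simply invoke the already-established Theorems \ref{cor:meta-lvl1}, \ref{cor:meta-lvl1SN}, \ref{thm:meta-power} and \ref{thm:meta-powerSN}. Since $\vech$ is linear, all of the work reduces to the $d^*$-dimensional process $\{\IF_v(X_t,F,V)\}_t$ displayed in \eqref{eq:compsvar}, whose coordinates are centred products $(X_{t,k}-\E X_{t,k})(X_{t,\ell}-\E X_{t,\ell})$. The key observation is that these products inherit physical dependence from $\{X_t\}$: under boundedness of $g,h$ and the geometric decay $\delta^{(\ell)}_{t,4}=\mathcal O(\rho^t)$, one gets $\|(X_{t,k}-\E X_{t,k})(X_{t,\ell}-\E X_{t,\ell})\|_2$-type physical-dependence coefficients that are again $\mathcal O(\rho^t)$ (by Cauchy--Schwarz applied to the two factors of the coupling difference, using the $L^4$-bounds on the $X$'s and their couplings). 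Hence the process of second-order products is a bounded physical system with summable (indeed geometrically decaying) coefficients, and Wu's (2005) functional central limit theorem gives the invariance principle \eqref{conv:vardonsker} with long-run variance $\Sigma_F$ built from $\IF_v$; positive definiteness of the limit is what the assumption $\Gamma(g)$ (resp.\ $\Gamma(h)$) positive definite is there to guarantee. This establishes Assumption \ref{assump:meta-1}.

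Next I would check Assumption \ref{assump:meta-2}, i.e.\ the uniform remainder bound \eqref{eq:remaindervar}. From the exact algebraic identity \eqref{exp:var} the remainder factorises as a product of two averages of centred $X$-increments, so it suffices to show
\begin{align*}
\sup_{1\le i<j\le n}\frac{1}{\sqrt{j-i+1}}\Big|\sum_{t=i}^{j}(X_{t,k}-\E X_{t,k})\Big| = \mathcal O_{\mathbb P}\big(\sqrt{\log n}\,\big)
\end{align*}
for each coordinate $k$ (then the product of two such terms, each of order $j-i+1$ times an $\mathcal O_{\mathbb P}(\sqrt{\log n}/\sqrt{j-i+1})$ factor, gives $\sup_{i<j}\,|R_{i,j}|\cdot(j-i+1)=\mathcal O_{\mathbb P}(\log n)=\op(n^{1/2})$). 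Such a maximal inequality for partial sums of a bounded geometrically weakly dependent sequence is standard: one can use a Kolmogorov/Móricz-type maximal inequality together with the geometric decay of the physical-dependence coefficients, or reduce to a martingale approximation and apply Doob's inequality with a union bound over the $\mathcal O(n^2)$ pairs $(i,j)$, the moment bound on each block being uniform thanks to boundedness of $g$ and $h$. Under the null $g=h$ and the process is stationary, so this completes part (a).

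For part (b) I would verify Assumption \ref{assump:meta-power}. With $h=A g$ we have $Z_t(1)=g(\varepsilon_t,\varepsilon_{t-1},\dots)$, $Z_t(2)=A\,g(\varepsilon_t,\varepsilon_{t-1},\dots)$, so both regimes are bounded physical systems with geometric $L^4$ coefficients; the joint weak convergence of the two partial-sum processes of $\IF_v$ follows from the same FCLT applied to the stacked process (the two halves are built from overlapping innovations but weak convergence holds jointly in $\ell^\infty$, and independence of $W_1,W_2$ is explicitly not required per Remark \ref{rem38}(b)), with positive-definite limiting long-run variances again ensured by the positive definiteness of $\Gamma(g)$ and $\Gamma(h)$. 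Assumption \ref{assump:meta-2} for each phase is handled exactly as in part (a). Finally, the condition $A V(F^{(1)})A^\top\neq V(F^{(1)})$ is precisely $\theta(F^{(1)})=\vech(V(F^{(1)}))\neq\vech(A V(F^{(1)})A^\top)=\theta(F^{(2)})$, so the change in the functional is genuine. Invoking Theorems \ref{thm:meta-power} and \ref{thm:meta-powerSN} yields consistency, and $\hat\Sigma_m$ consistent for $\Sigma_{F^{(1)}}$ is assumed throughout, so Corollary \ref{cor:meta-lvl1} applies in part (a) as well.

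I expect the main obstacle to be the uniform remainder bound \eqref{eq:remaindervar}: getting a clean $\op(n^{1/2})$ rate uniformly over all $\mathcal O(n^2)$ index pairs $(i,j)$—rather than just for $i$ fixed—requires a genuine maximal inequality for partial sums of the dependent sequence, and one has to be slightly careful that the $\log n$-type loss from the union bound is still absorbed by the $n^{1/2}$ budget. The FCLT steps, by contrast, are essentially citations to \cite{Wu2005} once one has recorded that second-order products of a bounded geometrically dependent system are themselves geometrically dependent.
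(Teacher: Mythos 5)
Your overall architecture is the same as the paper's: treat the centred second-order products $\IF_v(X_t,F,V)$ as a new bounded physical system whose dependence coefficients are controlled through the $L^4$-coefficients by H\"older/Cauchy--Schwarz, invoke Wu's (2005) FCLT for Assumption \ref{assump:meta-1}, reduce Assumption \ref{assump:meta-2} to a uniform increment bound via the factorisation \eqref{exp:var}, and obtain part (b) by viewing the post-change phase as a linear image of the pre-change one (the paper makes your ``stacked FCLT'' rigorous by passing to the stationary process $\tilde X_t=A^{-1}h(\varepsilon_t,\dots)$ and a continuous mapping, using $\vech(AMA^\top)=A^{(v)}\vech(M)$, so that $\Sigma_{F^{(2)}}=A^{(v)}\Sigma_{F^{(1)}}(A^{(v)})^\top$ and $AV(F^{(1)})A^\top\neq V(F^{(1)})$ indeed gives $\theta(F^{(1)})\neq\theta(F^{(2)})$). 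Those parts of your plan are sound.

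The genuine gap is the uniform increment bound behind \eqref{eq:remaindervar}, which you yourself flag as the main obstacle but then dismiss as standard. You claim $\max_{i<j}|S_j-S_i|/\sqrt{j-i+1}=\mathcal O_{\mathbb P}(\sqrt{\log n})$ and propose to get it from Doob/M\'oricz/Kolmogorov-type maximal inequalities plus a union bound over the $\mathcal O(n^2)$ windows. A fixed-order moment bound cannot deliver this: if $\E|S_j-S_i|^{q}\lesssim (j-i)^{q/2}$, the per-window tail at level $\lambda\sqrt{j-i}$ is of order $\lambda^{-q}$, and with $\lambda\asymp\sqrt{\log n}$ the union bound is $n^{2}(\log n)^{-q/2}\to\infty$ for every fixed $q$; even the weaker bound $\op(n^{1/4})$ per factor --- which is all you actually need, since the product is then $\op(n^{1/2})$ --- requires $q>8$ together with a Rosenthal-type moment inequality valid under physical dependence, neither of which you state. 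Reaching $\sqrt{\log n}$ would require a genuinely exponential (Bernstein-type) inequality for bounded, geometrically weakly dependent sums, and this is precisely the delicate point: the paper avoids it by splitting into windows of length at most $\log^2 n$ (handled by boundedness alone) and longer windows, where it invokes a strong invariance principle of Wu (2011) with rate $n^{1/4}(\log n)^{3/2}$ to replace the partial sums by a Gaussian random walk and then applies Shao's (1995) theorem on increments of Gaussian processes; the remark following the proposition stresses that boundedness is essential because Erd\H{o}s--R\'enyi-type results for dependent variables are only known in that case. So, as written, your key step does not go through; it can be repaired either by the paper's strong-approximation route or by carefully proving a high-order ($q>8$) moment inequality and aiming only for $\op(n^{1/4})$ per factor. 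A smaller inaccuracy: positive definiteness of $\Gamma(g),\Gamma(h)$ concerns the long-run variance of $X_t$ itself and enters through the Gaussian approximation of the component partial sums; it does not imply that the long-run variance $\Sigma_{F^{(\ell)}}$ of the product process is non-singular (i.i.d.\ $\pm1$ coordinates give $\Gamma(g)>0$ while the corresponding coordinate of $\IF_v$ is degenerate), so your reading of what that hypothesis ``guarantees'' is not correct.
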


\begin{remark}
The assumption of bounded observations in Proposition \ref{prop:var} is crucial to prove  the estimate \eqref{eq:remaindervar}.
Essentially a proof of such a statement requires a version of Theorem 1 in \cite{Shao1995} for dependent random variables.
The main ingredient for a proof of Shao's result is an Erd\"os-Renyi-Law of large numbers in the case of dependent random variables, which - to the authors best knowledge - is only known for bounded random variables [see \cite{Kifer2017} for example].
On the other hand the assumption of bounded functions $g$ and $h$ in \eqref{eq:physicalsys} is not necessary for the functional \eqref{eq:varfunc} in the case of $M$-dependent time series.
\end{remark}

\subsection{Quantiles}
In this section we consider the quantile functional
\begin{align}\label{eq:funcquant}
\varphi_{\beta}(F) = F^{-}(\beta) := \inf \{ x \in \R\,|\, F(x) \geq \beta\}~,
\end{align}
where $\beta \in (0,1)$ is fixed and $F^{-}$ denotes the quantile function (or sometimes called generalized inverse function) for a distribution function $F$. 
Further recall the notation $\hatFij$ defined in \eqref{eq:Fij} for the empirical distribution function based on the subset of observations $X_i,\dots,X_j$.
For the sake of readability, we will simplify  the notation and denote the true and empirical quantiles by
\begin{align*}
q_{\beta} := \varphi_\beta(F)  = F^{-}(\beta)
\;\;\;
\text{and}
\;\;\;
\hatqijbeta :=\varphi_\beta(\hatFij) = (\hatFij)^{-}(\beta)~,
\end{align*}
respectively.
Considering a twice differentiable distribution function $F$ having derivative $f$ with $f(q_\beta)>0$, a straightforward but tedious calculation yields that the influence functional for $\varphi_{\beta}$ is given by
\begin{align*}
\IF(x, \varphi_{\beta},F) 
= \dfrac{\beta - I\{x \leq q_\beta\}}{f(q_\beta)}
=\begin{cases}
\dfrac{\beta - 1}{f(q_\beta)} \;\;\text{if}\;\; x \leq q_\beta~,\\[12pt]
\dfrac{\beta}{f(q_\beta)} \;\;\text{if}\;\; x > q_\beta~
\end{cases}
\end{align*}
[see, for example, \cite{Wasserman2010} for a proof of this statement under slightly stronger conditions].
This yields that the linearization in \eqref{eq:meta-remainder} for $\varphi_\beta$ is given by
\begin{align*}
\varphi_\beta(\hatFij) - \varphi_\beta(F)
= \hatqijbeta - q_{\beta}
&= \dfrac{1}{j-i+1}\sum_{t=i}^j \IF(X_t, F, \varphi_\beta) -  R_{i,j}\\
&= \dfrac{\beta - \hatFij(q_\beta)}{f(q_\beta)}  - R_{i,j}~,
\end{align*}
with remainder terms 
\begin{align*}
R_{i,j} = q_{\beta} - \hatqijbeta  - \dfrac{\beta - \hatFij(q_\beta)}{f(q_\beta)}~.
\end{align*}
\noindent The linearization stated above is also known as Bahadur expansion [see \cite{Bahadur1966}] and the investigation of the order of the remainder terms has been a major research topic.
In this section, we restrict ourselves to the case of independent observations, which makes the arguments less technical
[see the discussion in the online supplement].
Given independent and identically distributed observations, an application of Donsker's theorem immediately shows that Assumption \ref{assump:meta-1} is satisfied for $\varphi_\beta$, where it suffices to use the bound
\begin{align*}
| \IF(X_t, \theta_{\beta},F)|
= \bigg| \dfrac{\beta - I\{X_t \leq q_\beta\}}{f(q_\beta)}\bigg|
\leq 2/f(q_\beta)~.
\end{align*}
Establishing Assumption \ref{assump:meta-2} is substantially more complicated.
To the author's best knowledge uniform estimates for the remainder terms of the Bahadur expansion of the form \eqref{eq:meta-remainderrate} have not been investigated in the literature.
In the online supplement we prove the following  result, which is of independent interest.
\begin{theorem}\label{thm:quantremain}
Let $\{X_t\}_{t \in \Z}$ be a sequence of i.i.d. random variables with distribution function $F$, which is twice differentiable with 
\begin{align}\label{ineq:tail}
\sup_{x \in \mathbb{R}} \big(f(x) + |f'(x)|\big) < \infty
\;\;\;
\text{and}
\;\;\;
\Pb \big( |X_1| > x \big) \lesssim x^{-\lambda} 
\end{align}
for fixed $\lambda > 18/5$.
Further let $\beta \in (0,1)$ be fixed and assume that $f(q_{\beta})>0$.
It holds that
\begin{align*}
\dfrac{1}{\sqrt{n}} \max_{1\leq i < j \leq n} (j-i+1) | R_{i,j}|
= \dfrac{1}{\sqrt{n}} \max_{1\leq i < j \leq n} (j-i+1) \bigg| \hatqijbeta - q_\beta - \dfrac{\beta - \hatFij(q_\beta)}{f(q_\beta)} \bigg|
= \op(1)~.\\
\end{align*}
\end{theorem}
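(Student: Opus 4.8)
The plan is to begin with an exact (not asymptotic) decomposition of the remainder. Writing $N=j-i+1$, $\hat q=\hatqijbeta$ and $\tilde G_i^j(x)=\hatFij(x)-F(x)$, the identities $F(q_\beta)=\beta$, $0\le\hatFij(\hat q)-\beta\le 1/N$ and the Taylor expansion $F(\hat q)=\beta+f(q_\beta)(\hat q-q_\beta)+\tfrac12 f'(\xi)(\hat q-q_\beta)^2$ (for some $\xi$ between $q_\beta$ and $\hat q$) combine, after rearranging, into
\begin{align*}
R_{i,j}=\frac{\tilde G_i^j(q_\beta)-\tilde G_i^j(\hat q)}{f(q_\beta)}+\frac{\hatFij(\hat q)-\beta}{f(q_\beta)}-\frac{f'(\xi)}{2f(q_\beta)}\,(\hat q-q_\beta)^2 .
\end{align*}
Thus $N|R_{i,j}|$ is bounded, up to the constant $1/f(q_\beta)$, by the sum of (i) the empirical-process oscillation $|\sum_{t=i}^j\big((I\{X_t\le q_\beta\}-I\{X_t\le\hat q\})-(F(q_\beta)-F(\hat q))\big)|$, (ii) the trivial term $N(\hatFij(\hat q)-\beta)\le 1$, and (iii) the curvature term $\tfrac12\|f'\|_\infty N(\hat q-q_\beta)^2$. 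Dividing by $\sqrt n$, (ii) is $O(n^{-1/2})$, so only (i) and (iii) require uniform control over $1\le i<j\le n$.

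The next step is to split the maximum over blocks at a threshold $b_n$ with $\log n\ll b_n\ll n^{1/2-1/\lambda}$. For short blocks, $N\le b_n$, I would avoid the decomposition altogether: since $\hat q$ lies between the extreme order statistics of the block, $N|R_{i,j}|\le N|\hat q-q_\beta|+N|\beta-\hatFij(q_\beta)|/f(q_\beta)\le N\big(M_n+1/f(q_\beta)\big)$ with $M_n=\max_{1\le t\le n}|X_t|+|q_\beta|$, and the tail hypothesis $\Pb(|X_1|>x)\lesssim x^{-\lambda}$ gives $M_n=\op(n^{1/\lambda})$ by a union bound, so $\max_{N\le b_n}N|R_{i,j}|\lesssim b_nM_n=\op(\sqrt n)$. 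For long blocks, $N>b_n$, I would first localise $\hat q$: a Dvoretzky--Kiefer--Wolfowitz inequality for each block together with a union bound over the $O(n^2)$ blocks (using $b_n\gg\log n$) gives $\max_{N>b_n}\sup_x|\tilde G_i^j(x)|\lesssim\sqrt{\log n/N}$ with probability tending to one; in particular $|\hat q-q_\beta|\le\delta^*$ for a fixed $\delta^*$ small enough that $f$ is bounded away from $0$ on $[q_\beta-\delta^*,q_\beta+\delta^*]$, and then $|\hat q-q_\beta|\lesssim\sqrt{\log n/N}$, which disposes of the curvature term (iii) since $N(\hat q-q_\beta)^2\lesssim\log n=\op(\sqrt n)$ uniformly.

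What remains is the oscillation term (i) for long blocks. On the localisation event it suffices to bound
\begin{align*}
\max_{b_n<N\le n}\ \max_i\ \sup_{|x-q_\beta|\le\delta_N}\Big|\sum_{t=i}^{i+N-1}\big[(I\{X_t\le x\}-I\{X_t\le q_\beta\})-(F(x)-F(q_\beta))\big]\Big|,\qquad \delta_N\asymp\sqrt{\tfrac{\log n}{N}}.
\end{align*}
Here the summands are i.i.d., centred, bounded by $1$, with variance $\lesssim|F(x)-F(q_\beta)|\lesssim\delta_N$; Bernstein's inequality combined with a chaining/bracketing argument over $\{I\{\cdot\le x\}:|x-q_\beta|\le\delta_N\}$ (using monotonicity to control the residual between grid points) and a union bound over the $O(n^2)$ blocks yields a bound of order $\sqrt{N\delta_N\log n}\asymp N^{1/4}(\log n)^{3/4}$, hence $\le n^{1/4}(\log n)^{3/4}=\op(\sqrt n)$ uniformly. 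Adding the three contributions proves the theorem. The main obstacle is this last, fully uniform modulus-of-continuity estimate for the empirical process over all $O(n^2)$ sub-blocks with a block-length--dependent localisation radius $\delta_N$: the exponential inequality, the entropy bound and the two union bounds (over starting index and over block length) have to be run simultaneously, and the short-block regime, where $\hat q$ is controlled only through the heavy-tail bound on $\max_{t\le n}|X_t|$ rather than through the empirical process, must be glued to the long-block regime without a gap. A careful accounting of the logarithmic factors lost in the chaining, together with the requirement that the short-block and long-block thresholds be compatible, is precisely what forces the quantitative moment condition $\lambda>18/5$.
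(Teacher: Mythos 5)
Your decomposition and overall two-regime strategy are sound and coincide with the paper's skeleton: the same exact splitting of $N R_{i,j}$ into an empirical-process oscillation term, the almost-sure $|\hatFij(\hatqijbeta)-\beta|\le 1/N$ term, and a Taylor curvature term, plus a split of the blocks at a threshold below $n^{1/2-1/\lambda}$ with short blocks handled exactly as in the paper's Lemma \ref{lem:quantlem} (quantile trapped between extreme order statistics, heavy-tail bound on $\max_t|X_t|$). Where you genuinely diverge is the long-block regime. The paper localises $\hatqijbeta$ at a \emph{fixed} radius $b_{n,r}\asymp\sqrt{\log n}/n^{r/2}$ determined only by the minimal block length $n^r$ (Lemmas \ref{lem:EDFbound} and \ref{lem:quantas}, proved almost surely via Borel--Cantelli), controls the oscillation with the Shorack--Wellner inequality 14.0.9 (Lemmas \ref{lem:shorack} and \ref{lem:FFxy}), and then needs a second split of the long blocks at $n^\delta$ with the exponent bookkeeping $2/3<\delta<\tfrac34 r+1/2$ (Lemma \ref{lem:combined} and the treatment of the curvature term); it is precisely the resulting constraint $r>2/9$, combined with $r<1/2-1/\lambda$ from the short blocks, that produces $\lambda>18/5$. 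You instead use a block-length-adaptive radius $\delta_N\asymp\sqrt{\log n/N}$ (per-block DKW plus a union bound over the $O(n^2)$ blocks) and a Bernstein-plus-chaining modulus bound of order $N^{1/4}(\log n)^{3/4}$, which kills both the oscillation and the curvature terms uniformly in one stroke and needs no second split. This buys a cleaner argument and, in fact, a stronger result: your route only needs $\lambda>2$ (so that a gap $\log n\ll b_n\ll n^{1/2-1/\lambda}$ exists), so your closing claim that the chaining bookkeeping is what ``forces'' $\lambda>18/5$ is inaccurate as a description of your own proof --- that constant is an artefact of the paper's coarser, non-adaptive localisation --- though this causes no gap since the theorem's hypothesis is stronger than what you use. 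Two small points to tighten: the union bound gives $\max_t|X_t|=O_{\mathbb{P}}(n^{1/\lambda})$ rather than $\op(n^{1/\lambda})$, which is still enough because you take $b_n$ strictly of smaller order than $n^{1/2-1/\lambda}$; and the chaining step you flag as the main obstacle does need to be written out (grid of polynomial mesh on $[q_\beta-\delta_N,q_\beta+\delta_N]$, Bernstein with variance proxy $\lesssim\|f\|_\infty\delta_N$, monotonicity to handle points between grid nodes, and the triple union bound over grid, starting index and block length), but it is standard and the claimed rate is correct.
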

\noindent The assumption on the tails of the distribution of $|X_1|$ is crucial to control the error of the quantile estimators in case of small sample sizes, see Lemma \ref{lem:quantlem} in the online supplement.
The theorem above directly implies the following corollary.
\begin{corollary}\label{cor:quantiles}
Let $\{X_t\}_{t \in \Z}$ be a sequence of independent random variables and $F^{(1)}, F^{(2)}$ distributions functions fulfilling the assumptions of Theorem \ref{thm:quantremain}.
Assume that for a constant $1<c<T$
\begin{align}\label{eq:quantchange}
X_{t} \sim 
\begin{cases}
F^{(1)}\;\;\text{if}\;\; t < \floor{mc}~,\\
F^{(2)}\;\;\text{if}\;\; t \geq \floor{mc}~.
\end{cases}
\end{align}
Further let $\hat{\Sigma}_m$ denote a consistent estimator of the long-run variance $\Sigma_{F^{(1)}}$ (before a possible change) based on the observations $X_1,\dots,X_m$.
\begin{itemize}
\item[(a)]
If $F^{(1)} = F^{(2)}$, then the assumptions of Theorem \ref{cor:meta-lvl1} and \ref{cor:meta-lvl1SN} are satisfied for the functional \eqref{eq:funcquant}.
In other words: The sequential tests for a change in the $\beta$-quantile based on the statistics $\hatD$ or  $\hatDSN$ have asymptotic level $\alpha$.
\item[(b)]
If $\theta_{\beta}(F^{(1)}) \neq \theta_{\beta}(F^{(2)})$, then the assumptions of Theorem \ref{thm:meta-power} and \ref{thm:meta-powerSN} are satisfied for the functional \eqref{eq:funcquant}.
In other words: The sequential tests for a change in the $\beta$-quantile based on the statistics $\hatD$ or $\hatDSN$ are consistent.
\end{itemize}
\end{corollary}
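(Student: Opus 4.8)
The strategy is to check, in each of the two regimes, the abstract hypotheses of the theorems being invoked — Assumptions \ref{assump:meta-1} and \ref{assump:meta-2} for part (a), and Assumption \ref{assump:meta-power} for part (b) — and to note that the uniform Bahadur remainder bound required in each case is precisely the content of Theorem \ref{thm:quantremain}. Throughout, $p=1$ since $\varphi_\beta$ is scalar-valued. For part (a) put $F=F^{(1)}=F^{(2)}$ and $q_\beta=F^-(\beta)$. Since $F$ is differentiable with $f(q_\beta)>0$, the influence function $\IF(x,F,\varphi_\beta)=(\beta-I\{x\le q_\beta\})/f(q_\beta)$ is bounded by $2/f(q_\beta)$ and satisfies $\E[\IF(X_1,F,\varphi_\beta)]=(\beta-F(q_\beta))/f(q_\beta)=0$ by continuity of $F$ at $q_\beta$. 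For an i.i.d.\ sequence Donsker's invariance principle then yields the weak convergence in Assumption \ref{assump:meta-1} in $\ell^\infty([0,T+1],\R)$, with long-run variance reducing to $\Sigma_F=\Var\big(\IF(X_1,F,\varphi_\beta)\big)=\beta(1-\beta)/f(q_\beta)^2>0$, hence non-degenerate. Assumption \ref{assump:meta-2} is exactly the assertion $\tfrac{1}{\sqrt n}\max_{1\le i<j\le n}(j-i+1)|R_{i,j}|=\op(1)$ of Theorem \ref{thm:quantremain}, applied with $n$ a fixed multiple of $m$ covering the monitoring window $\{1,\dots,Tm\}$. Since $\hat\Sigma_m$ is consistent and non-singular by hypothesis, Theorems \ref{cor:meta-lvl1} and \ref{cor:meta-lvl1SN} apply and give the asymptotic level $\alpha$ statements for $\hatD$ and $\hatDSN$.

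For part (b) one takes $\{Z_t(1)\}_{t\in\Z}$ i.i.d.\ $\sim F^{(1)}$ and $\{Z_t(2)\}_{t\in\Z}$ i.i.d.\ $\sim F^{(2)}$, chosen mutually independent. Then the distributional identities \eqref{mix3a}--\eqref{mix3b} hold by the construction \eqref{eq:quantchange}, and $\theta(F^{(1)})=\varphi_\beta(F^{(1)})\neq\varphi_\beta(F^{(2)})=\theta(F^{(2)})$ by assumption. The joint weak convergence required in Assumption \ref{assump:meta-power} follows by applying Donsker's theorem separately to the two blocks (each with bounded influence function), producing limiting Brownian motions $W_1,W_2$; independence of $W_1$ and $W_2$ is harmless, since Assumption \ref{assump:meta-power} explicitly does not require independence (cf.\ Remark \ref{rem38}(b)). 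The matrices $\Sigma_{F^{(\ell)}}=\beta(1-\beta)/f_\ell\big(\varphi_\beta(F^{(\ell)})\big)^2$ are positive, using $f_\ell\big(\varphi_\beta(F^{(\ell)})\big)>0$ inherited from the hypotheses of Theorem \ref{thm:quantremain}. Finally, Assumption \ref{assump:meta-2} for each phase is again Theorem \ref{thm:quantremain}, invoked once with $F^{(1)}$ and once with $F^{(2)}$. Hence Theorems \ref{thm:meta-power} and \ref{thm:meta-powerSN} apply and yield consistency of the $\hatD$- and $\hatDSN$-based schemes.

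The only genuinely delicate ingredient here is the uniform Bahadur remainder estimate of Theorem \ref{thm:quantremain} itself, which is established separately in the online supplement; granting that result, the corollary reduces to matching the i.i.d.\ setup \eqref{eq:quantchange} to the abstract framework. The one point deserving a line of care is that the maximum in Assumption \ref{assump:meta-2} (and in the per-phase versions used inside Assumption \ref{assump:meta-power}) ranges over all sub-blocks of the monitoring window, so Theorem \ref{thm:quantremain} must be applied with sample size $n$ proportional to $m$ rather than with $n$ fixed — which is exactly the regime in which that theorem is stated.
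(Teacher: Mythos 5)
Your proposal is correct and takes essentially the same route as the paper, which presents the corollary as a direct consequence of Theorem \ref{thm:quantremain} together with the Donsker argument and bounded influence function already noted in Section 4.3: Assumption \ref{assump:meta-1} via Donsker's theorem with $\Sigma_F=\beta(1-\beta)/f(q_\beta)^2$, Assumption \ref{assump:meta-2} via the uniform Bahadur bound, and Assumption \ref{assump:meta-power} by applying these per phase. Your added remarks (the zero-mean check, positivity of the long-run variances, and applying Theorem \ref{thm:quantremain} with $n\asymp m$ over the full monitoring window) are exactly the details the paper leaves implicit.
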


\section{Finite sample properties}\label{sec5}
In this section, we investigate the finite sample properties of the new detection schemes  based on the statistics $\hat{D}$ and $\hat{D}^{\rm SN}$ in \eqref{genstat} and \eqref{genstatSN} and also provide a comparison to the detection schemes based on $\hat{Q}$, $\hat{P}$ and $\hat{P}^{\rm SN}$,
which are defined in \eqref{scheme:wied}, \eqref{scheme:kirch} and \eqref{scheme:kirchsn}, respectively.
For the choice of the threshold function we follow the ideas of \cite{Horvath2004}, \cite{Aue2009} and \cite{Wied2013} and consider the parametric family
\begin{align}\label{eq:thgamma}
w(t) = (t+1)^2 \cdot \max \bigg\{ \bigg(\dfrac{t}{t+1} \bigg)^{2\gamma} , \delta \bigg\}~,
\end{align}
where the parameter $\gamma$ varies in  the interval $[0,1/2)$ and $\delta>0$ is a small constant introduced to avoid problems in the denominator of the ratio  considered in \eqref{ineq:threshold}.
For the statistic  $\hat{Q}$ these threshold functions are motivated by the law of  iterated logarithm and are used to reduce the stopping delay under the alternative hypothesis [see \cite{Aue2009} or \cite{Wied2013}].

Note that we use the squared versions of the thresholds from the  cited references, since we consider statistics in terms of quadratic forms. To be precise consider three different threshold functions
\begin{enumerate}[(T1)]
\item $w(t) = c_{\alpha}$~,
\item $w(t) = c_{\alpha} (t+1)^2$~,
\item $w(t) = c_{\alpha} (t+1)^2 \cdot \max \Big\{ \Big(\dfrac{t}{t+1} \Big)^{1/2} , 10^{-10} \Big\}$~,
\end{enumerate}
where the constant $c_{\alpha}$ is chosen by Monte-Carlo simulations, such that
\begin{align}\label{eq:calpha}
\Pb \bigg( \sup_{t \in [1,T+1]} \dfrac{L(t)}{w(t-1)}  > 1 \bigg) = \alpha~.
\end{align}
Here $L$ denotes the limit process corresponding to the test statistic under consideration.
The reader should be aware that occasionally in the literature, authors define the threshold function without $c_{\alpha}$ and then name the product $c_\alpha \cdot w(t)$ the critical curve (for test level $\alpha$).

For the estimation of the long-run variance of a one-dimensional time series we use the well-known quadratic spectral kernel [see \cite{Andrews1991}], that is
\begin{align*}
\hat{\sigma}^2
= \hat{\gamma}_0 + 2\sum_{i=1}^{m-1} k \Big(\dfrac{i}{b_m} \Big) \hat{\gamma}_i~,
\end{align*}
where $\hat{\gamma}_i$ denotes the empirical lag $i$ autocovariance of $X_1,\dots,X_m$ and $b_m$ is the bandwidth for the underlying kernel $k$ given by
\begin{align*}
k(x) = \dfrac{25}{12\pi^2x^2} \bigg( \dfrac{ \sin(6\pi x/5)}{6\pi x/5} - \cos(6\pi x/5) \bigg)~.
\end{align*}
For multivariate data, we use its canonical extension replacing the estimated autocovariances $\hat{\gamma}$ by its corresponding multivariate counterparts. 
In our simulations we use the implementation of this estimator contained in the R-package 'sandwich' [see \cite{Zeileis2004}].
As mentioned before, we only use the data from the stable subset $X_1,\dots,X_m$ for the long-run variance estimate, while the bandwidth is chosen as $b_m = \log_{10}(m)$, which corresponds to the rule proposed in \cite{Aue2009b}.
Note that the long-run variance estimator is the same for the non self-normalized procedures $\hat{D}, \hat{P}, \hat{Q}$.
For the sake of brevity, we will only display situations where the parameter $T$ is fixed as $T=1$, i.e. the monitoring period will always have the same size as the historical data set.
Further the change will always occur at the center of the monitoring period, which is $k^* = m/2$.
For the case $T\geq 2$ and other change locations we obtained a similar picture.
The results can be found in Section \ref{addsim} of the online supplement.
All results that are presented here and in the online supplement are based on 5000 independent simulation runs.

\subsection{Changes in the mean}\label{sec51}

For the analysis of the new procedures in the problem of detecting changes in the mean we look at independent data, a MA(2)- and two AR(1)-processes, defined by
\begin{enumerate}[(M1)]
\item $X_{t} \sim \varepsilon_t$~,
\item $X_{t} = 0.1X_{t-1} + \varepsilon_t$~,
\item $X_{t} = \varepsilon_t + 0.3\varepsilon_{t-1} - 0.1\varepsilon_{t-2}$~,
\item $X_t = 0.3X_{t-1} + e_t$~,
\end{enumerate}
where $\{\varepsilon_t\}_{t\in \mathbb{Z}}$ and $\{e_t\}_{t\in \mathbb{Z}}$ are sequences of independent standard Gaussian and $\exp(1)$ distributed random variables, respectively.
In the case of the alternative hypothesis we consider the sequence
$$
X_t^\mu =
\begin{cases}
\;\;X_t  &\mbox{if }~ t < m + \lfloor {m \over 2} \rfloor \\
X_t + \mu & \mbox{if }~ t \geq  m + \lfloor {m \over 2} \rfloor
\end{cases}
$$
for various values of $\mu$.
For all discussed detection schemes the empirical rejection probabilities for the  models (M1) - (M4) and threshold functions (T1) - (T3) are shown in Figure \ref{fig:2} and \ref{fig:3} corresponding to the choice $m=50$ and $m=100$ as initial sample size.
The results can be summarized as follows.
The statistic $\hat{D}$ outperforms $\hat{P}$ and $\hat{Q}$ with respect to the power for all combinations of the  model and threshold function.
Further the statistic $\hat{P}$ shows a better performance with respect to power as $\hat{Q}$ in all cases under consideration.
For example, the plot in the left-upper corner of Figure \ref{fig:2} shows, that $\hat{D}$ already has empirical power close to $1$ ($0.95$) for a change of
size $1$, while $\hat{P}$ and $\hat{Q}$ have only empirical power of $0.84$ and $0.71$, respectively.
This relation is basically the same in all plots contained in Figures \ref{fig:2} and \ref{fig:3}.

In Figure \ref{fig:rejTime} we present the average rejection time of the different procedures as a function of the size of the  change for the  different choices  (T1) - (T4) of the threshold function under model (M1).
For its computation we ignore runs without a rejection and with a rejection before the actual change.
The results basically show that the rejections occur earlier when using threshold (T3) or (T2) instead of the constant threshold (T1).
The threshold (T3) also yields slightly earlier rejections compared to threshold (T2).
This effect may be caused by a value  of $\gamma$ in the parametric family in \eqref{eq:thgamma} ($\gamma=0$ and $\gamma=0.25$), which corresponds to the observations made by \cite{Horvath2004} or \cite{Wied2013}, where a more detailed discussion is given.
 Basically, the plots illustrate a decreasing rejection with an increasing size of  the change provided that this is larger than $0.5$. 
This corresponds to intuition.
The slight decrease in the empirical time of rejection for small values of $\mu$ can be explained by the fact that this case is close to the null hypotheses.
As a consequence the rejection times are more uniformly distributed, with a greater portion close to the time $m+k^*$.

For the sake of brevity we will only consider the constant threshold (T1) in the remaining part of this paper (note also that the
results in Figure \ref{fig:2} and \ref{fig:3} show no substantial differences between the different threshold functions).
In Figure \ref{fig:4} we display the power of the tests based on the self-normalized statistics $\hat{D}^{\rm SN}$ and $\hat{P}^{\rm SN}$ and non self-normalized statistics $\hat{D}$ and $\hat{P}$.
The results are similar as before and the empirical power obtained by the use of $\hat{D}^{\rm SN}$ is considerably higher than those of $\hat{P}^{\rm SN}$.
Further a comparison between the results of $\hat{D}^{\rm SN}$ and $\hat{P}^{\rm SN}$ to those of $\hat{D}$ and $\hat{P}$ indicates, that self-normalization yields a substantial loss of power in the sequential detection schemes.

On the other hand the approximation of the nominal level is more stable with respect to different dependence structures for self-normalized methods.
To illustrate this fact we display in Table \ref{table:1} the type-I error for all five sequential monitoring schemes based on the statistics  $\hat D$, $\hat P$ and $\hat Q$.
The results provide some empirical evidence that the self-normalized statistics yield a more stable approximation of the nominal level.
In particular for model (M4), which has a stronger dependence structure, the approximation of the self-normalized methods is clearly superior.
This effect is even more visible for stronger dependencies (these results are not displayed for the sake of brevity).

As pointed out by a reviewer, the method $\hat{D}$ and the self-normalized methods $\hat{D}^{SN}, \hat{P}^{SN}$ should require higher computational effort.
To offer a comparison of the computational complexity, we illustrate the run time of the different procedures in Table \ref{tab:runtime}, where we display the CPU-time of each procedure for for one run.
We observe that the computation of the statistic $\hat{D}$ is more expensive than those of $\hat{P}$ since it requires updating $\hat \theta_1^{m+j}$ and $\hat \theta_{m+j+1}^{m+k}$ for all possible change positions $j \in \{0,\dots,k-1\}$ when a new observation $X_{m+k+1}$ arrives.
In contrast to this, $\hat P$ only requires renewing $\hat \theta_{m+j+1}^{m+k}$ (for all possible choices of $j$) while the historical estimate $\hat \theta_1^m$ is fixed.
Note that the procedure based on $\hat Q$ is the fastest procedure since there is no maximum included and only the estimation $\hat \theta_m^{m+k}$ has to be updated when moving to the next observation $X_{m+k+1}$.
However, one has to keep in mind that the advantage of $\hat Q$ and $\hat P$ over $\hat D$ 
with respect to computation time comes with the price of  power inferiority described above.
As expected, the computation time for the self-normalized procedures $\hat P$ and $\hat D$ is larger, as the process $\mathbb{V}$ defined in \eqref{eq:meta-snprocess} is of a more complicated structure.
However, all procedures are quite fast, such that the observed time disparities might have an impact in a large simulation study but will not be of importance in practice when a data set has to be evaluated.

\begin{figure}[H]
\begin{tabular}{b{1cm}ccc}
& (T1) & (T2) & (T3)\\
(M1) \tabj &
\includegraphics[width=4.5cm,height=3.5cm]{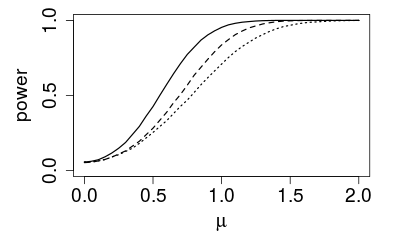} &
\includegraphics[width=4.5cm,height=3.5cm]{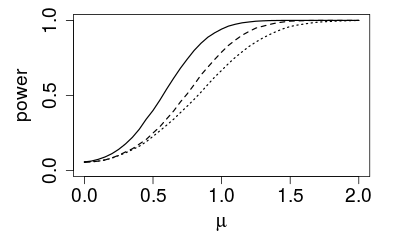} &
\includegraphics[width=4.5cm,height=3.5cm]{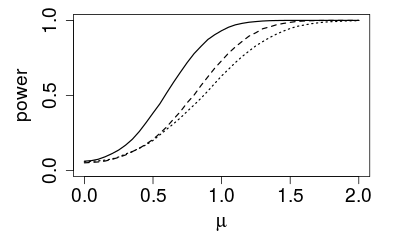} \vspace{-0.3cm}
\\
(M2) \tabj &
\includegraphics[width=4.5cm,height=3.5cm]{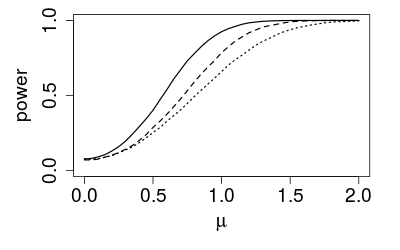} &
\includegraphics[width=4.5cm,height=3.5cm]{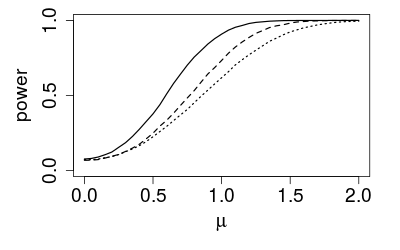} &
\includegraphics[width=4.5cm,height=3.5cm]{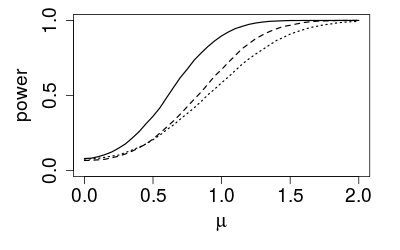} \vspace{-0.3cm}
\\
(M3) \tabj &
\includegraphics[width=4.5cm,height=3.5cm]{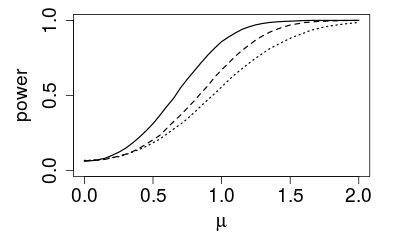} &
\includegraphics[width=4.5cm,height=3.5cm]{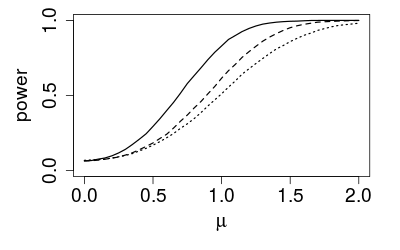} &
\includegraphics[width=4.5cm,height=3.5cm]{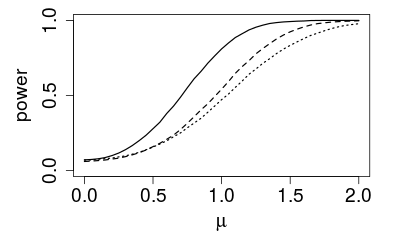} \vspace{-0.3cm}
\\
(M4) \tabj &
\includegraphics[width=4.5cm,height=3.5cm]{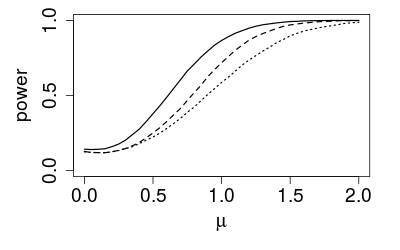} &
\includegraphics[width=4.5cm,height=3.5cm]{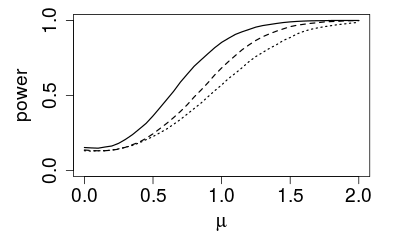} &
\includegraphics[width=4.5cm,height=3.5cm]{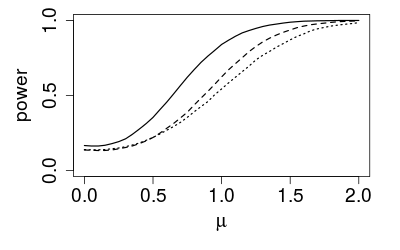} \vspace{-0.3cm}
\end{tabular}
\caption{\it Empirical rejection probabilities of the sequential tests for a change in the mean
based on the statistics $\hat{D}$ (solid line), $\hat{P}$ (dashed line) , $\hat{Q}$ (dotted line).
The initial and total sample size are $m=50$ and $m(T+1)=100$, respectively, and
the change occurs at observation $75$.
The level is $\alpha=0.05$.
Different rows correspond to different models, while different columns correspond to different threshold functions.\label{fig:2}}
\end{figure}

\begin{figure}[H]
\begin{tabular}{b{1cm}ccc}
& (T1) & (T2) & (T3)\\
(M1) \tabj &
\includegraphics[width=4.5cm,height=3.5cm]{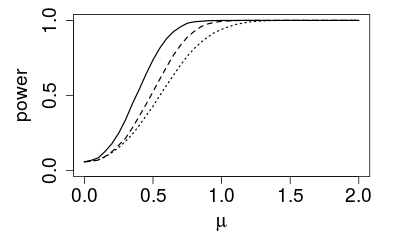} &
\includegraphics[width=4.5cm,height=3.5cm]{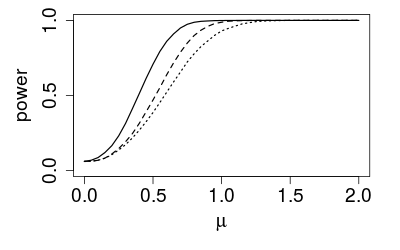} &
\includegraphics[width=4.5cm,height=3.5cm]{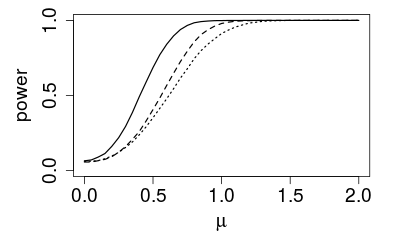} \vspace{-0.3cm}
\\
(M2) \tabj &
\includegraphics[width=4.5cm,height=3.5cm]{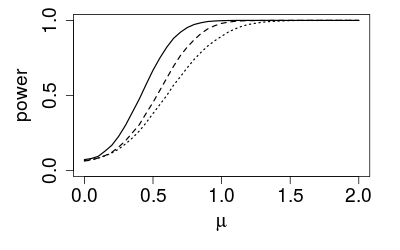} &
\includegraphics[width=4.5cm,height=3.5cm]{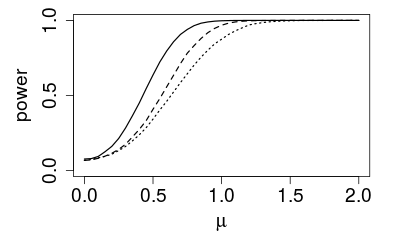} &
\includegraphics[width=4.5cm,height=3.5cm]{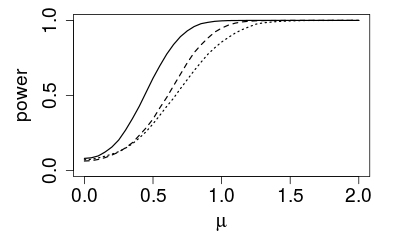} \vspace{-0.3cm}
\\
(M3) \tabj &
\includegraphics[width=4.5cm,height=3.5cm]{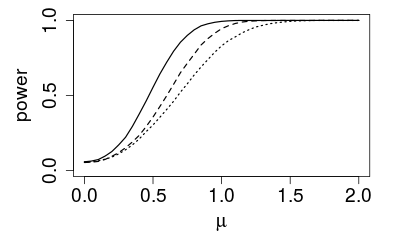} &
\includegraphics[width=4.5cm,height=3.5cm]{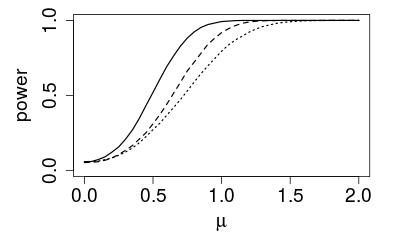} &
\includegraphics[width=4.5cm,height=3.5cm]{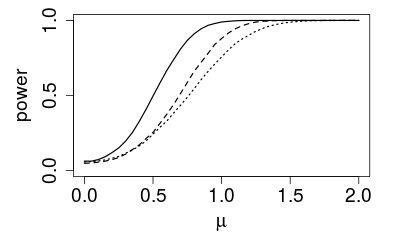} \vspace{-0.3cm}
\\
(M4) \tabj &
\includegraphics[width=4.5cm,height=3.5cm]{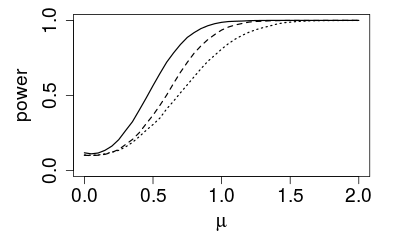} &
\includegraphics[width=4.5cm,height=3.5cm]{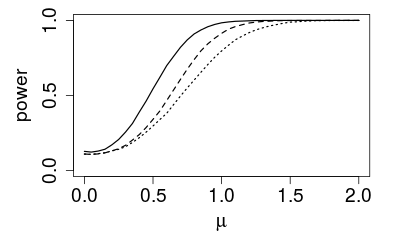} &
\includegraphics[width=4.5cm,height=3.5cm]{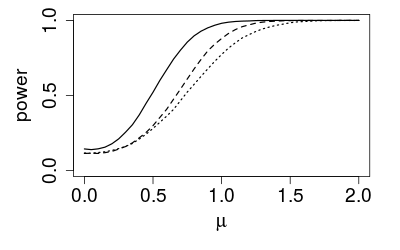} \vspace{-0.3cm}
\end{tabular}
\caption{\it Empirical rejection probabilities of the sequential tests
for a change in the mean based on the statistics $\hat{D}$ (solid line), $\hat{P}$ (dashed line), $\hat{Q}$ (dotted line).
The initial and total sample size are $m=100$ and $m(T+1)=200$, respectively, and
the change occurs at observation $150$.
The level is $\alpha=0.05$.
Different rows correspond to different models, while different columns correspond to different threshold functions.
\label{fig:3}}
\end{figure}

\begin{figure}[H]
\begin{tabular}{b{1.54cm}ccc}
& (T1) & (T2) & (T3)\\
$m=50$ $T=1$ \tabj &
\includegraphics[width=4.5cm,height=3.5cm]{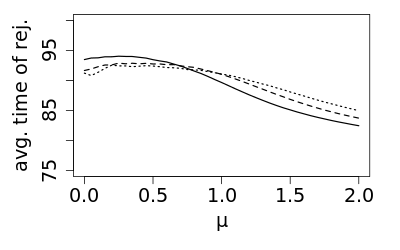} &
\includegraphics[width=4.5cm,height=3.5cm]{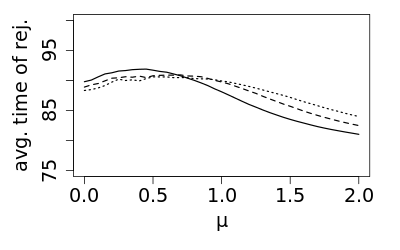} &
\includegraphics[width=4.5cm,height=3.5cm]{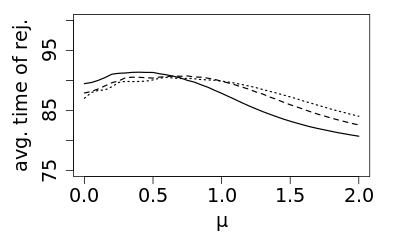} \\
$m=100$ $T=1$ \tabj &
\includegraphics[width=4.5cm,height=3.5cm]{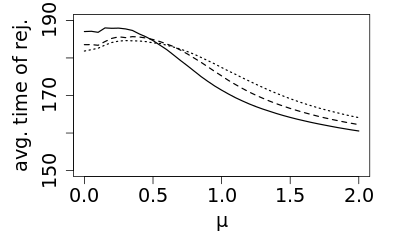} &
\includegraphics[width=4.5cm,height=3.5cm]{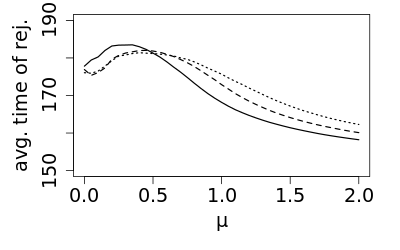} &
\includegraphics[width=4.5cm,height=3.5cm]{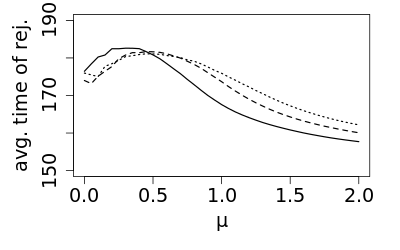} \\
\end{tabular}
\caption{\it Empirical time of rejection of the sequential tests for a change in the mean based on the statistics $\hat{D}$ (solid line), $\hat{P}$ (dashed line), $\hat{Q}$ (dotted line).
The model is chosen as (M1). 
The change occurs at observation $75$ (first row) and $150$ (second row), respectively.
The level is $\alpha=0.05$.
Different columns correspond to different threshold functions.
\label{fig:rejTime}}
\end{figure}

\begin{figure}[H]
\centering
\begin{tabular}{b{1cm}ccc}
& $m = 50,\;\;T=1$ & $m=100,\;\;T=1$\\
(M1) \tabj &
\includegraphics[width=4.5cm,height=3.5cm]{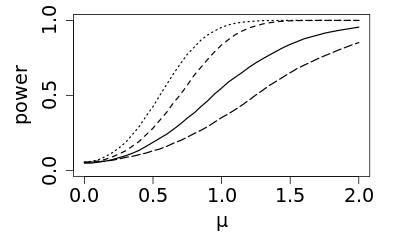} &
\includegraphics[width=4.5cm,height=3.5cm]{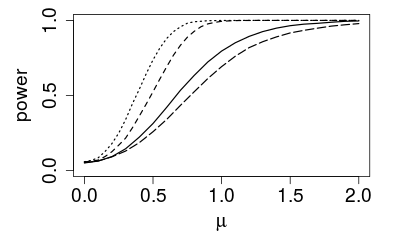}
\\
(M2) \tabj &
\includegraphics[width=4.5cm,height=3.5cm]{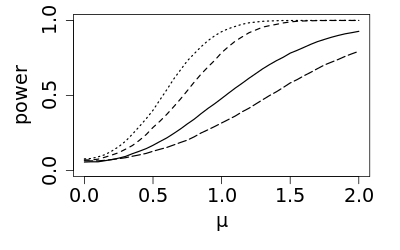} &
\includegraphics[width=4.5cm,height=3.5cm]{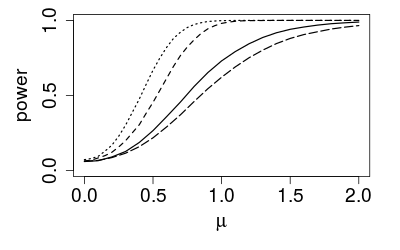}
\\
(M3) \tabj &
\includegraphics[width=4.5cm,height=3.5cm]{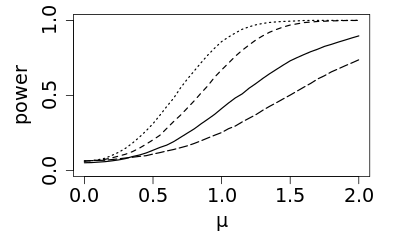} &
\includegraphics[width=4.5cm,height=3.5cm]{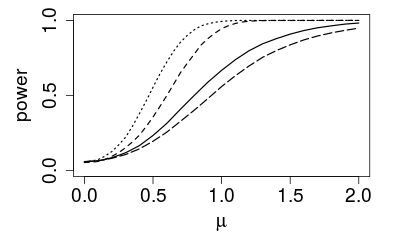}
\\
(M4) \tabj &
\includegraphics[width=4.5cm,height=3.5cm]{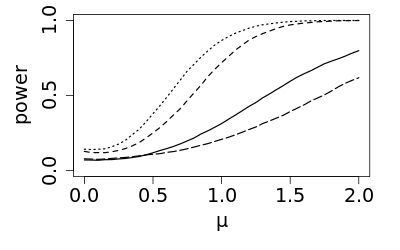} &
\includegraphics[width=4.5cm,height=3.5cm]{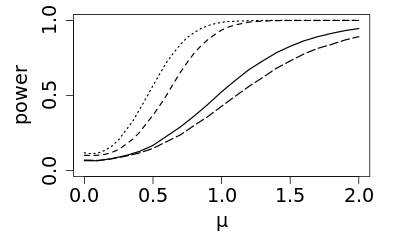}
\\
\end{tabular}
\caption{\it Empirical rejection probabilities of the sequential tests for a change in the mean based on the self-normalized statistics $\hat{D}^{\rm SN}$ (solid line), $\hat{P}^{\rm SN}$ (long dashed line) compared to the non self-normalized statistics $\hat{D}$ (dotted line) and $\hat{P}$ (dashed lined). 
The initial and total sample size are $m=50$ and $m(T+1)=100$ (upper panel, change at observation $75$) and $m=100$ and $m(T+1)=200$ (lower panel, change at observation $150$).
The level is $\alpha=0.05$ and the threshold function is given by {\rm (T1)}.
\label{fig:4}}
\end{figure}

\begin{table}[H]
\centering
\begin{tabular}{|c|c|c|c|c|c|c|c|}
\hline
$m$ & model \textbackslash \;statistic & $\hat{D}$ & $\hat{P}$ & $\hat{Q}$ & $\hat{D}^{\rm SN}$ & $\hat{P}^{\rm SN}$ \\
\hline
\multirow{3}{*}{$50$}
& (M1) &  5.6\% &  5.3\% &  5.8\% & 5.0\% & 5.7\%\\
\cline{2-7}
& (M2) &  7.8\% &  7.1\% &  7.6\% & 5.7\% & 6.5\%\\
\cline{2-7}
& (M3) &  6.4\% &  6.3\% &  6.8\% & 5.2\% & 6.3\%\\
\cline{2-7}
& (M4) & 14.2\% & 12.8\% & 12.5\% & 7.0\% & 7.8\%\\
\hline
\hline
\multirow{3}{*}{$100$}
& (M1) &  5.9\% &  5.8\% &  5.9\% & 5.1\% & 5.7\%\\
\cline{2-7}
& (M2) &  7.3\% &  6.4\% &  6.6\% & 6.0\% & 6.4\%\\
\cline{2-7}
& (M3) &  5.8\% &  5.4\% &  5.7\% & 5.9\% & 6.1\%\\
\cline{2-7}
& (M4) & 11.8\% & 10.2\% & 10.3\% & 6.6\% & 7.1\%\\
\hline
\end{tabular}
\caption{\it Simulated type I error (level $\alpha=0.05$) of the sequential tests for a change in the mean based on the statistics $\hat{D}$, $\hat{P}$, $\hat{Q}$, $\hat{D}^{SN}$ and $\hat{P}^{SN}$.
The threshold function is (T1) and the factor $T$ is again set to $T=1$.\label{table:1}}
\end{table}

\begin{table}[H]
\centering
\begin{tabular}{|c|c|c|c|}
\hline 
$\hat{D}$ & 0.0112 sec & $\hat{D}^{SN}$ & 0.3194 sec\\ 
\hline 
$\hat{P}$ & 0.0080 sec & $\hat{P}^{SN}$ & 0.3173 sec\\ 
\hline 
$\hat{Q}$ & 0.0075 sec & &\\ 
\hline 
\end{tabular} 
\caption{\it Computation time for one simulation run of the procedures $\hat D, \hat P, \hat Q, \hat{D}^{SN}, \hat{P}^{SN}$ 
The scenario is the same as in the left column of Figure \ref{fig:4}.\label{tab:runtime}}
\end{table}

\subsection{Changes in the variance}\label{sec52}
In this subsection we present a small simulation study investigating the performance of the detection schemes for a change in the variance matrix.
We consider the following models
\begin{enumerate}[(V1)]
\item $X_{t} = \varepsilon_{t}$~,
\item $X_{t} = A_1X_{t-1} + \varepsilon_t$~,
\item $X_{t} = \varepsilon_t + A_2\varepsilon_{t-1} + A_3\varepsilon_{t-2}$~,
\item $X_{t} = A_4X_{t-1} + \varepsilon_{t}$~,
\end{enumerate}
where $\{\varepsilon_t\}_{t \in \mathbb{Z}}=\{(\varepsilon_{t,1},\dots,\varepsilon_{t,d})^\top\}_{t \in \mathbb{Z}}$ denotes an i.i.d. sequence of centered $d$-dimensional Gaussian distributed random variables $d$ is chosen accordingly to the dimension of the involved matrices, which are defined by
\begin{align}\label{eq:matrixsim}
\begin{split}
A_1 &=  \begin{pmatrix}
0.2 & 0.1 \\
0.1 & 0.2
\end{pmatrix},\quad
A_2 =  \begin{pmatrix}
0.3 & 0.1 \\
0.1 & 0.3
\end{pmatrix},\quad
A_3  = \begin{pmatrix}
0.1  & 0.05 \\
0.05 & 0.1
\end{pmatrix}~,\\[10pt]
A_4 &= \begin{pmatrix}
0.1  & 0.05 & 0.05\\
0.05 & 0.1  & 0.05\\
0.05 & 0.05 & 0.1
\end{pmatrix}~.
\end{split}
\end{align}
For the alternative, we proceed similarly as in \cite{Aue2009b} and define
\begin{align}\label{eq:altvar}
\Cov(\varepsilon_t,\varepsilon_t)
= \varepsilon_t\cdot \varepsilon_t^\top
= \begin{cases}
I_d\; &\text{if}\; t \leq m + \lfloor {m \over 2} \rfloor~\\
I_d + \delta \cdot I_d\;&\text{if}\; t > m + \lfloor {m \over 2} \rfloor
\end{cases}~,
\end{align}
where $I_d$ denotes the $d$-dimensional identity matrix (the case $\delta=0$ corresponds to the null hypothesis of no change).
For the sake of brevity we will focus on the non-self-normalized statistics $\hat{D}$, $\hat{P}$ and $\hat{Q}$ here.
In Figure \ref{fig:6} we display the empirical power for the three data generating processes and the threshold function (T1).
The results are similar to those presented in Section \ref{sec51}.
The test based on the statistic $\hat{D}$ is more powerful than the tests based on $\hat P$ and $\hat Q$.
It should be mentioned that the approximation of the nominal level is less accurate  for  model (V4).
\begin{figure}
\centering
\begin{tabular}{b{1cm}ccc}
& (V1) & (V2) \\
(T1) \tabj &
\includegraphics[width=4.5cm,height=3.5cm]{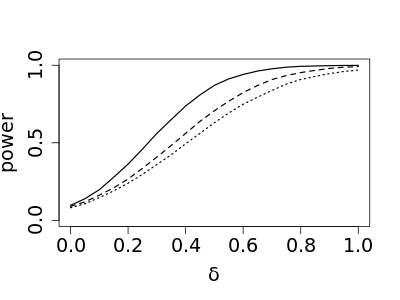} &
\includegraphics[width=4.5cm,height=3.5cm]{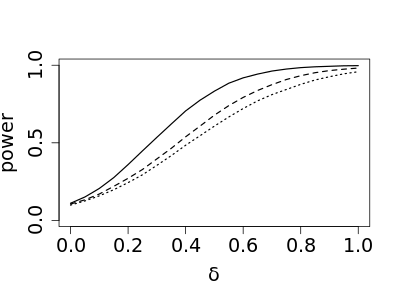} \\
& (V3) & (V4) \\
(T1) \tabj & 
\includegraphics[width=4.5cm,height=3.5cm]{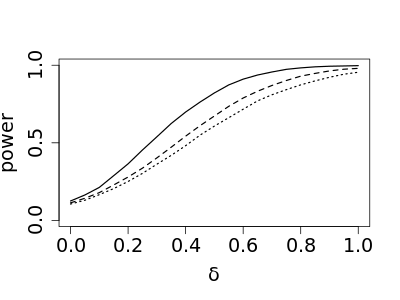} & 
\includegraphics[width=4.5cm,height=3.5cm]{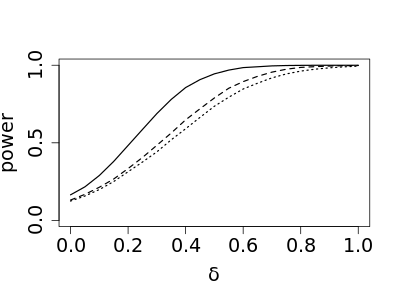} \\
\end{tabular}
\caption{\it Empirical rejection probabilities of the sequential tests for a change in the variance matrix based on the statistics $\hat{D}$ (solid line), $\hat{P}$ (dashed line), $\hat{Q}$ (dotted line).
The initial and total sample size are $m=200$ and $m(T+1)=400$, respectively, and the change occurs at observation $300$.
The level is $\alpha=0.05$.
\label{fig:6} }
\end{figure}

\subsection{Changes in the correlation} \label{sec53}
We conclude this paper with a brief empirical comparison of the three methods for the detection of a change in the correlation, which has been considered in \cite{Wied2013}.
For the sake of brevity we do not provide a detailed proof that the assumptions of Section \ref{sec3} are satsified, but restrict ourselves to the numerical comparison.
For the definition of the data generating processes, we use the models (V1) - (V3) introduced in Section \ref{sec52} but with a different process $\{\varepsilon_j\}_{j \in \mathbb{Z}} = \{ (\varepsilon_{j,1}, \varepsilon_{j,2})^\top\}_{j \in \mathbb{Z}}$.
In this section $\{\varepsilon_j\}_{j \in \mathbb{Z}}$ is a sequence of independent two-dimensional Gaussian random variables such that
\begin{align*}
\Cor(\varepsilon_{j,1}, \varepsilon_{j,2})
= \begin{cases}
c_1 \quad \text{if} \quad j \leq m + \lfloor {m \over 2} \rfloor~,\\
c_2 \quad \text{if} \quad j > m + \lfloor {m \over 2} \rfloor
\end{cases}
\end{align*}
and $\Var(\epsilon_{j,1}) = \Var(\epsilon_{j,2}) = 1$.
We use $c_1=0.3$ for the correlation before the change and consider different values of $c_2$.
For estimation of the long-run variance matrix we use the estimator proposed in \cite{Wied2013} (the explicit formula for the estimator is given in the appendix of the referenced paper and omitted here for the sake of brevity).
Figure \ref{fig:corr} now compares the power of the non-self-normalized methods for the three models defined above.
As in the previous sections the sequential detection scheme based on $\hat{D}$ yields substantially better results.

\begin{figure}
\begin{tabular}{b{1cm}ccc}
& (V1) & (V2) & (V3)\\
(T1) \tabj &
\includegraphics[width=4.5cm,height=3.5cm]{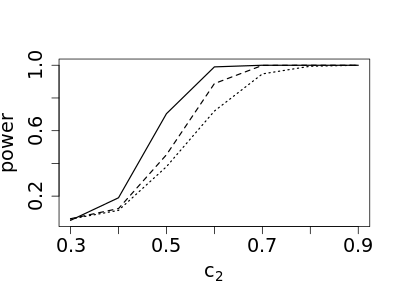} &
\includegraphics[width=4.5cm,height=3.5cm]{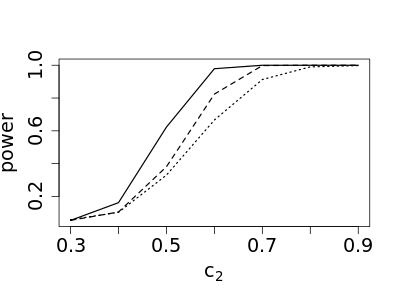} &
\includegraphics[width=4.5cm,height=3.5cm]{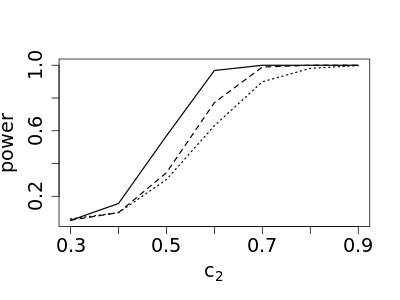} \\
\end{tabular}
\caption{\it Empirical rejection probabilities of the sequential tests for a change in the correlation based on the statistics $\hat{D}$ (solid line), $\hat{P}$ (dashed line) , $\hat{Q}$ (dotted line).
The initial and total sample size are $m=500$ and $n=1000$, respectively, and the change occurs at observation $750$. The level is $\alpha=0.05$.\label{fig:corr} }
\end{figure}

\bigskip
\noindent

\subsection{Data example} \label{data}
In this section we provide a small data example to illustrate potential applications  of the new method based on the statistic $\hat{D}$.
For this purpose we consider the log-returns of the NASDAQ Composite Index and the Sandard \& Poor's 500 Index in the period from 1997-01-02 to 2002-12-31 (rise and burst of dot-com bubble) and investigate this data for potential changes in the covariance matrix. 
The log-returns and corresponding prices are shown in Figure \ref{fig:realData}.\\
If the monitoring rejects the null hypothesis of a stable covariance matrix (at time $k$), we directly obtain an estimate of the change point location via the formula:
\begin{align}\label{eq:posestimate}
\hat{\ell} =m+ \argmax_{j=0}^{k-1}
(m+j)^2(k-j)^2
\big( \hat{V}_{1}^{m+j} - \hat{V}_{m+j+1}^{m+k}\big)^\top
\hat \Sigma_m^{-1}
\big( \hat{V}_{1}^{m+j} - \hat{V}_{m+j+1}^{m+k} \big) ~,
\end{align}
where $\hat{V}_i^j := \vech(V(\hatFij))$ is the vectorized covariance functional [see equation \eqref{eq:varfunc}].
Note that \eqref{eq:posestimate} can be directly derived from formula \eqref{genstat}, where we use the constant threshold function.
To verify stability of the historical/training data set we employ the retrospective test based on the statistic $\tilde{\Lambda}_n$ defined on page 6 in \cite{Aue2009b} with the Newey-West estimator for the log-run variance using the automatic bandwidth selection contained in the R-package 'sandwich' [see \cite{Zeileis2004}].\\
To be precise, the methodology at hand is applied as follows:
If the test of \cite{Aue2009b} rejects for the potential initial stable sample of $m$ observations (where $m=255$ corresponds approximately to one year), we use the corresponding estimator, say $\hat \ell_A$, from this reference  to estimate the change point and consider (next) the set $ \{ X_{\hat\ell_A+1},\dots, X_{\hat \ell_A + m-1}\} $ to investigate stability.
Otherwise, we start the monitoring procedure proposed in this paper with the observations $\{X_1,\ldots ,X_m\}$ as the stable initial data set.
If this monitoring does not reject, we would simply stop monitoring after $m+mT$ observations (2002-12-31) and conclude that there was no change in the considered time frame.
If sequential monitoring rejects, we report the day of rejection and the corresponding location estimate $\hat{\ell}$ defined in \eqref{eq:posestimate}.
Next we define the $m$ ($\approx$ one year) data points $\{ X_{\hat \ell+1}, \dots, X_{m+\hat\ell +1}\}$ subsequent of the location estimate $\hat{\ell}$ as a new (potential) historic data set and restart with the retrospective analysis as described above.
The level of all tests is $5\%$.
The results of this approach are listed in Table \ref{tab:realData} and  the estimated change points are displayed in Figure \ref{fig:realData}.\\
For example the first column in Table \ref{tab:realData} is obtained as follows.
We start by applying the retrospective test to the first $255$ observations (1997-01-02 to 1997-12-31). Since this test does not reject the null hypothesis of a stable covariance matrix, we use this data  as  first initial sample  and start our monitoring procedure with end point 2002-12-31 ($m=255, T=4.92)$.
This sequential monitoring then rejects with observation $534$ (1999-02-12) and the corresponding location estimate for the change is $397$ (1998-07-29) [see also Table \ref{tab:realData}].
Next we check (covariance) stability of the observations $398$ to $652(=398+254)$ with the retrospective test. 
This test does not reject, which means that this set will be the new historic period.  
Now the sequential procedure is relaunched with the same end point (2002-12-31) but an adapted $T=3.36$.
This procedure is continued until the end of the period under investigation.
\begin{table}
\begin{tabular}{cc|cc|cc}
\multicolumn{2}{c}{(T1)} & \multicolumn{2}{c}{(T2)} & \multicolumn{2}{c}{(T3)} \\ 
change point & found at & change point & found at  &change point & found at  \\ 
\hline
1998-07-29 (S) & 1999-02-12 & 1998-07-29 (S) & 1998-10-01 & 1998-05-29 (S) & 1998-09-08 \\ 
1999-12-31 (S) & 2000-04-03 & 1999-10-14 (S) & 2000-03-15 & 1999-10-14 (S) & 2000-03-14 \\ 
2000-05-03 (R) & 2001-01-04 & 2001-04-23 (S) & 2002-01-31 & 2000-10-18 (S) & 2001-01-03 \\ 
2000-10-11 (R) & 2001-05-08 & 2002-05-15 (S) & 2002-07-22 & 2001-04-23 (R) & 2001-10-29 \\
2001-04-23 (R) & 2001-10-22 &                &            & 2002-05-14 (S) & 2002-07-19 \\ 
2002-06-14 (S) & 2002-07-10 &                &            &                &
\end{tabular} 
\caption{ \it 
\rev{Rejection dates (found at) and corresponding estimates of the change point locations for the (log-return) covariance matrix of NASDAQ Composite and Standard and Poor's 500 indices in the time frame 1997-01-02 to 2002-12-31 for different choices of threshold functions.\label{tab:realData}.}}
\end{table}
\begin{figure}
\centering
\begin{tabular}{cc}
\includegraphics[width=8cm,height=5.9cm]{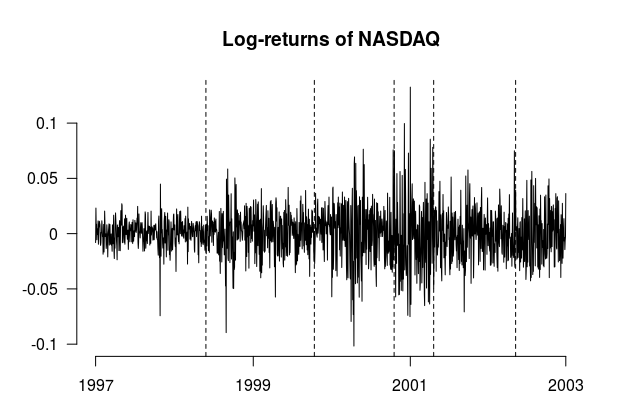} &
\includegraphics[width=8cm,height=5.9cm]{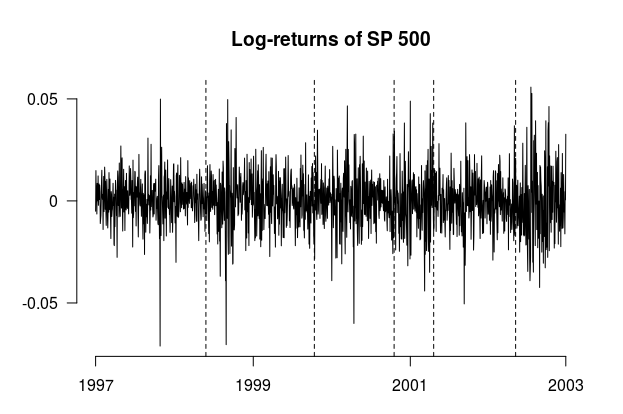} \\
\includegraphics[width=8cm,height=5.9cm]{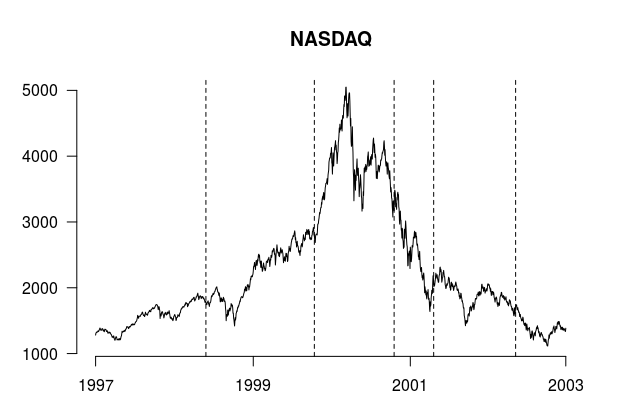} & 
\includegraphics[width=8cm,height=5.9cm]{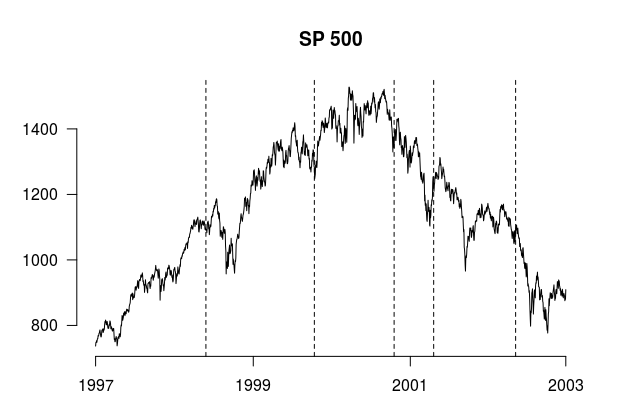} \\
\end{tabular}
\caption{\it Log-returns (upper row) and prices (lower row) of NASDAQ Composite and Standard and Poor's 500 indices. The vertical dashed lines mark positions of estimated change points found by our analysis with threshold function (T3).
In brackets we denote if a change point was detected by the sequential (S) or by the retrospective method (R).\label{fig:realData}}
\end{figure}

\section{Conclusion and outlook}\label{sec:concout}
In this paper we have proposed a new closed end sequential monitoring procedure for changes in the parameter of a $d$ dimensional time series.
We have proved that for large sample sizes the new tests keep its pre-specified level and are consistent.
Moreover, we introduce the concept of self-normalization for sequential change point detection to avoid the estimation of the long-run variance.
Our approach is motivated by the concept of likelihood ratio tests and is generally applicable, whenever the estimate of the parameter can be represented as a linear function of the empirical distribution function with a remainder satisfying several regularity conditions.
These assumptions have to be verified for each case individually and we do this here for the mean, variance and quantile.
An interesting direction for future research is the investigation if these conditions are also satisfied for other functionals such skewness, kurtosis or correlation (for the latter we give some numerical results).
\\
Our empirical findings provide strong evidence that compared to the currently available methodology the new sequential detection scheme based on the likelihood ratio approach yields to a substantial improvement with respect to power.
The improvement was observed in all examples under consideration.
It is less visible if the change points occur shortly after the initial stable sequence but substantial for all other cases, in particular, if the change point is close to the end of the monitoring period.
Thus - although the new approach is computationally more expensive - it should be preferred to the currently available methodology.
\\
An important direction for future research is the development of a corresponding methodology for open end procedures as considered by \cite{Chu1996}, \cite{Horvath2004} or \cite{Kirch2018}, for example.
Moreover, it remains to find suitable threshold functions for the monitoring procedure based on $\hat{D}$.
The functions considered in this work have been originally developed for other monitoring procedures [see \cite{Horvath2004} or \cite{Wied2013} among others], and it is likely that the performance of the new procedure can be further improved by choosing an alternative weight function.

\bigskip
\noindent

\if0\blind
{
{\bf Acknowledgments.}
This work has been supported in part by the Collaborative Research Center ``Statistical modeling of nonlinear dynamic processes'' (SFB 823, Teilprojekt  A1, C1) and the Research Training Group 'High-dimensional phenomena in probability - fluctuations and discontinuity' (RTG 2131) of the German Research Foundation (DFG).
The authors would like to thank Claudia Kirch, Dominik Wied and Wei Biao Wu for some helpful discussions on this subject.
We are also grateful to the three unknown referees and the associate editor for their constructive comments on an earlier version of the paper.
} \fi

\bibliographystyle{chicago}
\setlength{\bibsep}{1pt}
\begin{small}
\bibliography{literature}

\begin{thebibliography}{}

\bibitem[Andrews, 1991]{Andrews1991}
Andrews, D. (1991).
\newblock Heteroskedasticity and autocorrelation consistent covariance matrix
  estimation.
\newblock {\em Econometrica}, 59(3):817--58.

\bibitem[Aue et~al., 2012]{Aue2012}
Aue, A., H\"ormann, S., Horv\'ath, L., Hu\v{s}kov\'a, M., and Steinebach, J.~G.
  (2012).
\newblock Sequential testing for the stability of high-frequency portfolio
  betas.
\newblock {\em Econometric Theory}, 28(4):804–837.

\bibitem[Aue et~al., 2009a]{Aue2009b}
Aue, A., H\"ormann, S., Horváth, L., and Reimherr, M. (2009a).
\newblock Break detection in the covariance structure of multivariate time
  series models.
\newblock {\em The Annals of Statistics}, 37(6B):4046--4087.

\bibitem[Aue and Horv{\'{a}}th, 2013]{Aue2013}
Aue, A. and Horv{\'{a}}th, L. (2013).
\newblock Structural breaks in time series.
\newblock {\em Journal of Time Series Analysis}, 34(1):1--16.

\bibitem[Aue et~al., 2009b]{Aue2009}
Aue, A., Horv\'ath, L., and Reimherr, M.~L. (2009b).
\newblock Delay times of sequential procedures for multiple time series
  regression models.
\newblock {\em Journal of Econometrics}, 149(2):174 -- 190.

\bibitem[Bahadur, 1966]{Bahadur1966}
Bahadur, R.~R. (1966).
\newblock A note on quantiles in large samples.
\newblock {\em The Annals of Mathematical Statistics}, 37(3):577--580.

\bibitem[Chu et~al., 1996]{Chu1996}
Chu, C.-S.~J., Stinchcombe, M., and White, H. (1996).
\newblock Monitoring structural change.
\newblock {\em Econometrica}, 64(5):1045--1065.

\bibitem[Cs{\"o}rg{\"o} and Horv{\'a}th, 1997]{Csorgo1997}
Cs{\"o}rg{\"o}, M. and Horv{\'a}th, L. (1997).
\newblock {\em Limit theorems in change-point analysis}.
\newblock Wiley series in probability and statistics. Wiley.

\bibitem[Davis et~al., 1995]{Davis1995}
Davis, R.~A., Huang, D., and Yao, Y.-C. (1995).
\newblock Testing for a change in the parameter values and order of an
  autoregressive model.
\newblock {\em The Annals of Statistics}, 23(1):282--304.

\bibitem[Dette and Wied, 2016]{Dette2016}
Dette, H. and Wied, D. (2016).
\newblock Detecting relevant changes in time series models.
\newblock {\em Journal of the Royal Statistical Society: Series B (Statistical
  Methodology)}, 78(2):371--394.

\bibitem[Fremdt, 2014]{Fremdt2014}
Fremdt, S. (2014).
\newblock Page's sequential procedure for change-point detection in time series
  regression.
\newblock {\em Statistics: A Journal of Theoretical and Applied Statistics},
  48(1):1--28.

\bibitem[Hinkley, 1971]{Hinkley1971}
Hinkley, D.~V. (1971).
\newblock Inference about the change-point from cumulative sum tests.
\newblock {\em Biometrika}, 58(3):509--523.

\bibitem[Horv\'ath et~al., 2004]{Horvath2004}
Horv\'ath, L., Hu\v{s}kov\'a, M., Kokoszka, P., and Steinebach, J. (2004).
\newblock Monitoring changes in linear models.
\newblock {\em Journal of Statistical Planning and Inference}, 126(1):225 --
  251.

\bibitem[Jandhyala et~al., 2013]{Jandhyala2013}
Jandhyala, V., Fotopoulos, S., MacNeill, I., and Liu, P. (2013).
\newblock Inference for single and multiple change-points in time series.
\newblock {\em Journal of Time Series Analysis}, 34(4):423--446.

\bibitem[Jirak, 2015]{Jirak2015}
Jirak, M. (2015).
\newblock Uniform change point tests in high dimension.
\newblock {\em The Annals of Statistics}, 43(6):2451--2483.

\bibitem[Kifer, 2017]{Kifer2017}
Kifer, Y. (2017).
\newblock Functional {E}rd\"os-{R}\'enyi law of large numbers for
  nonconventional sums under weak dependence.
\newblock {\em Electronic Journal of Probability}, 22(23):1--17.

\bibitem[Kirch and Weber, 2018]{Kirch2018}
Kirch, C. and Weber, S. (2018).
\newblock Modified sequential change point procedures based on estimating
  functions.
\newblock {\em Electron. J. Statist.}, 12(1):1579--1613.

\bibitem[Lai, 1995]{Lai1995}
Lai, T.~L. (1995).
\newblock Sequential changepoint detection in quality control and dynamical
  systems.
\newblock {\em Journal of the Royal Statistical Society. Series B
  (Methodological)}, 57(4):613--658.

\bibitem[Moustakides, 1986]{Moustakides1986}
Moustakides, G.~V. (1986).
\newblock Optimal stopping times for detecting changes in distributions.
\newblock {\em The Annals of Statistics}, 14(4):1379--1387.

\bibitem[Nikiforov, 1987]{Nikiforov1987}
Nikiforov, I. (1987).
\newblock Sequential detection of changes in stochastic systems.
\newblock {\em IFAC Proceedings Volumes}, 20(2):321 -- 327.
\newblock 2nd IFAC Workshop on Adaptive Systems in Control and Signal
  Processing 1986, Lund, Sweden, 30 June-2 July 1986.

\bibitem[Page, 1954]{Page1954}
Page, E.~S. (1954).
\newblock Continuous inspection schemes.
\newblock {\em Biometrika}, 41(1/2):100--115.

\bibitem[Pape et~al., 2016]{Pape2016}
Pape, K., Wied, D., and Galeano, P. (2016).
\newblock Monitoring multivariate variance changes.
\newblock {\em Journal of Empirical Finance}, 39(Part A):54 -- 68.

\bibitem[Shao, 1995]{Shao1995}
Shao, Q.-M. (1995).
\newblock On a conjecture of {R}\'ev\'esz.
\newblock {\em Proceedings of the American Mathematical Society},
  123(2):575--582.

\bibitem[Shao, 2015]{shao2015}
Shao, X. (2015).
\newblock Self-normalization for time series: A review of recent developments.
\newblock {\em Journal of the American Statistical Association},
  110(512):1797--1817.

\bibitem[Shao and Zhang, 2010]{Shao2010}
Shao, X. and Zhang, X. (2010).
\newblock Testing for change points in time series.
\newblock {\em Journal of the American Statistical Association},
  105(491):1228--1240.

\bibitem[Wasserman, 2010]{Wasserman2010}
Wasserman, L. (2010).
\newblock {\em All of Nonparametric Statistics}.
\newblock Springer Publishing Company, Incorporated, 1st edition.

\bibitem[Wied and Galeano, 2013]{Wied2013}
Wied, D. and Galeano, P. (2013).
\newblock Monitoring correlation change in a sequence of random variables.
\newblock {\em Journal of Statistical Planning and Inference}, 143(1):186 --
  196.

\bibitem[Wu, 2005]{Wu2005}
Wu, W.~B. (2005).
\newblock Nonlinear system theory: Another look at dependence.
\newblock {\em Proceedings of the National Academy of Sciences of the United
  States of America}, 102(40):14150--14154.

\bibitem[Zeileis, 2004]{Zeileis2004}
Zeileis, A. (2004).
\newblock Econometric computing with hc and hac covariance matrix estimators.
\newblock {\em Journal of Statistical Software}, 11(10):1--17.

\bibitem[Zhang and Lavitas, 2018]{Zhang2018}
Zhang, T. and Lavitas, L. (2018).
\newblock Unsupervised self-normalized change-point testing for time series.
\newblock {\em Journal of the American Statistical Association}, 0(0):1--12.

\end{thebibliography}


\begin{thebibliography}{}

\bibitem[Shao, 1995]{Shao1995}
Shao, Q.-M. (1995).
\newblock On a conjecture of {R}\'ev\'esz.
\newblock {\em Proceedings of the American Mathematical Society},
  123(2):575--582.

\bibitem[Shorack and Wellner, 1986]{shorack1986}
Shorack, G. and Wellner, J. (1986).
\newblock {\em Empirical processes with applications to statistics}.
\newblock Wiley series in probability and mathematical statistics: Probability
  and mathematical statistics. Wiley.

\bibitem[Wu, 2005a]{Wu2005}
Wu, W.~B. (2005a).
\newblock Nonlinear system theory: Another look at dependence.
\newblock {\em Proceedings of the National Academy of Sciences of the United
  States of America}, 102(40):14150--14154.

\bibitem[Wu, 2005b]{Wu2005b}
Wu, W.~B. (2005b).
\newblock On the {B}ahadur representation of sample quantiles for dependent
  sequences.
\newblock {\em The Annals of Statistics}, 33(4):1934--1963.

\bibitem[Wu and Zhou, 2011]{Wu2011}
Wu, W.~B. and Zhou, Z. (2011).
\newblock Gaussian approximations for non-stationary multiple time series.
\newblock {\em Statistica Sinica}, 21(3):1397--1413.

\end{thebibliography}
\end{small}
\newpage

\appendix
\setcounter{page}{1}
\title{Online Supplement to: A likelihood ratio approach to sequential change point detection for a general class of parameters}
\if0\blind
{
\author{ {\small Holger Dette, Josua G\"osmann}\\
 {\small Ruhr-Universit\"at Bochum }\\
 {\small Fakult\"at f\"ur Mathematik} \\
 {\small44780 Bochum} \\
 {\small Germany }
}
} \fi
\maketitle

\section{Technical details}
\label{sec6}
\begin{proof}[\bf Proof of Theorem \ref{thm:meta-main}]
From \eqref{eq:meta-remainder}, \eqref{utilde} we obtain the representation
\begin{align*}
&m^{-3/2} \tilde \U(\floor{mr},\floor{ms},\floor{mt})
= m^{-3/2}(\floor{mt}-\floor{ms})(\floor{ms}-\floor{mr})
\Big( \hattheta_{\floor{mr+1}}^{\floor{ms}} - \hattheta_{\floor{ms}+1}^{\floor{mt}} \Big)\\
&= \dfrac{\floor{mt}-\floor{ms}}{m^{3/2}} \sum_{i=\floor{mr}+1}^{\floor{ms}} \IF(X_i,F,\theta)
- \dfrac{\floor{ms}-\floor{mr}}{m^{3/2}}\sum_{t=\floor{ms}+1}^{\floor{mt}} \IF(X_i,F,\theta)\\
&+ \dfrac{(\floor{mt}-\floor{ms})(\floor{ms}-\floor{mr})}{m^{3/2}}
\big( R_{\floor{mr}+1,\floor{ms}} - R_{\floor{ms}+1,\floor{mt}} \big)~.
\end{align*}
By Assumption \ref{assump:meta-1} we have
\begin{align*}
\Big \{
\dfrac{\floor{mt}-\floor{ms}}{m^{3/2}} &\sum_{i=\floor{mr}+1}^{\floor{ms}} \IF(X_i,F,\theta)
- \dfrac{\floor{ms}-\floor{mr}}{m^{3/2}}\sum_{i=\floor{ms}+1}^{\floor{mt}} \IF(X_i,F,\theta)
\Big \}_{(r,s,t) \in \Delta_3}\\
&\convd
\Sigma^{1/2}_F\Big \{\big(t-s\big)\big(W(s)-W(r)\big) - \big(s-r\big)\big(W(t)-W(s)\big)\Big \}_{(r,s,t) \in \Delta_3} \\
&~~~~= \Sigma^{1/2}_F\big \{   B(s,t) + B(r,s) -B(r,t)  \big \}_{(r,s,t) \in \Delta_3}~~,
\end{align*}
where we use the definition of the process $B$ in \eqref{eq:Bst} and the fact
\begin{align*}
\sup_{(s,t) \in \Delta_2}
\Big| \dfrac{\floor{mt}-\floor{ms}}{m} - (t-s) \Big|
\leq \dfrac{2}{m} = o(1)~.
\end{align*}
Finally, Assumption \ref{assump:meta-2} yields
\begin{align*}
\dfrac{(\floor{mt}-\floor{ms})(\floor{ms}-\floor{mr})}{m^{3/2}}
\big( R_{\floor{mr}+1,\floor{ms}} - R_{\floor{ms}+1,\floor{mt}} \big)
= o_p(1)~,
\end{align*}
uniformly with respect to $(r,s,t) \in \Delta_3$ so that the proof of Theorem \ref{thm:meta-main} is finished by Slutsky's Theorem.
\end{proof}

\medskip
\begin{proof}[\bf Proof of Corollary \ref{cor:meta-lvl1}]
Define
\begin{align}
\label{dhat}
D_m(k)
= m^{-3} \max_{j=0}^{k-1}
| \U^\top(m+j,m+k)  \Sigma_F^{-1} \U(m+j,m+k) |~.
\end{align}
Using the fact, that the detection scheme $\{D_m(\floor{mt})\}_{t\in [0,T]}$ is piecewise constant (with respect to $t$) and the monotonicity of the threshold function we obtain the representation
\begin{align*}
\max_{k=1}^{Tm} &\dfrac{D_m(k)}{w(k/m)}
= \sup_{t \in [0,T]} \dfrac{D_m(\floor{mt})}{w(t)}\
\
&= \sup_{t \in [1,T+1]}\sup_{s \in [1,t]}
\dfrac{m^{-3}\big| \U^\top(\floor{ms},\floor{mt})  \Sigma_F^{-1}
\U(\floor{ms},\floor{mt}) \big|}
{w(t-1)}~.
\end{align*}
By Remark \ref{rem:Bst} and the continuous mapping theorem we have
\begin{align*}
\max_{k=1}^{Tm} \dfrac{ D_m(k)}{w(k/m)} \convd
\sup_{t \in [1,T+1]} \sup_{s\in [1,t]}
\dfrac{B(s,t)^{\top} B(s,t) }{w(t-1)}~,
\end{align*}
where the process $B$ is defined in \eqref{eq:Bst}.
The result now follows from Remark \ref{rem:Bst},
the fact, that $w_{\alpha}$ has a lower bound and that $\hat \Sigma_m$ is a consistent
estimate of the matrix $\Sigma_F$, which implies (observing  the definition  of $\hat D$ in \eqref{dhat})
\begin{align*}
\Big|
\max_{k=1}^{Tm} \dfrac{ D_m(k)}{w(k/m)} - \max_{k=1}^{Tm} \dfrac{ \hatD_m(k)}{w(k/m)} \Big| & \leq
\max_{k=1}^{Tm} \big |  D_m(k)  - \hatD_m(k) \big |
\\
& \leq \| \hat \Sigma_m^{-1} -
\Sigma_F^{-1} \|_{op}
\sup_{t \in [1,T+1]}\sup_{s \in [1,t]} | m^{-3/2} \U
(\floor{ms},\floor{mt}) |^2   \\
&= o_\Pb(1)~.
\end{align*}
Here $\Vert \cdot \Vert_{op}$ denotes the operator norm and we have  used
the estimate $\| \hat \Sigma_m^{-1} -\Sigma_F^{-1} \|_{op}
= o_\Pb(1)$, which is a consequence
of  the Continuous Mapping Theorem.
\end{proof}

\medskip

\begin{proof}[\bf Proof of Theorem \ref{thm:meta-power}]
By the definition of the statistic $\hatD$ in \eqref{genstat}, we obtain
\begin{align}\label{ineq:powerlowerbound}
\max_{k=0}^{Tm} &\dfrac{\hatD_m(k)}{w_{\alpha}(k/m)}
\geq m^{-3}\dfrac{\big| \U^\top(\floor{mc},m(T+1))
\hat \Sigma_m^{-1}
\U(\floor{mc},m(T+1) \big|}
{w_{\alpha}(T)}~,
\end{align}
where $\floor{mc}$ denotes the (unknown) location of the change.
We can apply expansion \eqref{eq:meta-remainder} to $X_1, \ldots, X_{\floor{mc}}$ and
$X_{\floor{mc}+1}, \ldots ,X_{\floor{mT}}$ and obtain
\begin{align*}
m^{-3/2} \U(\floor{mc},m(T+1))
&= \dfrac{\floor{mc}\big( m(T+1) - \floor{mc} \big)}{m^{3/2}}
\Big( \hattheta_{1}^{\floor{mc}} - \hattheta_{\floor{mc}+1}^{m(T+1)} \Big)\\
&= \dfrac{m(T+1) - \floor{mc}}{m^{3/2}}
\sum_{i=1}^{\floor{mc}} \IF(X_i, F^{(1)} , \theta_{F^{(1)} })  \\
& - \dfrac{\floor{mc}}{m^{3/2}}\sum_{i=\floor{mc}+1}^{m(T+1)} \IF(X_i, F^{(2)} , \theta_{F^{(2)} })\\
&+ \dfrac{\floor{mc} \big( m(T+1) - \floor{mc} \big)}{m^{3/2}}
\Big( \theta_{F^{(1)} } - \theta_{F^{(2)}} + R_{1,\floor{mc}}^{(F^{(1)} )} - R_{\floor{mc}+1,m(T+1)}^{(F^{(2)})} \Big)~,
\end{align*}
where $\theta_{F^{(\ell)}} = \theta (F^{(\ell )})$ $(\ell=1,2)$.
Using Assumption \ref{assump:meta-power} we obtain the joint convergence of
\begin{align*}
 \frac{1}{m^{3/2}}
\begin{pmatrix}
\big( m(T+1) - \floor{mc} \big)
\sum_{i=1}^{\floor{mc}} \IF(X_i, {F^{(1)} }, \theta_{F^{(1)} }) \\
\floor{mc}\sum_{i=\floor{mc}+1}^{m(T+1)} \IF(X_i,{F^{(2)}}, \theta_{F^{(2)}})
\end{pmatrix}
\convd
\begin{pmatrix}
(T+1 - c ) \sqrt{\Sigma_{F^{(1)} }}W_1(c)
 \\
c \sqrt{\Sigma_{F^{(2)}}} \Big( W_2(T+1) - W_2(c) \Big)
\end{pmatrix}
\end{align*}
and
\begin{align*}
\dfrac{\floor{mc}m(T+1)}{m^{3/2}}\Big(R_{1,\floor{mc}}^{({F^{(1)} })} - R_{\floor{mc}+1,m(T+1)}^{({F^{(2)}})}\Big)
&\convp 0~.
\end{align*}
As $\theta_{F^{(1)} } \not = \theta_{F^{(2)} }$ this directly implies
$m^{-3/2} |\U(\floor{mc},m(T+1)) |\convp \infty~,$
and the assertion follows from \eqref{ineq:powerlowerbound} and the assumption that
$\hat \Sigma_m$ is a consistent estimate for $\Sigma_{F^{(1)} }$.
\end{proof}
\bigskip

\begin{proof}[\bf Proof of Theorem \ref{cor:meta-lvl1SN}]
Recalling the definition of $ \tilde{\U}$ and $ \U$ in \eqref{utilde} and \eqref{eq:meta-cusum}, respectively, we obtain for the normalizing process $\V$ in \eqref{eq:meta-snprocess} the representation
\begin{align*}
m^{-4}
\V(\floor{ms},\floor{mt})
&= m^{-4} \sum_{j=1}^{\floor{ms}}j^2(\floor{ms}-j)^2
\Big(\hattheta_1^{j} - \hattheta_{j+1}^{\floor{ms}}\Big)
\Big(\hattheta_1^{j} - \hattheta_{j+1}^{\floor{ms}}\Big)^\top\\
&+ m^{-4} \sum_{j=\floor{ms}+1}^{\floor{mt}} (\floor{mt}-j)^2(j-\floor{ms})^2
\Big(\hattheta_{\floor{ms}+1}^{j} - \hattheta_{j+1}^{\floor{mt}} \Big)
\Big(\hattheta_{\floor{ms}+1}^{j} - \hattheta_{j+1}^{\floor{mt}} \Big)^\top\\
&= m^{-4}\sum_{j=1}^{\floor{ms}} \U(j,\floor{ms})\U^\top(j,\floor{ms})\\
&+ m^{-4}\sum_{j=\floor{ms}+1}^{\floor{mt}}
\tilde  \U(\floor{ms},j,\floor{mt})\tilde  \U^\top(\floor{ms},j,\floor{mt})\\
&= m^{-3}\int_0^s
\U(\floor{mr},\floor{ms})\U^\top(\floor{mr},\floor{ms})dr\\
&+ m^{-3}\int_s^t
\tilde \U(\floor{ms},\floor{mr},\floor{mt})\tilde  \U^\top(\floor{ms},\floor{mr},\floor{mt}) dr~.
\end{align*}
By Theorem \ref{thm:meta-main} we have
\begin{align}\label{conv:kern}
\big \{
m^{-3/2} \tilde{\U}_m(\floor{mr}, \floor{ms}, \floor{mt}) \big \}_{(r,s,t) \in \Delta_3}
&\convd  \Sigma^{1/2}_F\big \{   B(s,t) + B(r,s) -B(r,t)  \big \}_{(r,s,t) \in \Delta_3}
\end{align}
in the space $\ell^{\infty}(\Delta_3, \R^p)$, where the process $B$ is defined in \eqref{eq:Bst}.
Consequently, the Continuous Mapping Theorem yields (in the space $\ell^{\infty}(\Delta_2, \R^p \times \R^p)$)
\begin{align}\label{conv:toshow1}
\Big \{ \Big (
\begin{array}{c}
m^{-3/2}\cdot\U(\floor{ms},\floor{mt}) \\
m^{-4}\cdot \V(\floor{ms},\floor{mt})
\end{array}
\Big )\Big\}_{(s,t) \in \Delta_2}
\convd
\Big \{ \Big (
\begin{array}{c} \Sigma^{1/2}_FB(s,t) \\
\Sigma^{1/2}_F\big(N_1(s) + N_2(s,t)\big)\Sigma_F^{1/2}
\end{array}
\Big ) \Big\}_{(s,t) \in \Delta_2}~,
\end{align}
where $N_1$, $N_2$ are defined in \eqref{eq:denominator}.
Now the assertion of Theorem \ref{cor:meta-lvl1SN} follows by a further application of the  Continuous Mapping Theorem.
\end{proof}

\medskip

\begin{proof}[\bf Proof of Theorem \ref{thm:meta-powerSN}]
By definition of the self-normalized statistic $\hatDSN$  in \eqref{genstatSN}, we obtain
\begin{align}\label{ineq:powerlowerboundSN}
\max_{k=0}^{Tm} \dfrac{\hatDSN_m(k)}{w_{\alpha}(k/m)}
\geq m \cdot \dfrac{\big| \U^\top(\floor{mc},m(T+1)) \V^{-1}(\floor{mc},m(T+1))
\U(\floor{mc},m(T+1) \big|}
{w_{\alpha}(T)}~,
\end{align}
where $\floor{mc}$ denotes the (unknown) location of the change.
The discussion in the proof of Theorem \ref{thm:meta-power} shows
\begin{align*}
m^{-3/2} \U(\floor{mc},m(T+1))
\convp \infty~.
\end{align*}
The proof will be completed by inspecting the random variable $\V^{-1}(\floor{mc},m(T+1))$ in the lower bound in \eqref{ineq:powerlowerboundSN}.
Repeating again the arguments from the proof of Theorem \ref{thm:meta-main} we can rewrite
\begin{align}\label{eq:repSN}
\begin{split}
m^{-4}\cdot\V(\floor{mc},m(T+1))
&= m^{-3}\int_0^c
\U(\floor{mr},\floor{ms})\U^\top(\floor{mr},\floor{ms})dr\\
&+ m^{-3}\int_c^{T+1}
\tilde \U(\floor{ms},\floor{mr},\floor{mt})\tilde  \U^\top(\floor{ms},\floor{mr},\floor{mt}) dr~.
\end{split}
\end{align}
Using Assumption \ref{assump:meta-power} and employing the arguments from the proof of Theorem \ref{thm:meta-main} we obtain weak convergence of
\begin{align*}
\begin{pmatrix}
\{ \U(\floor{mr},\floor{ms}) \}_{0 \leq r \leq s \leq c}  \\
\{ \tilde \U(\floor{ms},\floor{mr},\floor{mt})\}_{c \leq s \leq r \leq t \leq T+1}
\end{pmatrix}
\convd
\begin{pmatrix}
\{ B^{(1)}(r,s) \}_{0 \leq r \leq s \leq c}\\
\{  B^{(2)}(r,t) + B^{(2)}(s,r) -B^{(2)}(s,t) \}_{c \leq s \leq r \leq t \leq T+1}
\end{pmatrix}~,
\end{align*}
where we use the extra definition
\begin{align*}
B^{(\ell)}(s,t) = tW_{\ell}(s) - sW_{\ell}(t)\quad\quad \ell = 1,2
\end{align*}
and $W_1$ and $W_2$ are defined in Assumption \ref{assump:meta-power}.
By the Continuous Mapping Theorem and the representation in \eqref{eq:repSN} this implies
\begin{align*}
m^{-4}\cdot\V(\floor{mc},m(T+1)) \convd \Sigma_{F^{(1)}}^{1/2} \big( N^{(1)}_1(c) \big)\Sigma_{F^{(2)}}^{1/2} + \Sigma_{F^{(2)}}^{1/2}\big( N^{(2)}_2(c,T+1)\big)\Sigma_{F^{(2)}}^{1/2}~,
\end{align*}
where the processes $N_1^{(1)}$ and $N_2^{(2)}$ are distributed like $N_1$ and $N_2$ in \eqref{eq:denominator} but with respect to the processes $B^{(1)}$ and $B^{(2)}$, respectively.
\end{proof}

\bigskip

\begin{proof}[\bf Proof of Proposition \ref{prop:var}]
For the sake of readability, we will give the proof only for the case $d=2$.
The arguments presented here  can be easily extended to higher dimension.
In view of the representation in \eqref{eq:compsvar}, we may also assume without loss of generality that $\mu = \mathbb{E} [X_t] =0$.
\\\\
\noindent
Part (a) of the proposition is a consequence
of the discussion after Corollary \ref{cor:meta-lvl1} provided that Assumptions \ref{assump:meta-1} and \ref{assump:meta-2} can be established.
For this purpose we introduce the notation
\begin{align*}
Z_t
:= \IF_v(X_t, F, V)
= \begin{pmatrix}
X_{t,1}^2 - \E[X_{t,1}^2] \\
X_{t,1}X_{t,2} - \E[X_{t,1}X_{t,2}] \\
X_{t,2}^2 - \E[X_{t,2}^2] \\
\end{pmatrix}~
\end{align*}
and note that the time series $\{Z_t\}_{t \in \mathbb{Z}}$ can be represented as a physical system, that is
\begin{align}\label{eq:newsystem}
Z_t
=  \begin{pmatrix}
g_{1}^2(\varepsilon_t,\dots) - \E[X_{1,1}^2] \\
g_{1}(\varepsilon_t,\dots)g_{2}(\varepsilon_t,\dots) - \E[X_{1,1}X_{1,2}] \\
g_{2}^2(\varepsilon_t,\dots)  - \E[X_{1,2}^2]
\end{pmatrix}
:= G(\varepsilon_t, \varepsilon_{t-1}, \dots) ~,
\end{align}
where $g_i$ denotes the $i$-th component of the function $g$ in \eqref{eq:physicalsys}.
In view of definition \eqref{eq:physicalcoeff} introduce the notation
\begin{align*}
X_t' = g(\varepsilon_t,\varepsilon_{t-1},\dots,\varepsilon_1,\varepsilon_0',\varepsilon_{-1},\dots)~.
\end{align*}
The corresponding physical dependence coefficients $\delta_{t,2}^Z$ in \eqref{eq:physicalcoeff} are then given by
\begin{align*}
\delta_{t,2}^Z
&= \Big\Vert
\sqrt{ (X_{t,1}^2 - (X_{t,1}')^2)^2
+ (X_{t,2}^2 - (X_{t,2}')^2 )^2
+ (X_{t,1}X_{t,2} - X_{t,1}'X_{t,2}')^2} \Big\Vert_2  \\
& \leq \Vert X_{t,1}^2 - (X_{t,1}')^2 \Vert_2 + \Vert X_{t,2}^2 - (X_{t,2}')^2 \Vert_2 + \Vert X_{t,1}X_{t,2} - X_{t,1}'X_{t,2}' \Vert_2 \\
&\leq 3 \cdot \max\Big\{\Vert X_{t,1}^2 - (X_{t,1}')^2 \Vert_2~,
\Vert X_{t,2}^2 - (X_{t,2}')^2 \Vert_2~,
\Vert X_{t,1}X_{t,2} - X_{t,1}'X_{t,2}' \Vert_2 \Big\}~,
\end{align*}
where we used the inequality $\sqrt{a+b} \leq \sqrt{a} + \sqrt{b}$ for $a,b >0$.
Now H\"older's inequality yields for an appropriate constant $C$
\begin{align*}
\Vert X_{t,1}^2 - (X_{t,1}')^2 \Vert_2
&\leq \Vert X_{t,1} + X_{t,1}' \Vert_4 \Vert X_{t,1} - X_{t,1}' \Vert_4
\leq C \cdot \delta_{t,4}~,
\\
\Vert X_{t,1}X_{t,2} - X_{t,1}'X_{t,2}' \Vert_2
&\leq \big\Vert X_{t,1}\big( X_{t,2} - X_{t,2}' \big) \big\Vert_2
+ \big\Vert X_{t,2}'\big( X_{t,1} - X_{t,1}' \big) \big\Vert_2\\
&\leq \big\Vert X_{t,1} \big\Vert_4 \big\Vert X_{t,2} - X_{t,2}' \big\Vert_4
+ \big\Vert X_{t,2}' \big\Vert_4 \big\Vert X_{t,1} - X_{t,1}' \big\Vert_4
\leq C \cdot \delta_{t,4}^{(1)}~.
\end{align*}
Combining these results gives
$\sum_{t=1}^\infty \delta_{t,2}^Z \leq C\cdot  \Theta_{4}^{(1)} < \infty$ and Theorem 3 from \citesuppl{Wu2005} implies the weak convergence
\begin{align*}
\dfrac{1}{\sqrt{m}} \sum_{t=1}^{\floor{ms}} \IF_v(X_t, F, V)
=  \dfrac{1}{\sqrt{m}} \sum_{t=1}^{\floor{ms}} Z_t
\convd \sqrt{\Sigma_F} W(s)
\end{align*}
in the space $\ell^{\infty}([0,T+1],\R^3)$ as $m \to \infty$, where $\Sigma_F$ is the long-run variance matrix defined in \eqref{eq:meta-covar}.
Therefore Assumption \ref{assump:meta-1} is satisfied.

\medskip
\noindent To finish part (a) it remains to show that Assumption \ref{assump:meta-2} holds.
Due to \eqref{eq:remaindervar} this
is a consequence of
\begin{align}\label{eq:varRemainder}
\sup_{1\leq i < j \leq n} \dfrac{1}{\sqrt{j-i+1}}
\Big|\sum_{t=i}^j X_{t,\ell} - \E[X_{t,\ell}] \Big|
= \op(n^{1/4})
\end{align}
for $\ell=1,2,3$.
Since the arguments are exactly the same, we will only elaborate the case $\ell=1$.
For this purpose let
\begin{align*}
S_i = \sum_{t=1}^i X_{t,1} - \E[X_{t,1}]~,
\end{align*}
and note that the left-hand side of \eqref{eq:varRemainder} can be rewritten as
\begin{align*}
\max_{1\leq j \leq n} \max_{ 1 \leq k \leq n-j } \dfrac{1}{\sqrt{k}} |S_{j+k} - S_j |
= \max\Big\{ \max_{1\leq j \leq n} \max_{ 1 \leq k \leq n-j } &\dfrac{1}{\sqrt{k}} (S_{j+k} - S_j)\; ,\;\\
&\max_{1\leq j \leq n} \max_{ 1 \leq k \leq n-j } \dfrac{-1}{\sqrt{k}} (S_{j+k} - S_j) \Big\}~.
\end{align*}
Thus it suffices to show that both terms inside the (outer) maximum are of order $\op(n^{1/4})$~.
For the sake of brevity, we will only prove that
\begin{align}\label{eq:nurpos}
\max_{1\leq j \leq n} \max_{ 1 \leq k \leq n-j } \dfrac{1}{\sqrt{k}} (S_{j+k} - S_j)
= \op(n^{1/4})
\end{align}
and the  other term can be treated in the same way.
Assertion \eqref{eq:nurpos} follows obviously from
the two estimates
\begin{align}
\max_{1\leq j \leq n} \max_{1 \leq k \leq (n-j) \wedge \floor{\log^2(n)}}
\dfrac{S_{j+k}-S_j}{\sqrt{k}n^{1/4}}
&= \op(1)~,\label{eq:remain1}\\
\max_{1\leq j \leq n} \max_{ \floor{\log^2(n)} \leq k \leq n-j }
\dfrac{S_{j+k}-S_j}{\sqrt{k}n^{1/4}}
&= \op(1)~.\label{eq:remain2}
\end{align}
Since the function $g$ is bounded, one directly obtains that there exists a constant $C$ such that $|X_{j,1} - \E[X_{j,1}]| \leq C$.
This gives
\begin{align*}
\bigg| \max_{1\leq j \leq n} \max_{1 \leq k \leq (n-j) \wedge \floor{\log^2(n)}}
\dfrac{S_{j+k}-S_j}{\sqrt{k}n^{1/4}} \bigg|
\leq \max_{1\leq j \leq n} \max_{1 \leq k \leq (n-j) \wedge \floor{\log^2(n)}}
\dfrac{\sqrt{k}C}{n^{1/4}} = o(1)
\end{align*}
and so \eqref{eq:remain1} is shown.
To establish \eqref{eq:remain2} we will use Corollary 1 from \citesuppl{Wu2011}, which implies, that (on a richer probability space) there exists a process $\{ \check{S}_i \}_{i=1}^n$ and a Gaussian process $\{ \check{G}_i \}_{i=1}^n$, such that
\begin{align*}
(\check{S}_1,\dots,\check{S}_n)
&\eqd (S_1,\dots,S_n)
\qquad
\text{and}
\qquad
\max_{1 \leq i \leq n} |\check{S}_i - \check{G}_i|
= \mathcal{O}_{\mathbb{P}}\big(n^{1/4}(\log n)^{3/2}\big)~.
\end{align*}
Additionally, (again on a richer probability space) there exists another Gaussian process $\{\hat{G}_i\}_{i=1}^n$ such that
\begin{align*}
(\check{G}_1,\dots,\check{G}_n)
&\eqd (\hat{G}_1,\dots,\hat{G}_n)
\qquad
\text{and}
\qquad
\max_{1 \leq i \leq n} |\hat{G}_i - G_i|
= \mathcal{O}_{\mathbb{P}}\big(n^{1/4}(\log n)^{3/2}\big)~,
\end{align*}
where the process $G$ is given by
\begin{align*}
\{ G_i \}_{i=1}^n
= \Big \{ \sum_{t=1}^i Y_{t} \Big \}_{i=1}^n~,
\end{align*}
with i.i.d. Gaussian distributed random variables $Y_1,\dots,Y_n \sim\mathcal{N}(0, (\Gamma(g))_{1,1})\big)$ with $\Gamma(g)$ defined in \eqref{eq:ordcovar}.
Therefore we obtain
\begin{align*}
\bigg\vert
&\max_{1\leq j \leq n} \max_{ \floor{\log^2(n)} \leq k \leq n-j }
\dfrac{\check{S}_{j+k}-\check{S}_j}{\sqrt{k}n^{1/4}}
- \max_{1\leq j \leq n} \max_{ \floor{\log^2(n)} \leq k \leq n-j }
\dfrac{\check{G}_{j+k}-\check{G}_j}{\sqrt{k}n^{1/4}}
\bigg\vert\\
&\leq
\max_{1\leq j \leq n} \max_{ \floor{\log^2(n)} \leq k \leq n-j }
\bigg\vert
\dfrac{\check{S}_{j+k}-\check{S}_j - \check{G}_{j+k} + \check{G}_j}{\sqrt{k}n^{1/4}}
\bigg\vert
\leq
2\max_{1\leq j \leq n}
\dfrac{| \check{S}_{j}-\check{G}_j |}{\log^2(n)n^{1/4}}
= \op(1)
\end{align*}
and by the same arguments 
\begin{align*}
\bigg\vert
&\max_{1\leq j \leq n} \max_{ \floor{\log^2(n)} \leq k \leq n-j }
\dfrac{\hat{G}_{j+k}-\hat{G}_j}{\sqrt{k}n^{1/4}}
- \max_{1\leq j \leq n} \max_{ \floor{\log^2(n)} \leq k \leq n-j }
\dfrac{G_{j+k}-G_j}{\sqrt{k}n^{1/4}}
\bigg\vert
= \op(1)~.
\end{align*}
Now Theorem 1 in \citesuppl{Shao1995} gives
\begin{align*}
\lim_{n \to \infty} \max_{1\leq j \leq n}
\max_{ \floor{\log^2(n)} \leq k \leq n-j }
\dfrac{G_{j+k}-G_j}{\sqrt{k}n^{1/4}} = 0
\end{align*}
with probability $1$, which completes the proof of Part (a).
\medskip\\
\noindent
For a proof of part (b) of Proposition \ref{prop:var} let $F^{(1)}$, $\Sigma_{F^{(1)}}$ and $F^{(2)}$, $\Sigma_{F^{(2)}}$ denote the distribution function and corresponding long-run variances in equation \eqref{eq:meta-covartwo} before and after the change point, respectively.
Note that $h = A \cdot g$ and consider the time series
\begin{align*}
\tilde{X}_t =
\begin{cases}
g(\varepsilon_t,\varepsilon_{t-1},\dots)\;\;&\text{if}\;\; t < \floor{mc}~,\\
A^{-1}\cdot h(\varepsilon_t,\varepsilon_{t-1},\dots)\;\;&\text{if}\;\; t \geq \floor{mc}~,
\end{cases}
\end{align*}
which is strictly stationary with distribution function $F^{(1)}$.
Using similar arguments as in the proof of part (a), one easily verifies that
\begin{align}\label{conv:oneprocess}
\Big \{
\dfrac{1}{\sqrt{m}} \sum_{t=1}^{\floor{ms}} \IF_v(\tilde{X}_t, F^{(1)}, V) \Big \}_{s \in [0,T+1]}
\convd \{ \sqrt{\Sigma_{F^{(1)}}} W(s) \}_{s \in [0,T+1]}~.
\end{align}
Next, observe that there exists a matrix $A^{(v)} \in \R^{3\times 3}$, such that for all symmetric matrices $M \in \R^{2\times 2}$, the following identity holds
\begin{align*}
\vech(A \cdot M \cdot A^{\top}) = A^{(v)} \cdot \vech(M)~.
\end{align*}
Further, using \eqref{eq:infvar} one observes
\begin{align*}
A \cdot \IF(\tilde{X}_t,F^{(1)}, V) \cdot A^\top
&= A  \tilde{X_t}  \tilde{X_t}^\top A^{\top}
- A\cdot V(F^{(1)})\cdot A^{\top} = \IF(X_t, F^{(2)}, V)~
\end{align*}
whenever $t \geq \floor{mc}$,
which yields
\begin{align}\label{eq:vector1}
A^{(v)} \IF_v(\tilde{X}_t,F^{(1)}, V)
= \IF_v(X_t, F^{(2)}, V)  \quad\text{for}\quad t \geq \floor{mc}~.
\end{align}
Similar arguments give
\begin{align}\label{eq:vector2}
A^{(v)} \Sigma_{F^{(1)}} (A^{(v)})^\top
= \Sigma_{F^{(2)}}~.
\end{align}
Now consider the mapping
\begin{align*}
\Phi_A:
\left\{
\begin{array}{l}
\;\ell^{\infty}([0,T+1],\R^3) \to
\ell^{\infty}([0,c],\R^3) \times \ell^{\infty}([c,T+1],\R^3)~,\vspace{0.2cm}\\
\{f(s)\}_{s \in [0,T+1]} \mapsto
\begin{pmatrix}
\{f(s)\}_{s \in [0,c]  }\vspace{0.2cm}\\
\big\{ A^{(v)} ( f(s) -f(c)) \big\}_{s \in [c,T+1]}
\end{pmatrix}~,
\end{array}
\right.
\end{align*}
then the Continuous Mapping, \eqref{conv:oneprocess} and \eqref{eq:vector1} yield
\begin{align*}
\left(
\begin{array}{c}
\big\{\frac{1}{\sqrt{m}} \sum_{t=1}^{\floor{ms}} \IF_v(X_t,F^{(1)},V)  \big\}_{s\in [0,c]} \\
\big\{\frac{1}{\sqrt{m}} \sum_{t=\floor{mc}+1}^{\floor{ms}} \IF_v(X_t, F^{(2)},V)  \big\}_{s\in [c,T+1]}
\end{array}
\right)~~&\convd ~
\left(
\begin{array}{c}
 \big\{\sqrt{\Sigma_{F^{(1)}}}
 W(s)\big\}_{s\in [0,c]} \\
 \big\{A^{(v)}\sqrt{\Sigma_{F^{(1)}}} \big( W(s) - W(c)\big) \big\}_{s\in [c,T+1]}
\end{array}
\right)\\
&\qquad\eqd
\left(
\begin{array}{c}
 \big\{\sqrt{\Sigma_{F^{(1)}}}
 W(s)\big\}_{s\in [0,c]} \\
 \big\{\sqrt{\Sigma_{F^{(2)}}} \big( W(s) - W(c)\big) \big\}_{s\in [c,T+1]}
\end{array}
\right)~,
\end{align*}
where the identity in distribution follows from the fact that both components are independent and the identity
\begin{align*}
\big( A^{(v)} \sqrt{\Sigma_{F^{(1)}}} \big)\big( A^{(v)} \sqrt{\Sigma_{F^{(1)}}} \big)^{\top}
= A^{(v)} \Sigma_{F^{(1)}} (A^{(v)})^\top
= \Sigma_{F^{(2)}}~.
\end{align*}
For the verification of Assumption \ref{assump:meta-power} it suffices to show that both, the phase before and after the change point satisfy Assumption \ref{assump:meta-2}.
This can be done using similar arguments as in the proof of part (a) of Proposition \ref{prop:var} and the details are omitted.
\end{proof}

\begin{proof}[\bf Proof of Theorem \ref{thm:quantremain}]
\noindent For a proof of Theorem \ref{thm:quantremain} we will require six Lemmas, that are stated below.
Lemmas \ref{lem:EDFbound}, \ref{lem:shorack}, \ref{lem:FFxy}, \ref{lem:quantas}, \ref{lem:combined} are partially adapted from Lemma 2 and the proof of Theorem 4 in \citesuppl{Wu2005b} but extended to hold uniformly in sample size.
Lemma \ref{lem:quantlem} controls the error of the quantile estimators in case of small samples, where the tail assumptions on the distribution function comes into play. 

\begin{lemma}\label{lem:EDFbound}
Under the assumptions of Theorem \ref{thm:quantremain} for all $0<r<1$ and $\vartheta > 1$, there exists a constant $C_{r,\vartheta}$, such that 
\begin{align*}
\Pb\bigg(\max_{\substack{1 \leq i < j \leq n \\ |j - i| \geq n^r }}\sup_{x \in \R} \;
|\hatFij(x) - F(x)| > C_{r,\vartheta} \dfrac{\sqrt{r\log(n)}}{n^{r/2}} \bigg) \lesssim n^{-\vartheta}~.
\end{align*}
\end{lemma}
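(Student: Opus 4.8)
The plan is to reduce the statement to the Dvoretzky--Kiefer--Wolfowitz (DKW) inequality together with a crude union bound over the polynomially many index pairs. Recall that in the present section the observations $\{X_t\}_{t\in\Z}$ are i.i.d. with distribution function $F$, so for any fixed pair $i<j$ the quantity $\hatFij$ defined in \eqref{eq:Fij} is simply the empirical distribution function of $\ell:=j-i+1$ i.i.d. copies of $X_1$. Massart's form of the DKW inequality therefore gives, for every $\lambda>0$,
\[
\Pb\Big(\sup_{x\in\R}|\hatFij(x)-F(x)|>\lambda\Big)\le 2e^{-2\ell\lambda^2},
\]
and this bound requires nothing beyond independence: in particular neither the tail condition nor the smoothness of $F$ assumed in \eqref{ineq:tail} is needed here, these entering only in the later Lemmas that control the Bahadur remainder.

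First I would fix $\lambda=\lambda_n:=C_{r,\vartheta}\sqrt{r\log n}/n^{r/2}$. For any pair with $|j-i|\ge n^r$ one has $\ell\ge n^r$, hence $\ell\lambda_n^2\ge C_{r,\vartheta}^2\,r\log n$, so the displayed DKW bound becomes $2e^{-2C_{r,\vartheta}^2 r\log n}=2n^{-2rC_{r,\vartheta}^2}$, uniformly over all admissible pairs. Since there are at most $n^2$ pairs $(i,j)$ with $1\le i<j\le n$, a union bound yields
\[
\Pb\Big(\max_{\substack{1\le i<j\le n\\ |j-i|\ge n^r}}\sup_{x\in\R}|\hatFij(x)-F(x)|>\lambda_n\Big)\le 2n^{2-2rC_{r,\vartheta}^2}.
\]
Finally I would choose $C_{r,\vartheta}$ large enough that $2-2rC_{r,\vartheta}^2\le-\vartheta$, for instance $C_{r,\vartheta}=\sqrt{(2+\vartheta)/(2r)}$, which makes the right-hand side $\le 2n^{-\vartheta}$, hence $\lesssim n^{-\vartheta}$, as claimed.

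There is essentially no serious obstacle here; the only points deserving a little care are (i) invoking DKW with the explicit Massart constant $2$, so that $C_{r,\vartheta}$ can be taken explicit and independent of $n$, and (ii) keeping track of the fact that the exponential gain from the subsample length $n^r$ must beat the $n^2$ loss incurred by the union bound, which is precisely why the threshold must be of order $\sqrt{\log n}\,n^{-r/2}$ rather than $n^{-r/2}$. If one preferred to avoid citing DKW, the same conclusion would follow from a Bernstein-type estimate over a finite grid of $x$-values combined with the monotonicity of $\hatFij$ and $F$, but this refinement is unnecessary.
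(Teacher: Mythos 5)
Your proposal is correct and follows essentially the same route as the paper: a union bound over the at most $n^2$ admissible pairs combined with a per-pair sup-norm concentration bound for the empirical distribution function of the subsample, with $C_{r,\vartheta}$ chosen so that the exponential gain from the subsample length $n^r$ beats the polynomial loss of the union bound. The only difference is cosmetic: you invoke Massart's DKW inequality with its explicit constant, whereas the paper reduces the threshold to $\sqrt{\log(j-i+1)}/\sqrt{j-i+1}$ and cites Lemma 2 of Wu (2005b) for a polynomial tail bound, which amounts to the same estimate in the i.i.d.\ setting.
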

\begin{proof}
We have the following upper bounds
\begin{align*}
\Pb\bigg(\max_{\substack{1 \leq i < j \leq n \\ |j - i| \geq n^r }}\sup_{x \in \R} \;
|\hatFij(x) - F(x)| &> C_{r,\vartheta} \dfrac{\sqrt{r\log(n)}}{n^{r/2}} \bigg)\\
&\leq \sum_{\substack{1 \leq i < j \leq n \\ |j - i| \geq n^r }} \Pb\bigg(\sup_{x \in \R} \;
|\hatFij(x) - F(x)| > C_{r,\vartheta} \dfrac{\sqrt{r\log(n)}}{n^{r/2}} \bigg)\\
&\leq \sum_{\substack{1 \leq i < j \leq n \\ |j - i| \geq n^r }} \Pb\bigg(\sup_{x \in \R} \;
|\hatFij(x) - F(x)| > C_{r,\vartheta} \dfrac{\sqrt{\log(j-i+1)}}{\sqrt{j-i+1}} \bigg)~.
\end{align*}
Now choose $\tau>0$ sufficiently large to fulfill $2-\tau r < -\vartheta$. 
Applying Lemma 2 from \citesuppl{Wu2005b}, we obtain that $C_{r,\vartheta}$ can be chosen, such that the last term is (up to a constant) bounded by
\begin{align*}
\sum_{\substack{1 \leq i < j \leq n \\ |j - i| \geq n^r }} |j-i+1|^{-\tau}
\leq \sum_{\substack{1 \leq i < j \leq n \\ |j - i| \geq n^r }} n^{-r\tau}
\leq n^{2- r\tau}
\leq n^{-\vartheta}~.
\end{align*}
\end{proof}

\noindent The following inequality is a (direct) consequence of inequality 14.0.9 from \citesuppl{shorack1986}.
\begin{lemma}\label{lem:shorack}
Under the assumptions of Theorem \ref{thm:quantremain}, let $L_F := \sup f(x)>0$.
It holds for all $0 < a \leq \dfrac{1}{2L_F}$, $s>0$, $n \in \N$ that
\begin{align*}
\Pb \bigg( \sup_{|x-y| \leq a} \big|\hat{F}_1^n (x) - F(x) - \big(\hat{F}_1^n (y) - F(y)\big)\big| \geq \dfrac{s\sqrt{L_Fa}}{\sqrt{n}}\bigg)
\leq \dfrac{c_1}{a} \exp \bigg(-c_2s^2\psi\Big(\dfrac{s}{\sqrt{nL_Fa}}\Big)\bigg)~,
\end{align*}
where $c_1$ and $c_2$ are positive constants (only depending on $F$) and $\psi$ is defined by
\begin{align*}
\psi(x) = 2\dfrac{(x+1)\log(x+1)-x}{x^2} \qquad \text{for}~ x>0~.
\end{align*}
\end{lemma}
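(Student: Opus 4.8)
The plan is to derive Lemma~\ref{lem:shorack} from the classical exponential inequality for the oscillation modulus of the uniform empirical process --- inequality 14.0.9 in \citesuppl{shorack1986} --- by first passing to uniform marginals and then matching parameters. Concretely, I would set $U_t := F(X_t)$, $t=1,\dots,n$; these are i.i.d.\ uniform on $[0,1]$, and writing $\mathbb{G}_n$ for their empirical distribution function we have $\hat{F}_1^n(x) = \frac1n\sum_{t=1}^n I\{X_t\le x\} = \mathbb{G}_n(F(x))$, hence $\hat{F}_1^n(x)-F(x) = \mathbb{G}_n(F(x)) - F(x)$. Since the density of $F$ is bounded by $L_F$, the map $x\mapsto F(x)$ is $L_F$-Lipschitz, so $|x-y|\le a$ implies $|F(x)-F(y)|\le L_F a$ and therefore
\begin{align*}
\sup_{|x-y|\le a}\big|\hat{F}_1^n(x)-F(x)-\big(\hat{F}_1^n(y)-F(y)\big)\big|
\;\le\;\sup_{\substack{u,v\in[0,1]\\ |u-v|\le L_F a}}\big|(\mathbb{G}_n(u)-u)-(\mathbb{G}_n(v)-v)\big|.
\end{align*}
The hypothesis $a\le 1/(2L_F)$ is precisely what keeps the window $\delta := L_F a$ below $1/2$, i.e.\ inside the admissible range of the oscillation bound.

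Next I would bound the right-hand side by covering $[0,1]$ with $O(1/\delta)$ overlapping intervals of length $2\delta$ and applying a Bennett-type inequality on each: over such an interval the increment $(\mathbb{G}_n(u)-u)-(\mathbb{G}_n(v)-v)$ is a centred average of $n$ i.i.d.\ summands bounded by $1/n$ with variance of order $\delta/n$, so a union bound gives, for every $\lambda>0$,
\begin{align*}
\Pb\Big(\sup_{\substack{u,v\in[0,1]\\ |u-v|\le\delta}}\big|(\mathbb{G}_n(u)-u)-(\mathbb{G}_n(v)-v)\big|\ge\lambda\Big)
\;\le\;\frac{C}{\delta}\exp\!\Big(-\tfrac{n\lambda^2}{2\delta}\,\psi(\lambda/\delta)\Big),
\end{align*}
which is the content of inequality 14.0.9 in \citesuppl{shorack1986}. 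Specializing $\delta=L_F a$ and $\lambda = s\sqrt{L_F a}/\sqrt n$ turns the exponent into $-\tfrac{s^2}{2}\,\psi\big(s/\sqrt{nL_F a}\big)$ and the prefactor into $C/(L_F a)$; absorbing the $L_F$-dependent numerical constants into $c_1$ and $c_2$ (which then depend only on $F$) reproduces the asserted bound.

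The argument is otherwise routine, so ``the main obstacle'' is merely a matter of care: the cited reference states the exponential bound for the supremum of the empirical process over a fixed sub-interval rather than directly for the increment modulus, so the covering and union-bound step above must be carried out explicitly, and it is this step that produces the $c_1/a$ prefactor and pins down the dependence of $c_1$ on $L_F$.
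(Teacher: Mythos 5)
Your proposal is correct and follows essentially the same route as the paper's proof: reduce to the uniform empirical process via the probability integral/quantile transformation, use the $L_F$-Lipschitz property of $F$ (with $a\le 1/(2L_F)$ keeping the window $\delta=L_Fa$ admissible), and then invoke the oscillation inequality 14.0.9 of \citesuppl{shorack1986} with $\lambda=s\sqrt{L_Fa}/\sqrt{n}$. The only cosmetic difference is that you offer to re-derive that inequality by a Bennett-plus-covering argument, whereas the paper simply cites 14.0.9 directly as the modulus-of-oscillation bound, which already contains the $c_1/a$ prefactor.
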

\begin{proof}
Denote by $U_1,\dots ,U_n$ a sample of i.i.d. $\sim \mathcal{U}([0,1])$ random variables and note that by Lipschitz continuity $|x-y|\leq a$ implies $|F(x)-F(y)|\leq L_F\cdot a$.
Using also that $F$ is surjective and continuous by assumption, we obtain by quantile transformation
\begin{align*}
&\sup_{|x-y| \leq a} \big|\hat{F}_1^n (x) - F(x) - \big(\hat{F}_1^n (y) - F(y)\big)\big|\\
&= \sup_{|x-y| \leq a} \dfrac{1}{n}\bigg| \sum_{i=1}^n I\{X_i \leq x\} -F(x) - I\{X_i \leq y\}  + F(y) \bigg|\\
&\eqd \sup_{|x-y| \leq a} \dfrac{1}{n}\bigg| \sum_{i=1}^n I\{F^-(U_i) \leq x\} -F(x) - I\{F^-(U_i) \leq y\}  + F(y) \bigg|\\
&= \sup_{|x-y| \leq a} \dfrac{1}{n}\bigg| \sum_{i=1}^n I\{U_i \leq F(x)\} -F(x) - I\{U_i \leq F(y)\}  + F(y) \bigg|\\
&\leq \sup_{|F(x)-F(y)| \leq L_F\cdot a} \dfrac{1}{n}\bigg| \sum_{i=1}^n I\{U_i \leq F(x)\} -F(x) - I\{U_i \leq F(y)\}  + F(y) \bigg|\\
&= \sup_{\substack{x,y \in [0,1] \\|x-y| \leq L_F\cdot a}} \dfrac{1}{n}\bigg| \sum_{i=1}^n I\{U_i \leq x\} -x - I\{U_i \leq y\}  + y \bigg|~.
\end{align*} 
The claim now follows from inequality 14.0.9 in \citesuppl{shorack1986} for the uniform empirical process.
\end{proof}

\begin{lemma}\label{lem:FFxy}
Under the assumptions of Theorem \ref{thm:quantremain} for all $0<r<1$ and $\vartheta > 1$, there exists a constant $C_{r,\vartheta}$, such that for all positive sequences $\{a_n\}_{n \in \N}$ with 
\begin{align}\label{eq:assumpan}
\dfrac{r\log n}{n^r a_n} = o(1)
\;\;\;
\text{and}
\;\;\; a_n = o(1)
\end{align}
it holds that
\begin{align*}
\Pb\bigg(\max_{\substack{1 \leq i < j \leq n \\ |j - i| \geq n^r }}\sup_{|x - y| \leq a_n}\;
|\hatFij(x) - F(x) - (\hatFij(y) - F(y))| > C_{r,\vartheta} \sqrt{\dfrac{a_n r\log(n)}{n^r}} \bigg) \lesssim n^{-\vartheta}
\end{align*}
provided that $n$ is sufficiently large.
\end{lemma}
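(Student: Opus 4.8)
The plan is to reduce the uniform statement to one application of Lemma~\ref{lem:shorack} per subsample, followed by a union bound over all $O(n^{2})$ admissible pairs $(i,j)$; the point is that the exponential tail in Lemma~\ref{lem:shorack} beats the polynomially many pairs once the constant $C_{r,\vartheta}$ is taken large enough.

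Since the $X_t$ are i.i.d.\ with distribution $F$, for each pair $(i,j)$ the empirical distribution $\hatFij$ has the law of $\hat F_1^{N}$ with $N:=j-i+1$, so Lemma~\ref{lem:shorack} applies with $n$ there replaced by $N$. Because $a_n=o(1)$ we have $a_n\le 1/(2L_F)$ for all large $n$, so the lemma is applicable with $a=a_n$ and any $s>0$. For a pair with $N\ge n^{r}$ I would take the deviation level
\begin{align*}
s=s_N:=\dfrac{C_{r,\vartheta}}{\sqrt{L_F}}\,\sqrt{N}\,\sqrt{\dfrac{r\log n}{n^{r}}}~,
\end{align*}
which makes the threshold $s_N\sqrt{L_F a_n}/\sqrt N$ in Lemma~\ref{lem:shorack} exactly equal to $C_{r,\vartheta}\sqrt{a_n r\log n/n^{r}}$. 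The decisive simplification is that with this choice the argument of $\psi$ becomes
\begin{align*}
\dfrac{s_N}{\sqrt{N L_F a_n}}=\dfrac{C_{r,\vartheta}}{L_F}\sqrt{\dfrac{r\log n}{n^{r}a_n}}~,
\end{align*}
which does \emph{not} depend on $N$ and tends to $0$ by assumption~\eqref{eq:assumpan}; since $\psi(x)\to 1$ as $x\downarrow 0$, we get $\psi(\cdot)\ge\tfrac12$ for $n$ large, uniformly over all admissible pairs. Combined with $s_N^{2}\ge C_{r,\vartheta}^{2}r\log n/L_F$ (using $N\ge n^{r}$), Lemma~\ref{lem:shorack} then yields, for every admissible $(i,j)$ and all large $n$,
\begin{align*}
\Pb\bigg(\sup_{|x-y|\le a_n}\big|\hatFij(x)-F(x)-(\hatFij(y)-F(y))\big|> C_{r,\vartheta}\sqrt{\dfrac{a_n r\log n}{n^{r}}}\bigg)\le \dfrac{c_1}{a_n}\,n^{-\frac{c_2 r}{2L_F}C_{r,\vartheta}^{2}}~.
\end{align*}

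It then remains to take a union bound over the at most $n^{2}$ admissible pairs and to absorb the prefactor $c_1/a_n$: assumption~\eqref{eq:assumpan} forces $a_n\ge r\log n/n^{r}$ for $n$ large, hence $1/a_n\le n^{r}$, and so the probability in question is at most $c_1\,n^{2+r-\frac{c_2 r}{2L_F}C_{r,\vartheta}^{2}}\le c_1\,n^{3-\frac{c_2 r}{2L_F}C_{r,\vartheta}^{2}}$ (using $r<1$). Choosing $C_{r,\vartheta}$ so large that $\frac{c_2 r}{2L_F}C_{r,\vartheta}^{2}\ge 3+\vartheta$ gives the claimed bound $\lesssim n^{-\vartheta}$.

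The main obstacle is the uniformity over subsample sizes $N\in[n^{r},n]$: a Shorack--Wellner oscillation bound controls one empirical process at a time, so the deviation level $s_N$ must be chosen with care, and the crucial (and slightly lucky) point is that after this choice the argument of $\psi$ — and therefore the exponential rate — becomes independent of $N$ and is controlled precisely by~\eqref{eq:assumpan}, while the remaining losses (the $n^{2}$ pairs and the $1/a_n\le n^{r}$ prefactor) are only polynomial and hence harmless once $C_{r,\vartheta}$ is large.
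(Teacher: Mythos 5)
Your proposal is correct and follows essentially the same route as the paper's proof: an application of Lemma \ref{lem:shorack} to each subsample (using the i.i.d.\ structure so that $\hatFij$ has the law of $\hat F_1^{j-i+1}$), the observation that assumption \eqref{eq:assumpan} forces the argument of $\psi$ to vanish so the exponential rate is of order $\log n$, and a union bound over the at most $n^2$ pairs with the $1/a_n$ prefactor absorbed polynomially by taking $C_{r,\vartheta}$ large. The only difference is cosmetic: you calibrate $s_N$ so the threshold matches the target exactly (making the $\psi$-argument independent of $N$), whereas the paper takes $s=C_{r,\vartheta}\sqrt{\log m / L_F}$ per subsample and then compares thresholds via monotonicity of $\log x/x$ and of $\psi$.
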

\begin{proof}
First consider $m=m_n\geq n^r$.
For $n$ sufficiently large we have $a_n \leq 1/(2L_F)$ and so choosing $a=a_{n}$ and $s=C_{r,\vartheta}\sqrt{\log(m)/L_F}$ in Lemma \ref{lem:shorack} we obtain that
\begin{align*}
&\Pb \bigg( \sup_{|x-y| \leq a_n} \big|\hat{F}_1^m (x) - F(x) - \big(\hat{F}_1^m (y) - F(y)\big)\big| \geq C_{r,\vartheta}\dfrac{\sqrt{a_n\log(m)}}{\sqrt{m}}\bigg)\\
&\leq \dfrac{c_1}{a_n} \exp \bigg( -c_2C_{r,\vartheta}^2\log(m)\psi\bigg( \dfrac{C_{r,\vartheta}}{L_F}\sqrt{\dfrac{\log(m)}{m a_n}}\bigg) \bigg)~.
\end{align*}
Using that $\psi$ is non-increasing the last expression can be bounded by
\begin{align}\label{psifallend}
\dfrac{c_1}{a_n} \exp \bigg( -c_2\log(n^{C_{r,\vartheta}^2r})\psi\bigg( \dfrac{C_{r,\vartheta}}{L_F}\sqrt{\dfrac{r\log(n)}{n^ra_n}}\bigg) \bigg)~.
\end{align}
Next note that by the assumption on $a_n$ we obtain
\begin{align*}
\lim_{n \to \infty} \psi\bigg( \dfrac{C_{r,2}}{L_F}\sqrt{\dfrac{r\log(n)}{n^ra_n}}\bigg)
= 1~.
\end{align*}
Thus for $n$ sufficiently large and with an adapted constant $\tilde{c_2}$ the term in \eqref{psifallend} is bounded by 
\begin{align*}
\dfrac{c_1}{a_n} \exp \bigg( -\tilde{c}_2\log(n^{C_{r,\vartheta}^2r})\bigg)
= \dfrac{c_1 n^{-\tilde{c}_2C_{r,\vartheta}^2 r}}{a_n}
= \dfrac{c_1 n^{-\tilde{c}_2C_{r,\vartheta}^2 r +r}}{n^ra_n}
\lesssim n^{-\vartheta-2}~,
\end{align*}
where we chose $C_{r,\vartheta}$ sufficiently large in the last estimate and used that $(n^ra_n)^{-1} = o(1)$ by assumption \eqref{eq:assumpan}.
Since the sequence $\{X_t\}_{t \in \Z}$ is i.i.d. we can now finish the proof
\begin{align*}
&\Pb\bigg(\max_{\substack{1 \leq i < j \leq n \\ |j - i| \geq n^r }}\sup_{|x - y| \leq a_n}\;
|\hatFij(x) - F(x) - (\hatFij(y) - F(y))| > C_{r,\vartheta} \dfrac{\sqrt{a_n r\log(n)}}{n^{r/2}} \bigg)\\
&\leq \sum_{\substack{1 \leq i < j \leq n \\ |j - i| \geq n^r }} \Pb\bigg(\sup_{|x - y| \leq a_n}\;
|\hatFij(x) - F(x) - (\hatFij(y) - F(y))| > C_{r,\vartheta} \dfrac{\sqrt{a_n r\log(n)}}{n^{r/2}} \bigg)\\
&\leq \sum_{\substack{1 \leq i < j \leq n \\ |j - i| \geq n^r }} \Pb\bigg(\sup_{|x - y| \leq a_n}\;
|\hatFij(x) - F(x) - (\hatFij(y) - F(y))| > C_{r,\vartheta} \dfrac{\sqrt{a_n \log(j-i+1)}}{\sqrt{j-i+1}} \bigg)\\
&\lesssim \sum_{\substack{1 \leq i < j \leq n \\ |j - i| \geq n^r }} n^{-\vartheta -2} \leq n^{-\vartheta}~.
\end{align*} 
\end{proof}

\begin{remark}\label{rem:BorelCantelli}
For the remainder of the chapter we can choose fixed $\vartheta>1$ and denote by $C_{r,1}$ and $C_{r,2}$ the corresponding constants
from Lemma \ref{lem:EDFbound} and \ref{lem:FFxy}, respectively.
Further define the sequence
\begin{align}\label{eq:bn}
b_{n,r} = C_{r,3}\sqrt{r\log(n)}/n^{r/2}~,
\end{align}
where $C_{r,3}$ is a constant such that $C_{r,3} > 2(C_{r,1}+1)/f(q_\beta)$.
Now let 'i.o.' be a shortcut for 'infinitely often' and note that Lemma \ref{lem:EDFbound} and \ref{lem:FFxy} together with the Borel-Cantelli Lemma imply
\begin{align*}
\Pb\bigg(\max_{\substack{1 \leq i < j \leq n \\ |j - i| \geq n^r }}\sup_{x \in \R} \;
|\hatFij(x) - F(x)| > C_{r,1} \dfrac{\sqrt{r\log(n)}}{n^{r/2}} \;\;\text{i.o.} \bigg)=0
\end{align*}
and
\begin{align*}
\Pb\bigg(\max_{\substack{1 \leq i < j \leq n \\ |j - i| \geq n^r }}\sup_{|x - y| \leq b_{n,r}}\;
|\hatFij(x) - F(x) - \big(\hatFij(y) - F(y)\big)| > C_{r,2} \dfrac{\sqrt{b_{n,r}r\log(n)}}{n^{r/2}} \;\;\text{i.o.} \bigg)=0~,
\end{align*}
which we require for the proof of the next Lemma.
\end{remark}

\begin{lemma}\label{lem:quantas}
Under the assumptions of Theorem \ref{thm:quantremain} it holds for all $r \in (0,1)$ that
\begin{align*}
\limsup_{n \to \infty} b_{n,r}^{-1}\max_{\substack{1 \leq i < j \leq n \\ |j - i| \geq n^r }} | \hatqijbeta - q_{\beta} |\leq 1
\end{align*}
with probability one.
\end{lemma}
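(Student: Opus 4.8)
The plan is to deduce the statement from the uniform Glivenko--Cantelli estimate of Lemma \ref{lem:EDFbound}. By Remark \ref{rem:BorelCantelli} there is an event $\Omega_0$ of probability one on which, for all sufficiently large $n$,
\[
\eta_n := C_{r,1}\,\frac{\sqrt{r\log n}}{n^{r/2}} \;\geq\; \max_{\substack{1\le i<j\le n\\ |j-i|\ge n^r}} \;\sup_{x\in\R}\, |\hatFij(x)-F(x)|~.
\]
I would work on $\Omega_0$ throughout and show that $|\hatqijbeta - q_\beta| \le b_{n,r}$ simultaneously for all admissible $(i,j)$ (i.e.\ $1\le i<j\le n$, $|j-i|\ge n^r$) once $n$ is large. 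The basic tool is the elementary equivalence $\varphi_\beta(G)\le x \Leftrightarrow G(x)\ge\beta$, valid for every distribution function $G$; applied with $G=\hatFij$ it shows that $\hatqijbeta \le q_\beta+b_{n,r}$ is implied by $\hatFij(q_\beta+b_{n,r})\ge\beta$, while $\hatqijbeta > q_\beta-b_{n,r}$ is implied by $\hatFij(q_\beta-b_{n,r})<\beta$.

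Next I would use the local smoothness of $F$. Since $F$ is differentiable with continuous density $f$ and $f(q_\beta)>0$ (and, as is standard, $F(q_\beta)=\beta$ by continuity of $F$), there is $\varepsilon_0>0$ with $f(x)\ge \tfrac12 f(q_\beta)$ for $|x-q_\beta|\le\varepsilon_0$; hence, by the mean value theorem, $F(q_\beta+\varepsilon)\ge\beta+\tfrac12 f(q_\beta)\varepsilon$ and $F(q_\beta-\varepsilon)\le\beta-\tfrac12 f(q_\beta)\varepsilon$ for $0<\varepsilon\le\varepsilon_0$. Because $b_{n,r}\to0$, we have $b_{n,r}\le\varepsilon_0$ for large $n$, and then, on $\Omega_0$,
\[
\hatFij(q_\beta+b_{n,r}) \ge \beta + \tfrac12 f(q_\beta)\,b_{n,r} - \eta_n, \qquad
\hatFij(q_\beta-b_{n,r}) \le \beta - \tfrac12 f(q_\beta)\,b_{n,r} + \eta_n~.
\]
By the choice $C_{r,3}>2(C_{r,1}+1)/f(q_\beta)$ in \eqref{eq:bn} one has $\tfrac12 f(q_\beta)\,b_{n,r} > (C_{r,1}+1)\sqrt{r\log n}/n^{r/2} > \eta_n$, so the first lower bound exceeds $\beta$ and the second upper bound is strictly below $\beta$. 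Invoking the two implications above gives $q_\beta-b_{n,r} < \hatqijbeta \le q_\beta+b_{n,r}$, i.e.\ $|\hatqijbeta - q_\beta|\le b_{n,r}$, for every admissible $(i,j)$ and all large $n$. Dividing by $b_{n,r}$ and letting $n\to\infty$ yields $\limsup_{n\to\infty} b_{n,r}^{-1}\max_{|j-i|\ge n^r}|\hatqijbeta - q_\beta|\le1$ on $\Omega_0$, which is the assertion.

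I do not anticipate a genuine obstacle here; the argument is the usual ``invert the Glivenko--Cantelli bound through the quantile transform'' trick. The only points needing a little care are the strict inequality in the lower bound $\hatFij(q_\beta-b_{n,r})<\beta$ --- which is precisely why the constant $C_{r,3}$ in \eqref{eq:bn} is taken with the extra ``$+1$'' cushion --- and the verification that $b_{n,r}$ eventually lies inside the neighbourhood $[q_\beta-\varepsilon_0,q_\beta+\varepsilon_0]$ on which the one-sided linearisations of $F$ hold; both follow immediately from $b_{n,r}\to0$ and from the uniformity in $(i,j)$ already supplied by Lemma \ref{lem:EDFbound}. Note that the tail condition on $|X_1|$ in \eqref{ineq:tail} plays no role at this stage --- it enters only in the companion Lemma \ref{lem:quantlem} treating the short blocks with $|j-i|<n^r$.
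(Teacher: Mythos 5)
Your proof is correct, and it follows the same basic strategy as the paper: reduce the two-sided bound on $\hatqijbeta - q_\beta$ to showing, uniformly over the admissible pairs $(i,j)$ and eventually in $n$ almost surely, that $\hatFij(q_\beta+b_{n,r})\geq\beta$ and $\hatFij(q_\beta-b_{n,r})<\beta$, and then conclude via the quantile inversion and the Borel--Cantelli event of Remark \ref{rem:BorelCantelli}. The one genuine difference is in how these two inequalities are established. The paper decomposes $\hatFij(q_\beta\pm b_{n,r})-\beta$ into the deterministic increment $F(q_\beta\pm b_{n,r})-\beta$, the empirical error at $q_\beta$, and the oscillation of $\hatFij-F$ over an interval of length $b_{n,r}$, and therefore needs both Lemma \ref{lem:EDFbound} and the modulus-of-continuity Lemma \ref{lem:FFxy}; the cushion ``$+1$'' in the choice $C_{r,3}>2(C_{r,1}+1)/f(q_\beta)$ is there to absorb the (asymptotically smaller) oscillation term $C_{r,2}\sqrt{b_{n,r}\,r\log n}/n^{r/2}$. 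You instead apply the sup-norm bound of Lemma \ref{lem:EDFbound} directly at the two shifted points $q_\beta\pm b_{n,r}$, which makes Lemma \ref{lem:FFxy} unnecessary for this particular lemma (it is still indispensable later, in Lemma \ref{lem:combined}); with the factor $\tfrac12 f(q_\beta)$ from your local linearisation of $F$, the same constant $C_{r,3}$ still gives the strict margin $\tfrac12 f(q_\beta)b_{n,r}>\eta_n$. So your route is slightly more economical, at the cost of nothing; the only points to spell out in a polished write-up are the ones you already flag, namely $F(q_\beta)=\beta$, the strictness of $\hatFij(q_\beta-b_{n,r})<\beta$, and that $b_{n,r}\to0$ places $q_\beta\pm b_{n,r}$ in the neighbourhood where $f\geq\tfrac12 f(q_\beta)$ (which follows from the boundedness of $|f'|$ assumed in \eqref{ineq:tail}).
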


\begin{proof}
The claim is equivalent to 
\begin{align}\label{eq:quantprob}
\Pb\bigg( \max_{\substack{1 \leq i < j \leq n \\ |j - i| \geq n^r }} | \hatqijbeta - q_{\beta} | > b_{n,r} \;\;\text{i.o.}\bigg) = 0~.
\end{align}
By definition of the empirical quantile $\max_{\substack{1 \leq i < j \leq n \\ |j - i| \geq n^r }} | \hatqijbeta - q_{\beta} | > b_{n,r}$ means that at least one of the considered e.d.f.s first exceeds the level $\beta$ outside of the interval $[q_\beta - b_{n,r},q_\beta + b_{n,r}]$.
Thus using also monotonicity of the e.d.f.s statement \eqref{eq:quantprob} follows if we can establish
\begin{enumerate}[(i)]
\item $\Pb\bigg( \min\limits_{\substack{1 \leq i < j \leq n \\ |j - i| > n^r }}  \hatFij(q_\beta + b_{n,r}) - \beta < 0 \;\;\text{i.o.}\bigg)=0$~,
\item $\Pb\bigg( \max\limits_{\substack{1 \leq i < j \leq n \\ |j - i| > n^r }}  \hatFij(q_\beta - b_{n,r}) - \beta > 0 \;\;\text{i.o.}\bigg)=0$~.
\end{enumerate}
Let us start with (i).
By a Taylor expansion we obtain
\begin{align*}
&\min_{\substack{1 \leq i < j \leq n \\ |j - i| \geq n^r }}  \hatFij(q_\beta + b_{n,r}) - \beta\\
&= \min_{\substack{1 \leq i < j \leq n \\ |j - i| \geq n^r }} \bigg[ F(q_\beta + b_{n,r}) - \beta - \hatFij(q_\beta ) + \beta
+ \hatFij(q_\beta + b_{n,r}) - F(q_\beta + b_{n,r}) + \hatFij(q_\beta) - F(q_\beta) \bigg]\\
&\geq F(q_\beta + b_{n,r}) - \beta - \max_{\substack{1 \leq i < j \leq n \\ |j - i| \geq n^r }}  |\hatFij(q_\beta) - \beta|
- \max_{\substack{1 \leq i < j \leq n \\ |j - i| > n^r }}\sup_{|x-y|\leq b_{n,r}} \big|\hatFij(x)-F(x) - \big(\hatFij(y) - F(y)\big) \big|\\
&\geq b_{n,r}f(q_\beta) + \frac{b_{n,r}^2}{2}\inf_{x \in \R}f'(x) - \max_{\substack{1 \leq i < j \leq n \\ |j - i| \geq n^r }}  |\hatFij(q_\beta) - \beta|\\
&\;\qquad- \max_{\substack{1 \leq i < j \leq n \\ |j - i| \geq n^r }}\sup_{|x-y|\leq b_{n,r}} \big|\hatFij(x)-F(x) - \big(\hatFij(y) - F(y)\big) \big|~.
\end{align*}
This yields
\begin{align*}
&\Pb\bigg( \min_{\substack{1 \leq i < j \leq n \\ |j - i| \geq n^r }}  \hatFij(q_\beta + b_{n,r}) - \beta \leq 0 \;\;\text{i.o.}\bigg)\\
&\leq \Pb\bigg(   \max_{\substack{1 \leq i < j \leq n \\ |j - i| \geq n^r }}\sup_{|x-y|\leq b_{n,r}} \big|\hatFij(x)-F(x) - \big(\hatFij(y) - F(y)\big) \big|
+ \max_{\substack{1 \leq i < j \leq n \\ |j - i| \geq n^r }}  |\hatFij(q_\beta) - \beta|\\
&\hspace{10.4cm}\geq b_{n,r}f(q_\beta) + \frac{b_{n,r}^2}{2}\inf_{x \in \R}f'(x) \;\;\text{i.o.}\bigg)\\
&\leq \Pb\bigg(\max_{\substack{1 \leq i < j \leq n \\ |j - i| \geq n^r }}  |\hatFij(q_\beta) - \beta|\geq \frac{b_{n,r}}{2}f(q_\beta) + \frac{b_{n,r}^2}{4}\inf_{x \in \R}f'(x) \;\;\text{i.o.}\bigg)\\
&\qquad+ \Pb\bigg(\max_{\substack{1 \leq i < j \leq n \\ |j - i| \geq n^r }}\sup_{|x-y|\leq b_{n,r}} \big|\hatFij(x)-F(x) - \big(\hatFij(y) - F(y)\big) \big|\geq \frac{b_{n,r}}{2}f(q_\beta) + \frac{b_{n,r}^2}{4}\inf_{x \in \R}f'(x) \;\;\text{i.o.}\bigg)~.
\end{align*}
By definition of $b_{n,r}$ in \eqref{eq:bn} we have $b_{n,r}f(q_\beta)/2 > (C_{r,1}+1)\sqrt{r\log(n)}/n^{r/2}$ and so Remark \ref{rem:BorelCantelli} yields that the last two probabilities are zero.
To achieve (ii), we proceed similar and obtain 
\begin{align*}
&\;\max_{\substack{1 \leq i < j \leq n \\ |j - i| \geq n^r }}  \hatFij(q_\beta - b_{n,r}) - \beta\\
&= \max_{\substack{1 \leq i < j \leq n \\ |j - i| \geq n^r }}  F(q_\beta - b_{n,r}) - \beta - \hatFij(q_\beta ) + \beta
+ \hatFij(q_\beta - b_{n,r}) - F(q_\beta - b_{n,r}) + \hatFij(q_\beta) - F(q_\beta)\\
&\leq F(q_\beta - b_{n,r}) - \beta + \max_{\substack{1 \leq i < j \leq n \\ |j - i| \geq n^r }} | \hatFij(q_\beta ) - \beta|\\
&\;\;+  \max_{\substack{1 \leq i < j \leq n \\ |j - i| \geq n^r }}\sup_{|x-y|\leq b_{n,r}} |\hatFij(x) - F(x) - \big(\hatFij(y) - F(y)\big)|\\
&\leq -f(q_\beta)b_{n,r} + \sup_{x \in \R}f'(x)\frac{b_{n,r}^2}{2} + \max_{\substack{1 \leq i < j \leq n \\ |j - i| > n^r }} | \hatFij(q_\beta ) - \beta|\\
&\;\;+  \max_{\substack{1 \leq i < j \leq n \\ |j - i| \geq n^r }}\sup_{|x-y|\leq b_{n,r}} |\hatFij(x) - F(x) - \big(\hatFij(y) - F(y)\big)|
\end{align*}
This leads to
\begin{align*}
&\Pb\bigg( \max_{\substack{1 \leq i < j \leq n \\ |j - i| \geq n^r }}  \hatFij(q_\beta - b_{n,r}) - \beta \geq 0 \;\;\text{i.o.}\bigg)\\
&\leq \Pb\bigg( -f(q_\beta)b_{n,r} + \sup_{x \in \R}f'(x)\frac{b_{n,r}^2}{2} + \max_{\substack{1 \leq i < j \leq n \\ |j - i| \geq n^r }} | \hatFij(q_\beta ) - \beta|\\
&\hspace{5cm}+  \max_{\substack{1 \leq i < j \leq n \\ |j - i| \geq n^r }}\sup_{|x-y|\leq b_{n,r}} |\hatFij(x) - F(x) - \big(\hatFij(y) - F(y)\big)| \geq 0 \;\;\text{i.o.}\bigg)\\
&\leq \Pb\bigg( \max_{\substack{1 \leq i < j \leq n \\ |j - i| \geq n^r }} | \hatFij(q_\beta ) - \beta| \geq  f(q_\beta)b_{n,r} - \sup_{x \in \R}f'(x)\frac{b_{n,r}^2}{2} \;\;\text{i.o.}\bigg)\\
&\;\;+\Pb\bigg( \max_{\substack{1 \leq i < j \leq n \\ |j - i| \geq n^r }}\sup_{|x-y|\leq b_{n,r}} |\hatFij(x) - F(x) - \big(\hatFij(y) - F(y)\big)| \geq  f(q_\beta)b_{n,r} - \sup_{x \in \R}f'(x)\frac{b_{n,r}^2}{2} \;\;\text{i.o.}\bigg)~.
\end{align*}
Using again the definition of $b_{n,r}$ and Remark \ref{rem:BorelCantelli} the two probabilities are zero, which finishes the proof of Lemma \ref{lem:quantas}.
\end{proof}

\begin{remark}\label{rem:quantlarge}
Note that Lemma \ref{lem:quantas} in particular implies that (for all $0 < r <1$) and $c_0<1$
\begin{align*}
n^{c_0r/2} \max_{ \substack{1 \leq i < j \leq n \\ |j - i| \geq n^r }} \big| \hatqijbeta - q_\beta \big| = \op(1)~,
\end{align*}
which we require later on.
\end{remark}

\begin{lemma}\label{lem:quantlem}
Under the assumptions of Theorem \ref{thm:quantremain} it holds for all $0<r<1/2-1/\lambda$
\begin{align}\label{eq:quantlem}
\dfrac{1}{\sqrt{n}} \max_{ \substack{1 \leq i < j \leq n \\ |j - i| < n^r }} (j-i+1) \big| \hatqijbeta - q_\beta \big| = \op(1)~.
\end{align}
\end{lemma}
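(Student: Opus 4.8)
The plan is to exploit the fact that the empirical $\beta$-quantile $\hatqijbeta$ of any subsample is itself one of the observations $X_i, \ldots, X_j$, and to absorb the (possibly large) deviation $|\hatqijbeta - q_\beta|$ against the small window length $j-i+1 \le n^r + 1$ together with the polynomial tail bound in \eqref{ineq:tail}.

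First I would record a crude pointwise bound on the empirical quantile. Since $\beta \in (0,1)$ and $\hatFij$ jumps only at the points $X_i,\dots,X_j$, the infimum defining $\hatqijbeta = (\hatFij)^{-}(\beta)$ is attained at one of these observations, so $|\hatqijbeta| \le \max_{i \le t \le j}|X_t| \le \max_{1\le t\le n}|X_t|$. Consequently, for every pair $(i,j)$ with $|j-i| < n^r$ (so $j-i+1 \le n^r+1$),
\begin{align*}
(j-i+1)\,|\hatqijbeta - q_\beta| \le (n^r+1)\Big(|q_\beta| + \max_{1\le t\le n}|X_t|\Big)~.
\end{align*}

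Next I would control the sample maximum by a union bound and the tail assumption: for any $y>0$,
\begin{align*}
\Pb\Big(\max_{1\le t\le n}|X_t| > y\Big) \le n\,\Pb\big(|X_1| > y\big) \lesssim n\,y^{-\lambda}~,
\end{align*}
so choosing $y = n^{1/\lambda + \varepsilon}$ for small $\varepsilon>0$ gives $\Pb\big(\max_{1\le t\le n}|X_t| > n^{1/\lambda+\varepsilon}\big) \lesssim n^{-\lambda\varepsilon}\to 0$, i.e. $\max_{1\le t\le n}|X_t| = \op(n^{1/\lambda+\varepsilon})$. Combining the two displays,
\begin{align*}
\dfrac{1}{\sqrt n}\max_{\substack{1 \le i < j \le n \\ |j-i| < n^r}} (j-i+1)\,|\hatqijbeta - q_\beta|
&\le \dfrac{n^r+1}{\sqrt n}\Big(|q_\beta| + \max_{1\le t\le n}|X_t|\Big)\\
&= \op\big(n^{\,r - 1/2 + 1/\lambda + \varepsilon}\big)~,
\end{align*}
and since $r < 1/2 - 1/\lambda$ is a strict inequality, one may fix $\varepsilon>0$ small enough that $r - 1/2 + 1/\lambda + \varepsilon < 0$, which yields the claimed $\op(1)$ bound.

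I do not expect a genuine obstacle here: the argument is elementary once the empirical quantile is identified as one of the observations, and the remaining work is a routine maximal inequality for heavy-tailed i.i.d.\ variables. The only point requiring a little care is the bookkeeping of exponents — the strict inequality $r < 1/2 - 1/\lambda$ is needed precisely to accommodate the arbitrarily small loss $n^{\varepsilon}$ incurred by the crude tail bound on the maximum (alternatively one could estimate $\E\big[\max_{1\le t\le n}|X_t|^{\lambda'}\big]$ for some $\lambda' < \lambda$ and dispense with the $\varepsilon$, but this refinement is not necessary).
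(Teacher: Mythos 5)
Your proof is correct and follows essentially the same route as the paper: bound the empirical quantile of any small window by the sample extremes, absorb the window length $n^r$ against $n^{-1/2}$, and kill the resulting term using the tail condition via $r<1/2-1/\lambda$. The only cosmetic difference is that you use a union bound with an intermediate rate $n^{1/\lambda+\varepsilon}$, whereas the paper evaluates $\Pb\big(n^{r-1/2}\max_t|X_t|>\varepsilon\big)$ directly through the i.i.d.\ product formula; both reduce to the same exponent bookkeeping.
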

\begin{proof}
Due to $\min_{t=1}^n X_t \leq \hatqijbeta \leq \max_{t=1}^n X_t$ for all $i,j \in \{1,\dots,n\}$, we observe that the term on the left-hand side of \eqref{eq:quantlem} is bounded by
\begin{align*}
n^{r-1/2} \Big( |\max_{t=1}^n X_t - q_\beta| + |\min_{t=1}^n X_t - q_\beta|\Big)
\leq 2 n^{r-1/2}\max_{t=1}^n | X_t| +  o(1)~.
\end{align*}
Now for $\varepsilon>0$ we can employ the independence and obtain 
\begin{align*}
\Pb \Big( n^{r-1/2} \max_{i=1}^n |X_t| > \varepsilon \Big)
= 1 - \Pb \Big( n^{r-1/2} \max_{i=1}^n |X_t| \leq \varepsilon \Big)
&= 1 - \Pb \Big( |X_1| \leq n^{1/2-r}\varepsilon \Big)^n\\
&= 1 - \bigg( 1- \Pb \Big( |X_1| > n^{1/2-r}\varepsilon \Big) \bigg)^n~.
\end{align*}
By the assumption \eqref{ineq:tail} on the tails of the distribution of $|X_1|$ this is now bounded by
\begin{align*}
1 - \Big(1 - \varepsilon^{-\lambda} n^{-\lambda(1/2-r)}  \Big)^n = o(1)~,
\end{align*}
where we used that by assumption $\lambda(1/2-r) > 1$.
\end{proof}

\begin{lemma}\label{lem:combined}
Under the assumptions of Theorem \ref{thm:quantremain} it holds for all $2/9<r<1$
\begin{align*}
\dfrac{1}{\sqrt{n}} \max_{\substack{1 \leq i < j \leq n \\ |j - i| > n^r }} (j-i+1) \Big|\hatFij(\hatqijbeta) - \hatFij(q_\beta) - (F(\hatqijbeta) - F(q_\beta))\Big|
= o_p(1)~.
\end{align*}
\end{lemma}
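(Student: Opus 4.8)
The plan is to reduce the statement to a uniform oscillation (modulus-of-continuity) estimate for the centered empirical processes $\hatFij-F$, and then to absorb the factor $(j-i+1)$ — which can be of order $n$ — by stratifying the admissible block lengths into a bounded number of geometric scales. First I would invoke Lemma \ref{lem:quantas} (more precisely, the Borel--Cantelli argument underlying it, cf.\ Remark \ref{rem:BorelCantelli}): almost surely, for all sufficiently large $n$,
\begin{align*}
\max_{\substack{1\le i<j\le n\\ |j-i|\ge n^r}}\big|\hatqijbeta - q_\beta\big| \le b_{n,r},
\end{align*}
with $b_{n,r}$ defined in \eqref{eq:bn}. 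On this event, writing $G_{i,j}:=\hatFij - F$, the quantity inside the maximum equals $|G_{i,j}(\hatqijbeta) - G_{i,j}(q_\beta)|$ and hence is at most $\sup_{|x-y|\le b_{n,r}}|G_{i,j}(x)-G_{i,j}(y)|$, so it suffices to prove
\begin{align*}
\frac{1}{\sqrt n}\max_{\substack{1\le i<j\le n\\ |j-i|> n^r}}(j-i+1)\,\sup_{|x-y|\le b_{n,r}}\big|G_{i,j}(x)-G_{i,j}(y)\big| \longrightarrow 0 \quad\text{a.s.}
\end{align*}

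Next I would fix exponents $r=\rho_0<\rho_1<\rho_2<\rho_3=1$ and split the range of block lengths $\ell=j-i+1\in(n^r,n]$ into the three windows $\ell\in(n^{\rho_{k-1}},n^{\rho_k}]$, $k=1,2,3$. On the $k$-th window every block has length at least $n^{\rho_{k-1}}$, and I would apply Lemma \ref{lem:FFxy} with the threshold $n^{\rho_{k-1}}$ in place of $n^r$ and modulus parameter $a_n=b_{n,r}$; the hypotheses \eqref{eq:assumpan} are satisfied because $b_{n,r}\to 0$ and $\rho_{k-1}\log n/(n^{\rho_{k-1}}b_{n,r})=O\big(\sqrt{\log n}\,n^{-(\rho_{k-1}-r/2)}\big)=o(1)$ since $\rho_{k-1}\ge r>r/2$. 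Together with Borel--Cantelli this gives, almost surely and eventually in $n$,
\begin{align*}
\max_{\substack{1\le i<j\le n\\ |j-i|\ge n^{\rho_{k-1}}}}\ \sup_{|x-y|\le b_{n,r}}\big|G_{i,j}(x)-G_{i,j}(y)\big|\ \lesssim\ \sqrt{\frac{b_{n,r}\log n}{n^{\rho_{k-1}}}}.
\end{align*}
Since $b_{n,r}\log n=O\big((\log n)^{3/2}n^{-r/2}\big)$, the contribution of the $k$-th window to the supremum on the left of the previous display is of order
\begin{align*}
n^{\rho_k-1/2}\cdot(\log n)^{3/4}\,n^{-r/4-\rho_{k-1}/2}=(\log n)^{3/4}\,n^{\rho_k-\rho_{k-1}/2-1/2-r/4},
\end{align*}
and, there being only three windows, the argument is complete as soon as each exponent $\rho_k-\rho_{k-1}/2-1/2-r/4$ is strictly negative.

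The heart of the matter is this exponent bookkeeping, which is where I expect the only real difficulty. The coarsest window is the delicate one: there $\ell$ can be as large as $n$, so its prefactor $\ell/\sqrt n$ is of order $\sqrt n$, whereas the available oscillation bound decays only like $n^{-\rho_2/2}$; pushing $\rho_2$ up cures this but inflates the prefactors $n^{\rho_1-1/2}$ and $n^{\rho_2-1/2}$ of the two finer windows. Writing out the three negativity conditions yields $\rho_1<\tfrac12+\tfrac{3r}{4}$, $\rho_2<\tfrac12+\tfrac r4+\tfrac{\rho_1}{2}$ and $\rho_2>1-\tfrac r2$ (together with $\rho_1>r$), and a simultaneous choice of $\rho_1,\rho_2$ exists precisely when $r>2/9$, which is exactly the lower bound appearing in the statement; an almost sure limit being zero then in particular yields the asserted $\op(1)$ conclusion. (A full $O(\log n)$-fold dyadic decomposition would in fact deliver the conclusion for every $r\in(0,1)$, but the three-scale version is all that is needed, since the complementary restriction $r<1/2-1/\lambda$ used in Lemma \ref{lem:quantlem} leaves room above $2/9$ thanks to $\lambda>18/5$.)
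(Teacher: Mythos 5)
Your argument is correct and arrives at the same constraint $r>2/9$, but the decomposition is organized differently from the paper's. The paper splits the block lengths at a single intermediate exponent $\delta$ with $2/3<\delta<\tfrac34 r+\tfrac12$ (two windows only) and compensates the large prefactor of the coarse window by using a \emph{scale-adapted} quantile localization: via Remark \ref{rem:quantlarge} the modulus in Lemma \ref{lem:FFxy} is taken as $a_{n,r}=n^{-c_0r/2}$ for blocks of length at least $n^{r}$ but as the sharper $a_{n,\delta}=n^{-c_0\delta/2}$ for blocks of length at least $n^{\delta}$, so longer blocks enter with both a finer oscillation scale and a finer localization radius; the two negativity conditions $\delta-\tfrac12<\tfrac34 r$ and $\tfrac12<\tfrac34\delta$ then force $r>2/9$, and the whole argument is run in probability for fixed $\varepsilon$. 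You instead keep a single localization radius $b_{n,r}$ (the full almost-sure strength of Lemma \ref{lem:quantas}) for all blocks and recover the missing decay by a three-scale stratification $r=\rho_0<\rho_1<\rho_2<\rho_3=1$, gaining only through the $n^{-\rho_{k-1}/2}$ factor in Lemma \ref{lem:FFxy}; your exponent bookkeeping and the verification of \eqref{eq:assumpan} are correct, and the feasibility of $(\rho_1,\rho_2)$ indeed characterizes $r>2/9$. The trade-off is essentially aesthetic: the paper's version needs the block-length-dependent localization but only two windows, while yours needs one more window but only the crudest localization; your closing remark that a finer (boundedly many scales of width less than $r/2$, or even $\log n$-fold with a harmless union bound) stratification would remove the lower bound on $r$ altogether is also accurate, though not needed given the restriction $r<1/2-1/\lambda$ used in Lemma \ref{lem:quantlem}.
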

\begin{proof}
Fix $\varepsilon >0$ and choose $\delta$ such that $2/3 < \delta < \tfrac{3}{4}r + 1/2$ , then it holds that
\begin{align}
&\Pb\bigg( \dfrac{1}{\sqrt{n}} \max_{\substack{1 \leq i < j \leq n \\ |j - i| \geq n^r }}  (j-i+1) \Big|\hatFij(\hatqijbeta) - \hatFij(q_\beta) - \big(F(\hatqijbeta) - F(q_\beta)\big)\Big|>\varepsilon \bigg)\nonumber\\
&\leq \Pb\bigg( \dfrac{1}{\sqrt{n}} \max_{\substack{1 \leq i < j \leq n \\ n^\delta > |j - i| \geq n^r }} (j-i+1)\Big|\hatFij(\hatqijbeta) - \hatFij(q_\beta) - \big(F(\hatqijbeta) - F(q_\beta)\big)\Big|>\varepsilon \bigg)\label{ineq:combsep}\\
&\qquad\qquad\qquad+ \Pb\bigg( \dfrac{1}{\sqrt{n}} \max_{\substack{1 \leq i < j \leq n \\ |j - i| \geq n^\delta }}(j-i+1) \Big|\hatFij(\hatqijbeta) - \hatFij(q_\beta) - \big(F(\hatqijbeta) - F(q_\beta)\big)\Big|>\varepsilon \bigg)~.\nonumber
\end{align}
We will treat the two summands on the right-hand side separately.\\
\textbf{First summand of (\ref{ineq:combsep}):}
Using $\delta - 1/2 < 3/4r$, we can choose a constant $0<c_0<1$ sufficiently large, such that $\delta -1/2 < (c_0/4+1/2)r$.
Further choose $a_n = n^{-c_0r/2}$.
The first summand of \eqref{ineq:combsep} is then bounded by
\begin{align}\label{ineq:combsep2}
\begin{split}
&\Pb\bigg( \max_{\substack{1 \leq i < j \leq n \\ |j - i| \geq n^r }} n^{\delta-1/2}\Big|\hatFij(\hatqijbeta) - \hatFij(q_\beta) - \big(F(\hatqijbeta) - F(q_\beta)\big)\Big|>\varepsilon \bigg)\\
&\leq \Pb\bigg( \max_{\substack{1 \leq i < j \leq n \\ |j - i| \geq n^r }} \sup_{|x-y|\leq a_{n,r}} \Big|\hatFij(x) - F(x) - \hatFij(y) + F(y)\Big|> \dfrac{\varepsilon}{n^{\delta-1/2}}\bigg)\\
&\hspace{8cm}+ \Pb\bigg(\max_{\substack{1 \leq i < j \leq n \\ |j - i| \geq n^r }} |\hatqijbeta - q_\beta| > a_{n,r}\bigg)~.
\end{split}
\end{align}
By Remark \ref{rem:quantlarge} the second summand of the right-hand side of \eqref{ineq:combsep2} converges to zero.
For the first summand of \eqref{ineq:combsep2} note that using $\delta -1/2 < (c_0/4+1/2)r$, we obtain (provided that $n$ is sufficiently large)
\begin{align*}
\dfrac{\varepsilon}{n^{\delta - 1/2 }}
\geq C_{r,2}\dfrac{\sqrt{r\log(n)}}{n^{c_0r/4+r/2}}
= C_{r,2}\dfrac{\sqrt{a_{n,r}r\log(n)}}{n^{r/2}}
\end{align*}
 and so the first summand of \eqref{ineq:combsep2} converges to zero by Lemma \ref{lem:FFxy}.\\
\textbf{Second summand of (\ref{ineq:combsep}):}
Due to $\delta>2/3$, we can choose a constant $0<c_0<1$ sufficiently large, such that $1/2 < \delta/2 +c_0\delta/4$.
Next define $a_{n,\delta} = n^{-c_0\delta/2}$ and obtain the bound
\begin{align}\label{ineq:combsep3}
&\Pb\bigg( \dfrac{1}{\sqrt{n}} \max_{\substack{1 \leq i < j \leq n \\ |j - i| \geq n^\delta }}(j-i+1) \Big|\hatFij(\hatqijbeta) - \hatFij(q_\beta) - \big(F(\hatqijbeta) - F(q_\beta)\big)\Big|>\varepsilon \bigg)\nonumber\\
\begin{split}
&\leq\Pb\bigg( \max_{\substack{1 \leq i < j \leq n \\ |j - i| \geq n^\delta }} \sup_{|x-y|\leq a_{n,\delta}} \Big|\hatFij(x) - F(x) - \hatFij(y) + F(y)\Big|> \dfrac{\varepsilon}{n^{1/2}}\bigg)\\
&\hspace{8cm}+ \Pb\bigg(\max_{\substack{1 \leq i < j \leq n \\ |j - i| \geq n^\delta }} |\hatqijbeta - q_\beta| > a_{n,\delta}\bigg)~.
\end{split}
\end{align}
Employing again Remark \eqref{rem:quantlarge} the second summand of \eqref{ineq:combsep3} converges to zero.
For the first summand of \eqref{ineq:combsep3}, note that we have (for sufficiently large $n$)
\begin{align*}
\dfrac{\varepsilon}{n^{1/2}}
\geq C_{\delta,2} \dfrac{\sqrt{\delta\log(n)}}{n^{c_0\delta/4+\delta/2}}
= C_{\delta,2} \dfrac{\sqrt{a_{n,\delta}\delta\log(n)}}{n^{\delta/2}}
\end{align*}
and so Lemma \ref{lem:FFxy} finishes the proof.
\end{proof}

\noindent Now we are able to proceed to the actual proof of Theorem \ref{thm:quantremain}.\\[10pt]
\textit{Proof of Theorem \ref{thm:quantremain}:}\\
Since $\beta$ is fixed, it is easy to see that the claim is equivalent to 
\begin{align}\label{eq:qremain1}
\dfrac{1}{\sqrt{n}} \max_{1\leq i < j \leq n} (j-i+1) \bigg| f(q_\beta) \big(\hatqijbeta - q_\beta\big) - \beta + \hatFij(q_\beta) \bigg| = \op(1)~.
\end{align}
Further note that (since $F$ is continuous) $|\hatFij(\hatqijbeta) - \beta | \leq (j-i+1)^{-1}$ almost surely, which yields
\begin{align*}
\dfrac{1}{\sqrt{n}} \max_{1\leq i < j \leq n} (j-i+1) |\hatFij(\hatqijbeta) - \beta |
= \op(1)
\end{align*} 
and so it remains to prove
\begin{align}\label{eq:qremain2}
\dfrac{1}{\sqrt{n}} \max_{1\leq i < j \leq n} (j-i+1) \bigg| f(q_\beta) \big(\hatqijbeta - q_\beta\big) - \hatFij(\hatqijbeta) + \hatFij(q_\beta) \bigg|
= \op(1)~.
\end{align}
Now due to $\lambda > 18/5$, we can choose $r_1$ with $2/9 < r_1 < 1/2 - 1/ \lambda<1/2$.
By Lemma \ref{lem:quantlem} and $\hatFij(\hatqijbeta), \hatFij(q_\beta) \in [0,1]$ we only have to verify
\begin{align*}
\dfrac{1}{\sqrt{n}} \max_{ \substack{1 \leq i < j \leq n \\ |j - i| \geq n^{r_1} }}
(j-i+1) \bigg| f(q_\beta) \big(\hatqijbeta - q_\beta\big) - \hatFij(\hatqijbeta) + \hatFij(q_\beta) \bigg| = \op(1)~.
\end{align*}
Now employing Lemma \ref{lem:combined}, the statement above follows if we can establish
\begin{align}\label{eq:2last}
\dfrac{1}{\sqrt{n}} \max_{ \substack{1 \leq i < j \leq n \\ |j - i| \geq n^{r_1} }}
(j-i+1) \bigg| f(q_\beta) \big(\hatqijbeta - q_\beta\big) - F(\hatqijbeta) + F(q_\beta) \bigg| = \op(1)~.
\end{align}
By means of a Taylor expansion the term on the left-hand side is (up to a constant almost surely) bounded by
\begin{align*}
\dfrac{1}{\sqrt{n}} &\max_{ \substack{1 \leq i < j \leq n \\ |j - i| \geq n^{r_1} }}(j-i+1)\sup_{x \in \R}|f'(x)|\big(\hatqijbeta - q_\beta\big)^2~,
\end{align*}
where the factor $\sup_{x \in \R} |f'(x)|$ is bounded by assumption.
Now since $2/9 < r_1 < 1/2$, it is easy to see, that we can choose $0<c_0<1$ (sufficiently large), such that
\begin{align*}
\dfrac{1}{2c_0} \leq r_1c_0 + 1/2~.
\end{align*}
Thus we can select $\delta$ that fulfills
\begin{align*}
r_1 < \dfrac{1}{2c_0} \leq \delta \leq r_1c_0 + 1/2~.
\end{align*}
We consider the cases $n^{r_1}\leq|i-j|\leq n^{\delta}$ and $n^{\delta}\leq|i-j|$ separately.
For the first one we obtain
\begin{align*}
&\dfrac{1}{\sqrt{n}} \max_{ \substack{1 \leq i < j \leq n \\ n^{r_1} \leq |j - i| < n^{\delta} }}(j-i+1)|\big(\hatqijbeta - q_\beta\big)^2
\leq \max_{ \substack{1 \leq i < j \leq n \\ n^{r_1} \leq |j - i| < n^{\delta }}}n^{\delta-1/2}\big(\hatqijbeta - q_\beta\big)^2\\
&\leq \max_{ \substack{1 \leq i < j \leq n \\ n^{r_1} \leq |j - i| < n}} n^{c_0r_1}\big(\hatqijbeta - q_\beta\big)^2
= \max_{ \substack{1 \leq i < j \leq n \\ n^{r_1} \leq |j - i| < n}} \big(n^{c_0r_1/2}(\hatqijbeta - q_\beta)\big)^2
= \op(1)~,
\end{align*}
where we used Remark \ref{rem:quantlarge} for the last estimate.
For the other case we obtain similarly
\begin{align*}
\dfrac{1}{\sqrt{n}} \max_{ \substack{1 \leq i < j \leq n \\ n^{\delta} \leq |j - i| < n }}(j-i+1)|\big(\hatqijbeta - q_\beta\big)^2
&\leq \max_{ \substack{1 \leq i < j \leq n \\ n^{\delta} \leq |j - i| < n}}n^{1/2}\big(\hatqijbeta - q_\beta\big)^2\\
\leq \max_{ \substack{1 \leq i < j \leq n \\ n^{\delta} \leq |j - i| < n}} n^{c_0\delta}\big(\hatqijbeta - q_\beta\big)^2
&= \max_{ \substack{1 \leq i < j \leq n \\ n^{\delta} \leq |j - i| < n}} \big(n^{c_0\delta/2}(\hatqijbeta - q_\beta)\big)^2
= \op(1)~,
\end{align*}
where we again employed Remark \ref{rem:quantlarge}.
\end{proof}

\section{Additional simulation results} \label{addsim}
In this section we provide some additional simulation results to allow a more detailed analysis of the presented detection schemes.
We will focus on changes in the mean as presented in Section \ref{sec51} and study the following aspects:
\begin{enumerate}[{Section B.}1:]
\item The influence of the actual change point locations on the power.
\item Other choices of the factor $T$, that controls the monitoring window length.
\end{enumerate}

\subsection{Influence of change point locations}
In this section we report simulation results for the situation considered in Figure \ref{fig:3} except for the change point locations, for which we consider rather early and late locations.
Figure \ref{fig:earlychange} displays the power of the non self-normalized procedures for the different choices of the model and the threshold considered in Section \ref{sec51}, where the change occurs already at observation $X_{120}$ and a historical training data ending at $X_{100}$.
This can be considered as a situation of an early change and the displayed plots can be explained as follows.
In all combinations, the detection scheme based on $\hat{D}$ still has a slightly larger power compared to the methods based on $\hat{P}$ and $\hat{Q}$, while $\hat{P}$ slightly outperforms $\hat{Q}$.
Compared to Figure \ref{fig:3} the differences with respect to the different schemes are considerably smaller.
These observations may be explained by the different constructions of the detection schemes, that are described at the end of Section \ref{sec2}.
In particular the performance of the monitoring schemes based on $\hat{Q}$ and $\hat{P}$ improves if the change occurs closer to the monitoring start, see also the discussion at the end of Section \ref{sec2}.

\noindent In Figure \ref{fig:latechange} we report the power for a change located close to the end of the monitoring period.
Here the break occurs at observation $X_{180}$, while the monitoring window ends with observation $X_{200}$.
Concerning the small number of 20 observations after the change, such an event is certainly harder to detect.
Consequently, all schemes perform inferior compared to the situations considered in Figure \ref{fig:3} and \ref{fig:earlychange}.
However, the power superiority of the methods based on the statistics $\hat{D}$ over $\hat{P}$ and $\hat{Q}$ is even more significant now.
These results support our initial conjecture: While all schemes behave more or less equivalent for changes close to the start, $\hat{D}$ offers better characteristics, if changes are located closer to the end.

\begin{figure}[H]
\begin{tabular}{b{1cm}ccc}
& (T1) & (T2) & (T3)\\
(M1) \tabj &
\includegraphics[width=4.5cm,height=3.5cm]{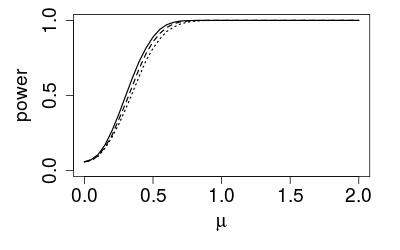} &
\includegraphics[width=4.5cm,height=3.5cm]{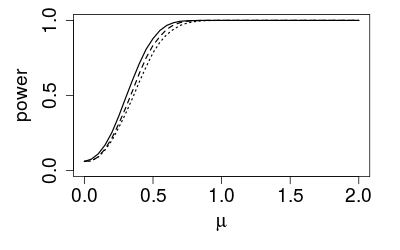} &
\includegraphics[width=4.5cm,height=3.5cm]{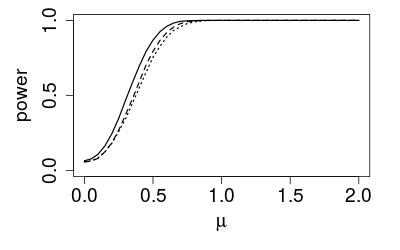} \vspace{-0.3cm}
\\
(M2) \tabj &
\includegraphics[width=4.5cm,height=3.5cm]{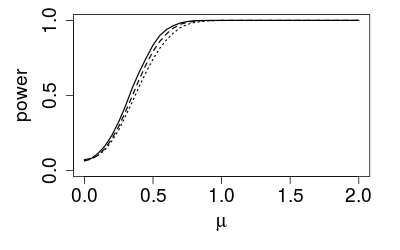} &
\includegraphics[width=4.5cm,height=3.5cm]{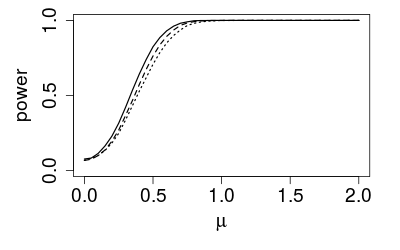} &
\includegraphics[width=4.5cm,height=3.5cm]{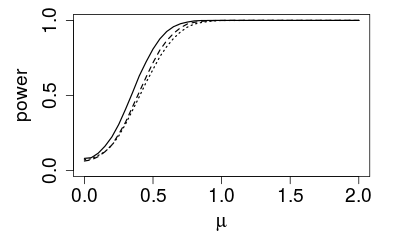} \vspace{-0.3cm}
\\
(M3) \tabj &
\includegraphics[width=4.5cm,height=3.5cm]{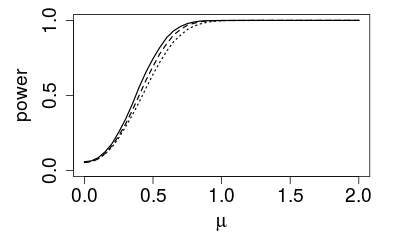} &
\includegraphics[width=4.5cm,height=3.5cm]{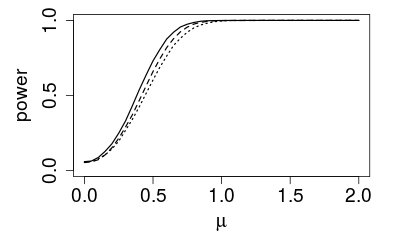} &
\includegraphics[width=4.5cm,height=3.5cm]{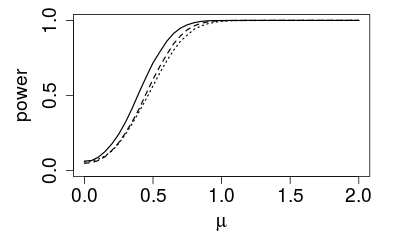} \vspace{-0.3cm}
\\
(M4) \tabj &
\includegraphics[width=4.5cm,height=3.5cm]{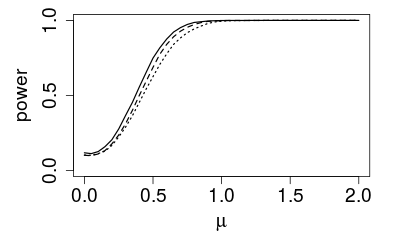} &
\includegraphics[width=4.5cm,height=3.5cm]{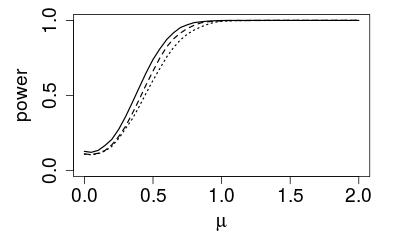} &
\includegraphics[width=4.5cm,height=3.5cm]{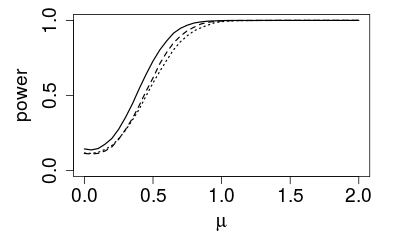} \vspace{-0.3cm}
\end{tabular}
\caption{\it Empirical rejection probabilities of the sequential tests for a change in the mean based on the statistics $\hat{D}$ (solid line), $\hat{P}$ (dashed line) , $\hat{Q}$ (dotted line).
The initial and total sample size are $m=100$ and $m(T+1)=200$, respectively, and the change occurs at observation $120$.
The level is $\alpha=0.05$.
Different rows correspond to different threshold functions, while different columns correspond to different models.\label{fig:earlychange}}
\end{figure}

\begin{figure}[H]
\begin{tabular}{b{1cm}ccc}
& (T1) & (T2) & (T3)\\
(M1) \tabj &
\includegraphics[width=4.5cm,height=3.5cm]{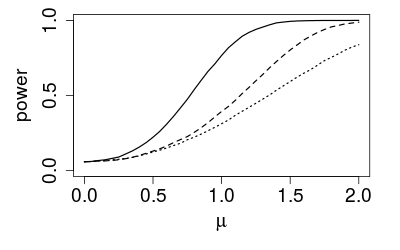} &
\includegraphics[width=4.5cm,height=3.5cm]{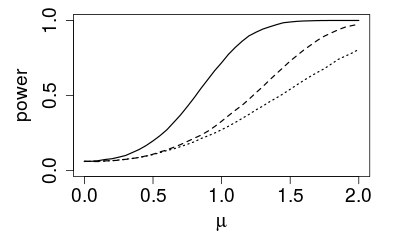} &
\includegraphics[width=4.5cm,height=3.5cm]{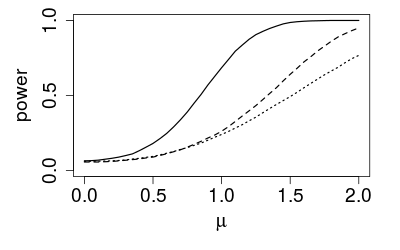} \vspace{-0.3cm}
\\
(M2) \tabj &
\includegraphics[width=4.5cm,height=3.5cm]{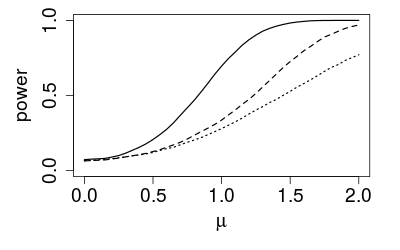} &
\includegraphics[width=4.5cm,height=3.5cm]{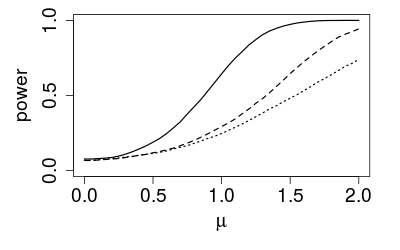} 
&
\includegraphics[width=4.5cm,height=3.5cm]{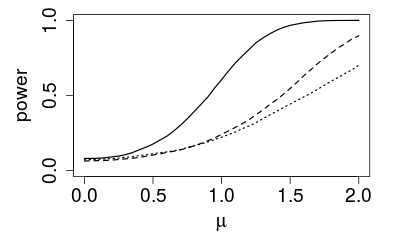} \vspace{-0.3cm}
\\
(M3) \tabj &
\includegraphics[width=4.5cm,height=3.5cm]{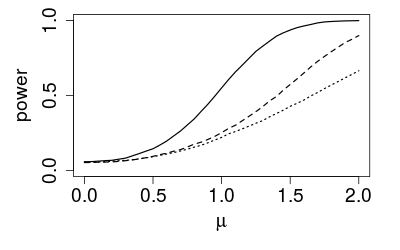} &
\includegraphics[width=4.5cm,height=3.5cm]{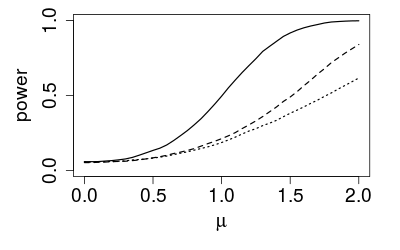} &
\includegraphics[width=4.5cm,height=3.5cm]{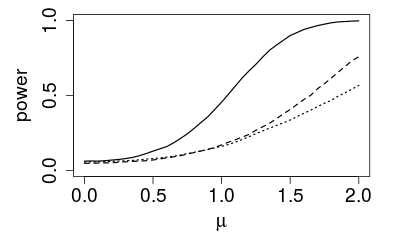} \vspace{-0.3cm}
\\
(M4) \tabj &
\includegraphics[width=4.5cm,height=3.5cm]{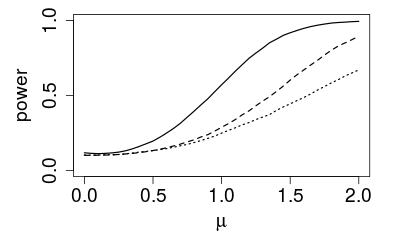} &
\includegraphics[width=4.5cm,height=3.5cm]{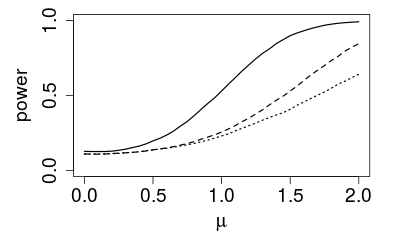} &
\includegraphics[width=4.5cm,height=3.5cm]{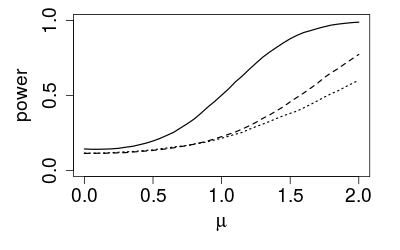} \vspace{-0.3cm}
\end{tabular}
\caption{\it Empirical rejection probabilities of the sequential tests for a change in the mean based on the statistics $\hat{D}$ (solid line), $\hat{P}$ (dashed line), $\hat{Q}$ (dotted line).
The initial and total sample size are $m=100$ and $m(T+1)=200$, respectively, and the change occurs at observation $180$.
The level is $\alpha=0.05$.
Different rows correspond to different models, while different columns correspond to different threshold functions.\label{fig:latechange}}
\end{figure}

\subsection{Larger monitoring windows}
In this section we report simulations with the same settings as in Figure \ref{fig:3} but with a larger monitoring window.
More precisely, we operate again with a set of $m=100$ stable observations, while the factor $T$ is set to $2$ and $3$ for the simulations in Figure \ref{fig:T2} and \ref{fig:T3}, respectively.
The change point is again located at the middle of the monitoring period.
The obtained results are similar to those for the case $T=1$ given in Section \ref{sec51} and for this reason we omit a detailed discussion here. 

\begin{figure}[H]
\begin{tabular}{b{1cm}ccc}
& (T1) & (T2) & (T3)\\
(M1) \tabj &
\includegraphics[width=4.5cm,height=3.5cm]{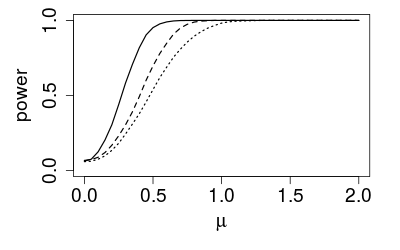} &
\includegraphics[width=4.5cm,height=3.5cm]{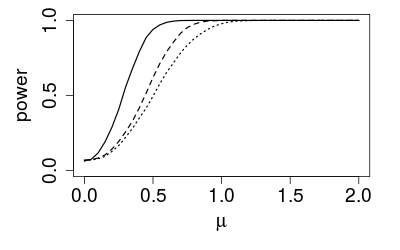} &
\includegraphics[width=4.5cm,height=3.5cm]{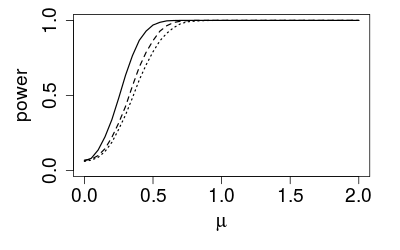} \vspace{-0.3cm}
\\
(M2) \tabj &
\includegraphics[width=4.5cm,height=3.5cm]{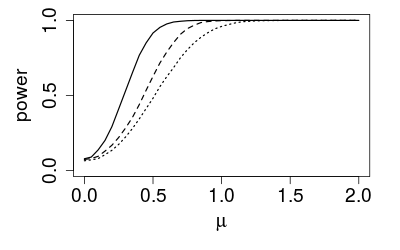} &
\includegraphics[width=4.5cm,height=3.5cm]{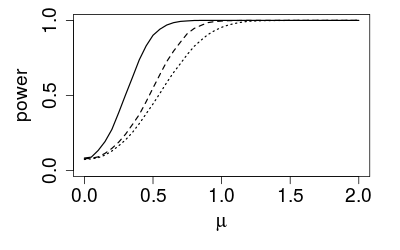} &
\includegraphics[width=4.5cm,height=3.5cm]{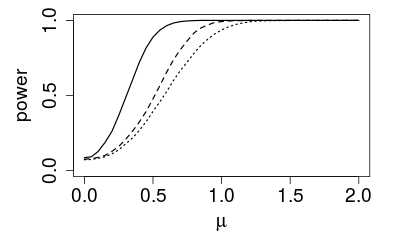} \vspace{-0.3cm}
\\
(M3) \tabj &
\includegraphics[width=4.5cm,height=3.5cm]{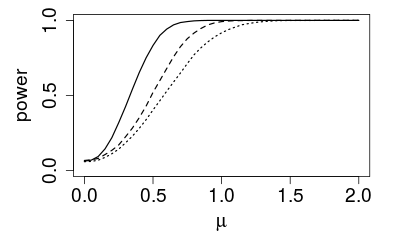} &
\includegraphics[width=4.5cm,height=3.5cm]{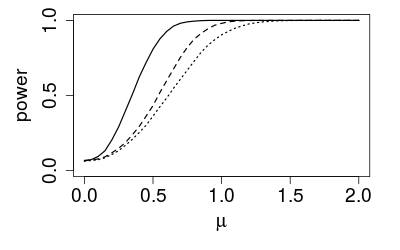} &
\includegraphics[width=4.5cm,height=3.5cm]{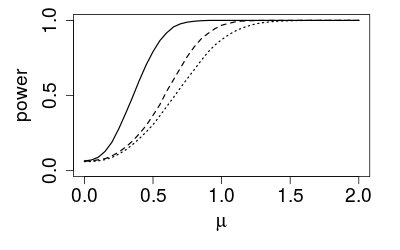} \vspace{-0.3cm}
\\
(M4) \tabj &
\includegraphics[width=4.5cm,height=3.5cm]{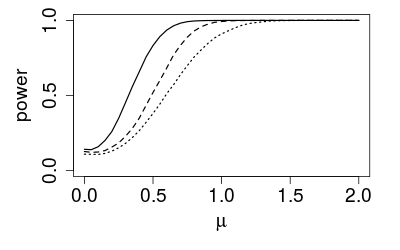} &
\includegraphics[width=4.5cm,height=3.5cm]{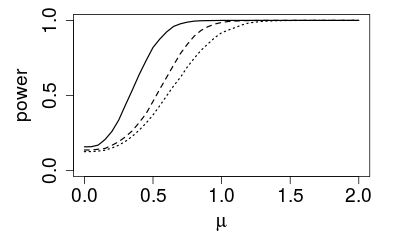} &
\includegraphics[width=4.5cm,height=3.5cm]{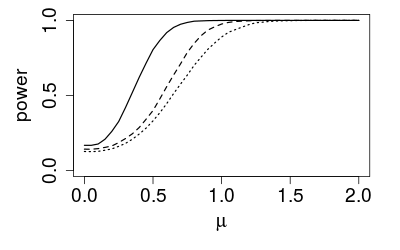} \vspace{-0.3cm}
\end{tabular}
\caption{\it Empirical rejection probabilities of the sequential tests for a change in the mean based on the statistics $\hat{D}$ (solid line), $\hat{P}$ (dashed line), $\hat{Q}$ (dotted line).
The initial and total sample size are $m=100$ and $m(T+1)=300$, respectively, and the change occurs at observation $200$.
The level is $\alpha=0.05$.
Different rows correspond to different models, while different columns correspond to different threshold functions.\label{fig:T2}}
\end{figure}

\begin{figure}[H]
\begin{tabular}{b{1cm}ccc}
& (T1) & (T2) & (T3)\\
(M1) \tabj &
\includegraphics[width=4.5cm,height=3.5cm]{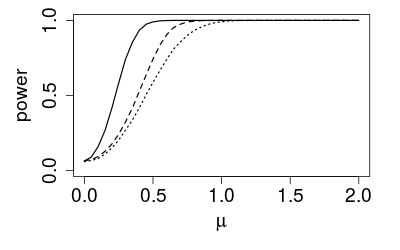} &
\includegraphics[width=4.5cm,height=3.5cm]{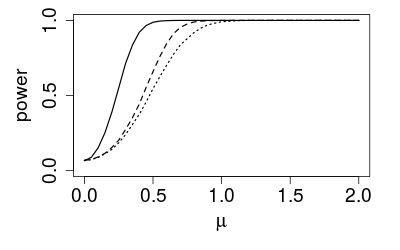} &
\includegraphics[width=4.5cm,height=3.5cm]{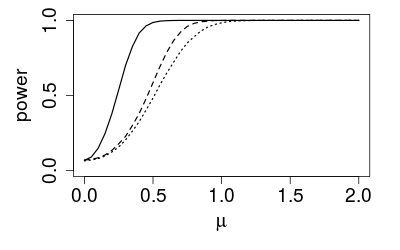} \vspace{-0.3cm}
\\
(M2) \tabj &
\includegraphics[width=4.5cm,height=3.5cm]{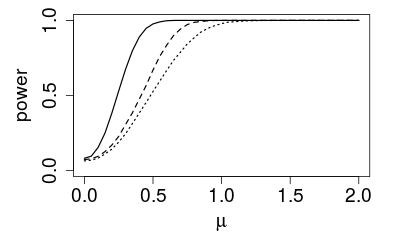} &
\includegraphics[width=4.5cm,height=3.5cm]{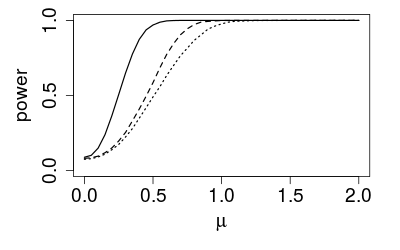} &
\includegraphics[width=4.5cm,height=3.5cm]{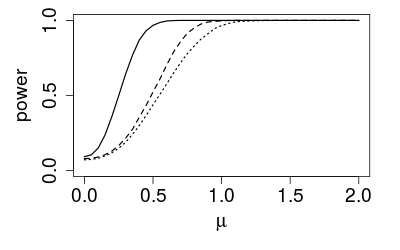} \vspace{-0.3cm}
\\
(M3) \tabj &
\includegraphics[width=4.5cm,height=3.5cm]{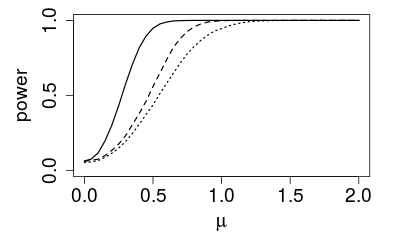} &
\includegraphics[width=4.5cm,height=3.5cm]{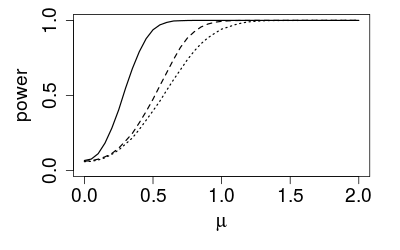} &
\includegraphics[width=4.5cm,height=3.5cm]{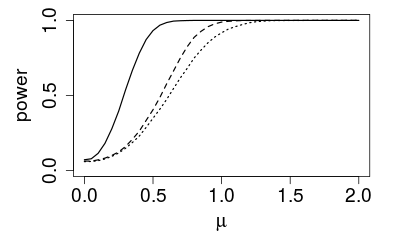} \vspace{-0.3cm}
\\
(M4) \tabj &
\includegraphics[width=4.5cm,height=3.5cm]{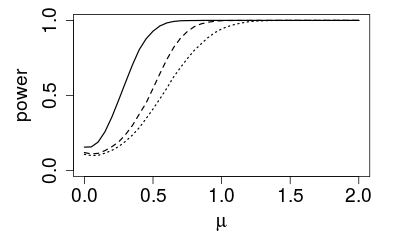} &
\includegraphics[width=4.5cm,height=3.5cm]{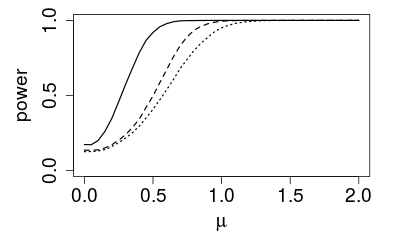} &
\includegraphics[width=4.5cm,height=3.5cm]{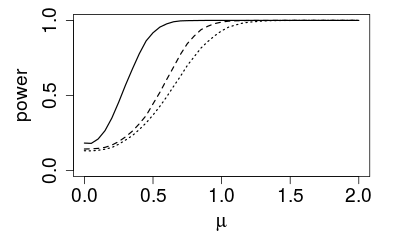} \vspace{-0.3cm}
\end{tabular}
\caption{\it Empirical rejection probabilities of the sequential tests for a change in the mean based on the statistics $\hat{D}$ (solid line), $\hat{P}$ (dashed line) , $\hat{Q}$ (dotted line).
The initial and total sample size are $m=100$ and $m(T+1)=400$, respectively, and the change occurs at observation $250$.
The level is $\alpha=0.05$.
Different rows correspond to different models, while different columns correspond to different threshold functions.\label{fig:T3}}
\end{figure}
\bibliographystylesuppl{apalike}
\bibliographysuppl{literature}

\end{document}